\renewcommand{\(}{\left\(}
\renewcommand{\)}{\right\)}
\renewcommand{\[}{\left\[}
\renewcommand{\]}{\right\]}
\numberwithin{equation}{section}
\theoremstyle{plain}
\newtheorem{theorem}{Theorem}[section]
\newtheorem{lemma}[theorem]{Lemma}
\newtheorem{remark}[theorem]{Remark}
\newtheorem{definition}[theorem]{Definition}
\newtheorem{problem}[theorem]{Problem}
\newtheorem{corollary}[theorem]{Corollary}
\newtheorem{proposition}[theorem]{Proposition}
\def\proof{\@ifnextchar[{\@oproof}{\@nproof}}
\def\@oproof[#1][#2]{\trivlist\item[\hskip\labelsep\textit{#2 Proof of\
		#1.}~]\ignorespaces}
\def\@nproof{\trivlist\item[\hskip\labelsep\textit{Proof.}~]\ignorespaces}
\begin{document}
	
	\title[GARGI MUKHERJEE]{Asymptotic Growth of $(-1)^{r} {\Delta}^r \log \sqrt[n]{\overline{p}(n)/n^{\alpha}}$ and the Reverse Higher Order Tur\'an Inequalities for $\sqrt[n]{\overline{p}(n)/n^{\alpha}}$} 
	%{\Large }
	
	\author{Gargi Mukherjee}
	\address{School of Mathematical Sciences,
		National Institute of Science Education and Research, Bhubaneswar, An OCC of Homi Bhabha National Institute,  Via- Jatni, Khurda, Odisha- 752050, India}
	\email{gargi@niser.ac.in}
	\maketitle
	
	\begin{abstract}
		Let $\overline{p}(n)$ denote the overpartition function. In this paper, we study the asymptotic growth of finite difference of logarithm of $\sqrt[n]{\overline{p}(n)/n^{\alpha}}$ for $\alpha$ being a non-negative real number, namely $(-1)^{r}\Delta^r \log \sqrt[n]{\overline{p}(n)/n^{\alpha}}$ by presenting an inequality of it with a symmetric upper and lower bound. Consequently, we arrive at log-convexity of $\sqrt[n]{\overline{p}(n)}$ and $\sqrt[n]{\overline{p}(n)/n}$, previously studied by the author. The another main objective of this paper is to introduce the notion of the reverse higher order Tur\'{a}n inequalities and we prove this for $\sqrt[n]{\overline{p}(n)/n^{\alpha}}$, which not only generalize the study of Sun, Chen, and Zheng but also depicts the non real-rootedness of the Jensen polynomial associated with the sequence mentioned before.%present an inequality for $(-1)^{r} {\Delta}^r \log \sqrt[n]{\overline{p}(n)/n^{\alpha}}$ of the following form $$\log \Bigl(1+\dfrac{C(r)}{n^{r+1/2}}-\dfrac{C(r,\alpha)}{n^{r+3/4}}\Bigr)<(-1)^{r}\Delta^r \log \sqrt[n]{\overline{p}(n)/n^{\alpha}} <\log \Bigl(1+\dfrac{C(r)}{n^{r+1/2}}\Bigr)\ \text{for}\ n \geq N(r, \alpha),$$
		%where $\alpha$ is a non-negative real number, $\Delta$ is the difference operator with respect to $n$, $C(r)$ is a constant depending on $r$ and $C(r,\alpha), \text{and}\ N(r,\alpha)$ are constants depending on $r$ and $\alpha$. This immediately establishes the asymptotic growth of $(-1)^{r}\Delta^r \log \sqrt[n]{\overline{p}(n)/n^{\alpha}}$ by showing  $\underset{n \rightarrow \infty}{\lim} (-1)^r \Delta^{r} \log \  \sqrt[n]{\overline{p}(n)/n^{\alpha}} = \dfrac{C(r)}{n^{r+1/2}}$, which is one of the primary goals of this paper. Moreover, based on the positivity of $(-1)^r \Delta^{r} \log \  \sqrt[n]{\overline{p}(n)/n^{\alpha}}$ for $r=2, 3$,  we show the reverse higher order Tur\'an inequalities for $\sqrt[n]{\overline{p}(n)/n^{\alpha}}$  which is another objective of this paper.   

	\end{abstract}
	
	\hspace{0.65 cm} \textbf{2020 Mathematics Subject Classifications.}  Primary 05A20, 11N37, 11B68.\\
	\vspace{0.3 cm}
	\hspace{0.8 cm} \textbf{Keywords.} Overpartition, Finite difference, Reverse Higher
	order Tur\'{a}n inequalities.
	
	\section{Introduction}\label{intro}
		A positive sequence $\{a_n\}_{n \geq 0}$ is said to be $\log$-concave (resp. $\log$-convex) if for all $n \geq 1$, $a^2_n\geq a_{n-1}a_{n+1}$ (resp. $a^2_n\leq a_{n-1}a_{n+1}$), and it is said to be strictly $\log$-concave (resp. strictly $\log$-convex) if the inequality is strict. The binomial coefficients, the Stirling numbers, etc. (resp. the Catalan numbers, the Motzkin numbers etc.) (see \cite{Stanley, Brenti,Liu}) are some well known examples of $\log$-concave (resp. $\log$-convex) sequences.
		
	 Study on $\log$-concavity property of the partition function was initiated by Chen \cite{Chentalk-2010} and Nicolas \cite{Nicolas} independently. A partition of a positive integer $n$ is a non-increasing sequence of positive integers whose sum is $n$ and $p(n)$ denotes the number of partitions of $n$. DeSalvo and Pak \cite{DeSalvo-Pak-2015} settled Chen's conjecture by proving $\log$-concavity of $\{p(n)\}_{n\geq26}$ using the Hardy-Ramanujan-Rademacher formula \cite{RamanujanHardy, Rademacher} and the error estimation due to Lehmer \cite{Lehmer}.
		
		Similar phenomena can also be found for the overpartition function. An overpartition of $n$, a generalization of partitions introduced by Corteel and Lovejoy \cite{Corteel-Lovejoy-2004}, is a nonincreasing sequence of natural numbers whose sum is $n$ in which the first occurrence of a number
		may be overlined and $\overline{p}(n)$ denotes the number of overpartitions of $n$. For convenience, define $\overline{p}(0)=1$. For example, there are $8$ overpartitions of $3$ enumerated by $3, \overline{3}, 2+1, \overline{2}+1, 2+\overline{1}, \overline{2}+\overline{1},1+1+1, \overline{1}+1+1$. Analogous to Hardy-Ramanujan-Rademacher series for partition function, due to Zuckerman \cite{Zuckerman}, we have 
		\begin{equation}\label{Zuckerman}
			\overline{p}(n)=\frac{1}{2\pi}\underset{2 \nmid k}{\sum_{k=1}^{\infty}}\sqrt{k}\underset{(h,k)=1}{\sum_{h=0}^{k-1}}\dfrac{\omega(h,k)^2}{\omega(2h,k)}e^{-\frac{2\pi i n h}{k}}\dfrac{d}{dn}\biggl(\dfrac{\sinh \frac{\pi \sqrt{n}}{k}}{\sqrt{n}}\biggr),
		\end{equation}
		where
		\begin{equation*}
			\omega(h,k)=\text{exp}\Biggl(\pi i \sum_{r=1}^{k-1}\dfrac{r}{k}\biggl(\dfrac{hr}{k}-\biggl\lfloor\dfrac{hr}{k}\biggr\rfloor-\dfrac{1}{2}\biggr)\Biggr)
		\end{equation*}
		for $(h,k)\in \mathbb{Z}_{\geq 0}\times\mathbb{Z}_{\geq 1}$. Engel \cite{Engel-2017} estimated the error term, denoted by $R_2(n,N)$ of \eqref{Zuckerman} given by
		\begin{equation}\label{Engel1}
			\overline{p}(n)=\frac{1}{2\pi}\underset{2 \nmid k}{\sum_{k=1}^{N}}\sqrt{k}\underset{(h,k)=1}{\sum_{h=0}^{k-1}}\dfrac{\omega(h,k)^2}{\omega(2h,k)}e^{-\frac{2\pi i n h}{k}}\dfrac{d}{dn}\biggl(\dfrac{\sinh \frac{\pi \sqrt{n}}{k}}{\sqrt{n}}\biggr)+R_{2}(n,N)
		\end{equation}
		with
		\begin{equation}\label{Engel2}
			\bigl|R_{2}(n,N)\bigr|< \dfrac{N^{5/2}}{\pi n^{3/2}} \sinh \biggl(\dfrac{\pi \sqrt{n}}{N}\biggr).
		\end{equation}
	and proved that $\{\overline{p}(n)\}_{n \geq 2}$ is $\log$-concave by \eqref{Engel1} and \eqref{Engel2} with $N=3$.
		
Log-convexity for a certain class of combinatorial sequences has been recorded in \cite{Liu}. The $\log$-convexity of $p(n)$ was conjectured by Sun \cite{Sun}, which was settled later by Chen and Zheng \cite{Chen and Zheng}. Moreover, Chen and Zheng showed that $\{\sqrt[n]{p(n)/n^{\alpha}}\}_{n \geq n(\alpha)}$ is $\log$-convex for any real $\alpha$. Following a similar line of work with a more general setting, the $\log$-convexity of $\{\sqrt[n]{\overline{p}(n)/n^{\alpha}}\}_{n \geq n_1(\alpha)}$ and the asymptotic growth of ${\Delta}^2 \log \sqrt[n]{\overline{p}(n)/n^{\alpha}}$ were achieved through finding a symmetric upper and lower bound of ${\Delta}^2 \log \sqrt[n]{\overline{p}(n)/n^{\alpha}}$ in \cite{M1}. This study was ended up by raising the following problem; \\
		Define $r_{\alpha}(n):=\sqrt[n]{\overline{p}(n)/n^{\alpha}}$.
		\begin{problem}\cite[Problem 3.1]{M1}\label{M1}
			Let $\alpha$ be a non-negative real number. Then for each $r\geq 1$, does there exists a positive integer $N(r,\alpha)$ so that for all $n \geq N(r,\alpha)$, one can obtain both upper bound and lower bound of $(-1)^r \Delta^r \log r_{\alpha}(n)$ that finally shows the asymptotic growth of $(-1)^r \Delta^r \log r_{\alpha}(n)$ as $n$ tends to infinity?
		\end{problem}
A primary motivation to consider the Problem \ref{M1} is to pursue an extensive study the higher order differences of the logarithm of $\sqrt[n]{\overline{p}(n)/n^{\alpha}}$ from an inequality perspective so as to establish its asymptotic growth. Analogous to the work of Chen, Wang, and Xie \cite{Chen2} in this direction for $p(n)$, Wang, Xie, and Zhang \cite{WXZ} showed a similar behavior for $\overline{p}(n)$. But here we will observe that the similar phenomena also occurs in the case of a $\log$-convex sequence $\sqrt[n]{\overline{p}(n)/n^{\alpha}}$.
\newpage
\begin{definition}\label{newdef}
		\begin{eqnarray}\label{def1}
		N_0(m) &:=&
		\begin{cases}
		1, &\quad \text{if}\ m=1,\\
		2m \log m-m \log \log m, & \quad \text{if}\ m \geq 2.
		\end{cases}
		\end{eqnarray}
		\end{definition}
%	\begin{definition}\label{newdef0}For $\alpha\in \mathbb{R}_{\geq 0}$, define
%		\begin{eqnarray}\label{def0}
%		\widehat{N}(\alpha) &:=&
%		\begin{cases}
%		\max\Biggl\{\Biggl[\dfrac{3490}{\alpha}\Biggr]+2, \Biggl\lceil\Bigl(\dfrac{4(11+5\alpha)}{3\pi}\Bigr)^4\Biggr\rceil, 5505\Biggr\}, &\quad \text{if}\ \alpha>0,\\
%		4522, & \quad \text{if}\ \alpha= 0.
%		\end{cases}
%		\end{eqnarray}
%	\end{definition}	
	\begin{definition}\label{newdef1}
	Following \cite[A000254]{Sloane}, we define the sequence $\{S_n\}_{n \geq 0}$\footnote[1]{Enumerates the number of permutations of $n+1$ elements with exactly two cycles.} as
	$$S_0=0\ \ \text{and}\ \ S_n=nS_{n-1}+(n-1)!.$$
	\end{definition}	
		\begin{definition}\label{Def} For $r \in \mathbb{Z}_{\geq 2}$ and $\alpha \in \mathbb{R}_{\geq 0}$, we define
			\begingroup
			\allowdisplaybreaks
			\begin{eqnarray}\label{def2}
				N_1(r)&:=& \max\Biggl\{85, \Biggl\lceil \dfrac{4}{\pi^2}N^2_0(2r+2)\Biggr\rceil \Biggr\},\\\label{def3}
				C(r)&:=&\pi \Bigl(\dfrac{1}{2}\Bigr)_{r},\\\label{def4}
				C(r,\alpha)&:=&(\alpha+1)r!+(r-1)!\log 8+1,\\\label{def5}
				C_2(r)&:=& \sum_{k=0}^{2r-2}\dfrac{1}{(k+1)^2\pi^{k+1}}\Bigl(\dfrac{k+1}{2}\Bigr)_r \dfrac{1}{r^k}+\dfrac{2r}{10^r},\\\label{def6}
				N_2(r)&:=& \max \Biggl\{\Biggl \lceil 2 \Bigl(C(r)\Bigr)^2 \Biggl \rceil, \Biggl \lceil \Biggl(\dfrac{2^{r+1}}{(r-1)! \log 8}\Biggr)^2 \Biggr \rceil \Biggr\},\\\label{def7}
				N_3(r,\alpha)&:=&\max \Biggl\{5505,4r^2, \Biggl \lceil \Biggl(4r^2C(r)+C_2(r)\Biggr)^\frac{4}{3}\Biggr \rceil, \Biggl\lceil \Biggl(\dfrac{C(r,\alpha)}{C(r)}\Biggr)^4 \Biggr\rceil \Biggr\},\\\label{def8}
				N(r,\alpha)&:=&\max\Biggl\{N_1(r),\Bigl\lceil e^{S_r/r!}\Bigr\rceil, N_2(r),N_3(r,\alpha)\Biggr\},\\\label{defnew9}
				\widetilde{N_1}(\alpha)&:=&\max\Bigl\{\lceil \sqrt{2\alpha}\rceil, \Bigl\lceil\sqrt[3]{3\alpha}\Bigr\rceil,\Bigl\lceil\sqrt[4]{11\alpha/3}\Bigr\rceil,\Bigl\lceil\sqrt[5]{10\alpha}\Bigr\rceil \Bigr\},\\\label{defnew10}
				\widetilde{N_2}(\alpha)&:=& \max\Bigl\{2, \lceil \sqrt{2\alpha}\rceil, \Bigl\lceil\sqrt[3]{3\alpha}\Bigr\rceil,\Bigl\lceil\sqrt[4]{11\alpha/3}\Bigr\rceil,\Bigl\lceil\sqrt[5]{12\alpha}\Bigr\rceil \Bigr\},\\\label{defnew11}
				\widetilde{N_3}(\alpha)&:=& \max\Bigl\{5505, \Bigl\lceil \Bigl(\frac{4 \alpha}{\alpha^2 +1}\Bigr)^{4/3} \Bigr\rceil, \Bigl\lceil (4 \alpha)^{4/11} \Bigr\rceil \Bigr\},\\\label{defnew12}
				\widetilde{N}(\alpha)&:=& \underset{1\leq i \leq 3}{\max}\Bigl\{\widetilde{N_i}(\alpha)\Bigr\}.%,\\\label{defnew13}
				%C_1(\alpha)&:=& \dfrac{2 \alpha+2}{23 \alpha+2147483648\cdot 262144^\alpha+4100},\\\label{defnew14}
				%C_2(\alpha)&:=& \dfrac{2+2 \alpha}{391 \alpha+21194814064\cdot 262144^\alpha+40493},\\\label{defnew15}
				%N_T(\alpha)&:=& \max\Bigl\{\widetilde{N}(\alpha),\widehat{N}(\alpha), \Bigl\lceil (C_1(\alpha))^{-12/5}\Bigr\rceil, \Bigl\lceil (C_2(\alpha))^{-12/5}\Bigr\rceil \Bigr\}.
			\end{eqnarray}
			\endgroup
		\end{definition}
		\begin{theorem}\label{mainresult2}
		Let $N(r,\alpha)$ be as in \eqref{def8}. For $r \in \mathbb{Z}_{\geq 2}$ and $n \geq N(r, \alpha)$,
			\begin{equation}\label{mainresult2eqn}
				0<\log \Biggl(1+\dfrac{C(r)}{n^{r+1/2}}-\dfrac{C(r,\alpha)}{n^{r+3/4}}\Biggr)<(-1)^{r}\Delta^r \log r_{\alpha}(n) <\log \Biggl(1+\dfrac{C(r)}{n^{r+1/2}}\Biggr),
			\end{equation}
			where $C(r)$ and $C(r,\alpha)$ are defined in \eqref{def3} and \eqref{def4} respectively.
		\end{theorem}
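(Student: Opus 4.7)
The plan is to prove \eqref{mainresult2eqn} in three stages: derive a sufficiently precise asymptotic expansion of $\log r_\alpha(n)$ with explicit error, apply the operator $(-1)^r \Delta^r$ termwise, and convert the resulting linear two-sided bound into the logarithmic form. The starting point is Engel's truncated series \eqref{Engel1}--\eqref{Engel2} with $N=3$: isolating the $k=1$ contribution and controlling the remainder yields an estimate of the form
\begin{equation*}
\overline{p}(n) = \frac{e^{\pi\sqrt n}}{8n}\Bigl(1 - \frac{1}{\pi\sqrt n} + O(n^{-1})\Bigr) + O\!\bigl(e^{\pi\sqrt n/3}\bigr).
\end{equation*}
Taking logarithms, dividing by $n$, and subtracting $\alpha(\log n)/n$ produces the expansion
\begin{equation*}
\log r_\alpha(n) = \frac{\pi}{\sqrt n} - \frac{(\alpha+1)\log n + \log 8}{n} + \sum_{k=3}^{2r+1}\frac{a_k}{n^{k/2}} + E(n,r),
\end{equation*}
where $E(n,r)$ is exponentially small and the $a_k$'s are read off from the Taylor series of $\sinh(\pi\sqrt n)/\sqrt n$ and of $\log(1+x)$.

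The second stage applies $(-1)^r\Delta^r$ to this expansion termwise. Using the integral representation $\Delta^r f(n) = \int_0^1\!\cdots\!\int_0^1 f^{(r)}(n+t_1+\cdots+t_r)\,dt_1\cdots dt_r$ together with $(-1)^r\tfrac{d^r}{dn^r}\,n^{-1/2} = \bigl(\tfrac{1}{2}\bigr)_r n^{-r-1/2}$, one finds that the leading term $\pi/\sqrt n$ contributes $\pi\bigl(\tfrac{1}{2}\bigr)_r (n+\theta)^{-r-1/2}$ for some $\theta\in[0,r]$, which equals $C(r)/n^{r+1/2}$ up to an error of order $C(r)\,r/n^{r+3/2}$, absorbed into $C(r,\alpha)/n^{r+3/4}$ once $n\geq N_2(r)$. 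The subleading layer $-((\alpha+1)\log n+\log 8)/n$ contributes, via the closed forms $(-1)^r\tfrac{d^r}{dn^r}((\log n)/n)=r!(\log n-H_r)/n^{r+1}$ and $(-1)^r\Delta^r(1/n)=r!/\prod_{j=0}^{r}(n+j)$, a quantity bounded in absolute value by $\bigl((\alpha+1)r!+(r-1)!\log 8\bigr)/n^{r+3/4}$ after invoking $\log n \leq n/(2r+2)$ (valid for $n\geq N_0(2r+2)$, Definition \ref{newdef}) and $r/n^{1/4}\leq 1$ (valid for $n\geq 4r^2$). The tail $\sum_k a_k/n^{k/2}$ is precisely what $C_2(r)$ of \eqref{def5} is designed to bound, and the ``$+1$'' in $C(r,\alpha)$ catches $E(n,r)$ together with the shift error for the main term.

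Having the linear sandwich
\begin{equation*}
\frac{C(r)}{n^{r+1/2}} - \frac{C(r,\alpha)}{n^{r+3/4}} < (-1)^r\Delta^r\log r_\alpha(n) < \frac{C(r)}{n^{r+1/2}},
\end{equation*}
the final stage is to invoke $\log(1+x)<x$ and $\log(1+x)>x-x^2/2$ for $x>0$, absorbing the quadratic loss $C(r)^2/(2n^{2r+1})$ into the slack between $n^{-r-3/4}$ and $n^{-r-1/2}$, valid for $n\geq N_3(r,\alpha)$. I expect the principal obstacle to be the uniform, $r$-dependent bookkeeping required to verify each threshold in Definition \ref{Def}: the $\log n$ factor spawned by $\Delta^r$ acting on $(\log n)/n$ must be tamed via the Stirling-type cutoff $N_0(2r+2)$; the integer shift of $\sinh(\pi\sqrt n)/\sqrt n$ arising when $\Delta^r$ replaces $d^r/dn^r$ forces $N_1(r)$ and the bound $\lceil e^{S_r/r!}\rceil$ associated with Definition \ref{newdef1}; and balancing all of these pieces so that the single expression $C(r,\alpha)/n^{r+3/4}$ dominates the combined error is the delicate technical step.
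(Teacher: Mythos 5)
Your overall route is the same as the paper's: decompose $\log r_{\alpha}(n)$ into the smooth layers $\pi/\sqrt n$, $-(\alpha+1)\log n/n$, $-\log 8/n$, the $\log(1-1/\widehat{\mu}(n))$ correction, and an exponentially small remainder; apply $(-1)^r\Delta^r$ layerwise via a derivative comparison (your integral representation plays the role of the paper's Proposition \ref{Odprop}); bound the remainder's contribution by $n^{-r-3/2}$ (the paper's Lemma \ref{lemfirst}); and then convert a linear two-sided estimate into the logarithmic one. The lower-bound half of your plan is sound, since once $C(r)/n^{r+1/2}-C(r,\alpha)/n^{r+3/4}>0$ the inequality $\log(1+y)<y$ immediately upgrades the linear lower bound to the logarithmic one, exactly as in the paper.

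The genuine gap is in your final step for the upper bound. From your sandwich $(-1)^r\Delta^r\log r_{\alpha}(n)<C(r)/n^{r+1/2}$ you cannot conclude $(-1)^r\Delta^r\log r_{\alpha}(n)<\log\bigl(1+C(r)/n^{r+1/2}\bigr)$: since $\log(1+x)<x$ for $x>0$, the target is \emph{strictly smaller} than your linear bound, so the implication runs the wrong way. To reach the stated upper bound you must show the quantity undershoots $C(r)/n^{r+1/2}$ by at least $\tfrac12 C(r)^2/n^{2r+1}$, and this margin has to beat the $+1/n^{r+3/2}$ error inherited from the remainder term. Your proposal cannot supply it, because you bound the entire subleading layer $-\bigl((\alpha+1)\log n+\log 8\bigr)/n$ only in absolute value and spend it on the lower side; and the ``slack between $n^{-r-3/4}$ and $n^{-r-1/2}$'' you invoke belongs to the lower bound, where no quadratic correction is needed at all. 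The paper's proof fixes exactly this point: after the Odlyzko step it \emph{retains} the negative contribution $-r!\log 8/(n+r)^{r+1}$ in the upper estimate and proves (this is \eqref{eqn9}, and the sole purpose of $N_2(r)$ in \eqref{def6}) that $-\dfrac{r!\log 8}{(n+r)^{r+1}}+\dfrac{1}{n^{r+3/2}}<-\dfrac12\Bigl(\dfrac{C(r)}{n^{r+1/2}}\Bigr)^2$, whence the quantity is $<x-\tfrac12x^2<\log(1+x)$ with $x=C(r)/n^{r+1/2}$. So the repair is local but essential: keep the sign of the $\log 8$ (or of the $(\alpha+1)\log n$) layer on the upper side and verify it dominates both loss terms for $n\ge N_2(r)$. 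A secondary, non-fatal remark: your attribution of the cutoffs is off — $N_0(2r+2)$ and $N_1(r)$ enter through the remainder bound $|G_r(n)|<n^{-r-3/2}$, while $\lceil e^{S_r/r!}\rceil$ is what makes $r!\log n-S_r>0$ so that the derivative-sign hypothesis holds for the $\log n/n$ layer; these only affect bookkeeping, not correctness.
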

			\begin{corollary}\label{Cor1}
			For $r \in \mathbb{Z}_{\geq 2}$,
			\begin{equation}\label{coreqn1}
				\lim_{n\rightarrow \infty} n^{r+1/2}(-1)^{r}\Delta^r \log \sqrt[n]{\overline{p}(n)/n^{\alpha}}= \pi \Bigl(\dfrac{1}{2}\Bigr)_{r}.
			\end{equation}
		\end{corollary}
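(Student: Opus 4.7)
The plan is to derive Corollary \ref{Cor1} directly from Theorem \ref{mainresult2} by a straightforward squeeze argument. For fixed $r \geq 2$ and $\alpha \geq 0$, the theorem sandwiches $(-1)^r \Delta^r \log r_\alpha(n)$ between two explicit logarithmic expressions for all $n \geq N(r,\alpha)$, so the task reduces to multiplying each of the three quantities in \eqref{mainresult2eqn} by $n^{r+1/2}$ and showing that the resulting upper and lower bounds share a common limit.

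For the upper bound I would apply the elementary expansion $\log(1+x) = x + O(x^2)$ as $x \to 0$. With $x = C(r)/n^{r+1/2}$ this gives
$$n^{r+1/2}\log\Bigl(1+\frac{C(r)}{n^{r+1/2}}\Bigr) = C(r) + O\Bigl(\frac{1}{n^{r+1/2}}\Bigr) \longrightarrow C(r) = \pi\Bigl(\tfrac{1}{2}\Bigr)_r$$
as $n \to \infty$. For the lower bound, the central observation is that inside the logarithm $1 + C(r)/n^{r+1/2} - C(r,\alpha)/n^{r+3/4}$ the subtractive correction is of strictly smaller order, since $r+3/4 > r+1/2$. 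Applying the same expansion and multiplying by $n^{r+1/2}$ yields
$$n^{r+1/2}\log\Bigl(1+\frac{C(r)}{n^{r+1/2}} - \frac{C(r,\alpha)}{n^{r+3/4}}\Bigr) = C(r) - \frac{C(r,\alpha)}{n^{1/4}} + O\Bigl(\frac{1}{n^{r+1/2}}\Bigr),$$
which also tends to $C(r)$ because $n^{1/4} \to \infty$. The squeeze theorem then forces $n^{r+1/2}(-1)^r \Delta^r \log r_\alpha(n) \to C(r)$.

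There is essentially no obstacle beyond bookkeeping: one only needs the strict inequality $r+3/4 > r+1/2$ so that the $C(r,\alpha)$ correction decays like $n^{-1/4}$ after rescaling, and the fact that $C(r)$, $C(r,\alpha)$, and $N(r,\alpha)$ are constants depending only on $r$ and $\alpha$. Both of these are built into Definition \ref{Def}, so the passage to the limit is immediate once Theorem \ref{mainresult2} is established.
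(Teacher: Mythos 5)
Your proposal is correct and follows essentially the same route as the paper: the paper's proof simply multiplies \eqref{mainresult2eqn} by $n^{r+1/2}$ and takes the limit, and your squeeze argument with the expansion $\log(1+x)=x+O(x^2)$ just makes explicit the elementary bookkeeping the paper leaves implicit.
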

		\begin{proof}
			Multiplying both sides of  \eqref{mainresult2eqn} by $n^{r+1/2}$ and taking limits as $n$ tends to infinity, we obtain \eqref{coreqn1}.
		\end{proof}
\begin{remark}\label{newremark}
	Theorem \ref{mainresult2} and Corollary \ref{Cor1} provide an explicit answer to Problem \ref{M1}. Also note that for $r=2$, \eqref{coreqn1} is documented in \cite[Corollary 1.5]{M1}.
\end{remark}

\begin{corollary}\cite[Corollary 1.3]{M1}\label{Cor3}
For $n \geq 4$, $r_0(n)^2< r_0(n-1)r_0(n+1)$ and for $n \geq 19$, $r_1(n)^2< r_1(n-1)r_1(n+1)$.	
\end{corollary}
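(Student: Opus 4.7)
The plan is to derive both statements as the $r=2$ specialization of Theorem \ref{mainresult2}, with the small-$n$ range handled by a finite numerical verification.

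Setting $r=2$ in Theorem \ref{mainresult2} with $\alpha \in \{0,1\}$, the left-hand strict inequality reads $(-1)^{2}\Delta^{2}\log r_{\alpha}(n) = \Delta^{2}\log r_{\alpha}(n) > 0$ for every $n \geq N(2,\alpha)$. Unwinding the definition of the second forward difference,
\[
\Delta^{2}\log r_{\alpha}(n) \;=\; \log r_{\alpha}(n+2) - 2\log r_{\alpha}(n+1) + \log r_{\alpha}(n),
\]
so its positivity is equivalent to $r_{\alpha}(n+1)^{2} < r_{\alpha}(n)\, r_{\alpha}(n+2)$. Shifting the index by one then gives $r_{\alpha}(n)^{2} < r_{\alpha}(n-1)\, r_{\alpha}(n+1)$ for every $n \geq N(2,\alpha)+1$, which handles the tail in both cases $\alpha = 0$ and $\alpha = 1$ uniformly.

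To bring the starting index down to $n=4$ (when $\alpha = 0$) and $n=19$ (when $\alpha = 1$), the inequality must be verified directly on the finite ranges $4 \leq n \leq N(2,0)$ and $19 \leq n \leq N(2,1)$. For this I would tabulate $\overline{p}(n)$ on the required interval---whose upper endpoint is bounded above by the hard-coded constant $5505$ entering \eqref{def7} and \eqref{defnew11}---either from the generating product $\prod_{k\geq 1}(1+q^{k})/(1-q^{k})$ or by evaluating the Zuckerman expansion \eqref{Zuckerman} to sufficient precision using the error estimate \eqref{Engel2}, and then check
\[
\bigl(\overline{p}(n)/n^{\alpha}\bigr)^{2/n} < \bigl(\overline{p}(n-1)/(n-1)^{\alpha}\bigr)^{1/(n-1)}\bigl(\overline{p}(n+1)/(n+1)^{\alpha}\bigr)^{1/(n+1)}
\]
for each $n$ in the finite range. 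This is exactly the route carried out in \cite[Corollary 1.3]{M1}, of which the present statement is a restatement.

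The only real obstacle is the moderate size of the table of $\overline{p}(n)$ needed, as $N(2,\alpha)$ sits at the $5505$ floor; beyond Theorem \ref{mainresult2} no further analytic ingredient is required, and the small-$n$ verification is a routine, if lengthy, finite computation.
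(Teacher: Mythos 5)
Your argument is correct: the $r=2$ case of Theorem \ref{mainresult2} gives $\Delta^{2}\log r_{\alpha}(n)>0$ for $n\geq N(2,\alpha)$, which after the index shift is exactly $r_{\alpha}(n)^{2}<r_{\alpha}(n-1)r_{\alpha}(n+1)$ for $n\geq N(2,\alpha)+1$, and the remaining finitely many cases can indeed be settled by exact computation of $\overline{p}(n)$. However, this is not the route the paper takes: the corollary is stated with no proof at all, being quoted directly from \cite[Corollary 1.3]{M1}, where the second-difference case was treated on its own with bounds tailored to $r=2$; that earlier analysis produces a much smaller threshold $n_1(\alpha)$, so the numerical verification needed to reach the sharp cutoffs $n\geq 4$ (for $\alpha=0$) and $n\geq 19$ (for $\alpha=1$) is short. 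By contrast, your specialization of Theorem \ref{mainresult2} inherits the hard-coded floor $5505$ in \eqref{def7}, so $N(2,\alpha)=5505$ for $\alpha\in\{0,1\}$ and your finite check must cover roughly $5500$ values of $n$, with $\overline{p}(n)$ computed exactly (e.g.\ from the generating function) and the comparison of $n$-th roots done with rigorously controlled precision, since a naive floating-point check does not by itself certify the strict inequalities. The trade-off is clear: your approach is self-contained within the present paper and needs no ingredient beyond Theorem \ref{mainresult2}, at the price of a substantially larger (though entirely routine and finite) computation, whereas the paper's citation leans on the sharper, $r=2$-specific estimates of \cite{M1} to keep the verified range tiny.
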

	\begin{corollary}\label{cor2}
		The sequence $\bigl\{\sqrt[n]{\overline{p}(n)/n^{\alpha}}\bigr\}_{n \geq  N(2, \alpha)}$ is $\log$-convex.
	\end{corollary}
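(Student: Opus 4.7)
The plan is to derive Corollary \ref{cor2} as an immediate application of Theorem \ref{mainresult2} specialized to $r=2$. First, I would invoke the theorem at $r=2$: for every $n \geq N(2,\alpha)$,
\begin{equation*}
0 < \log\biggl(1 + \frac{C(2)}{n^{5/2}} - \frac{C(2,\alpha)}{n^{11/4}}\biggr) < \Delta^2 \log r_\alpha(n) < \log\biggl(1 + \frac{C(2)}{n^{5/2}}\biggr).
\end{equation*}
The key content being extracted is the strict positivity of the middle term $\Delta^2 \log r_\alpha(n) > 0$, together with the fact that the range of $n$ on which this holds is exactly $n \geq N(2,\alpha)$.

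Next, I would translate this into the log-convexity statement by expanding the forward difference: $\Delta^2 \log r_\alpha(n) = \log r_\alpha(n+2) - 2\log r_\alpha(n+1) + \log r_\alpha(n)$, so its positivity is equivalent to $r_\alpha(n) \cdot r_\alpha(n+2) > r_\alpha(n+1)^2$ for all $n \geq N(2,\alpha)$, which is exactly the (strict) log-convexity of $\bigl\{\sqrt[n]{\overline{p}(n)/n^{\alpha}}\bigr\}_{n \geq N(2,\alpha)}$.

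There is essentially no serious obstacle here, since all of the analytic heavy lifting is packaged inside Theorem \ref{mainresult2}. The only item that deserves a brief verification is that the index threshold is consistent: one needs to check that for $n \geq N(2,\alpha)$ the argument $1 + C(2)/n^{5/2} - C(2,\alpha)/n^{11/4}$ genuinely exceeds $1$, i.e.\ that $n^{1/4} > C(2,\alpha)/C(2)$. This is ensured by the $\bigl\lceil(C(r,\alpha)/C(r))^4\bigr\rceil$ term contained in $N_3(r,\alpha)$ inside Definition \ref{Def}, so the lower bound in \eqref{mainresult2eqn} is indeed positive throughout the claimed range. Hence Corollary \ref{cor2} follows with no further work.
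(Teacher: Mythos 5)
Your proposal is correct and coincides with the paper's (implicit) argument: Corollary \ref{cor2} is read off from Theorem \ref{mainresult2} at $r=2$, since the strict positivity of $\Delta^2\log r_{\alpha}(n)=\log r_{\alpha}(n+2)-2\log r_{\alpha}(n+1)+\log r_{\alpha}(n)$ for $n\geq N(2,\alpha)$ is exactly strict log-convexity of the sequence from that index on. Your extra check that the lower bound's argument exceeds $1$ (via the $\bigl\lceil (C(r,\alpha)/C(r))^4\bigr\rceil$ term in $N_3(r,\alpha)$) is already built into the theorem's statement, so nothing further is needed.
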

%Define $r(n):=\sqrt[n]{\overline{p}(n)}$ and $r_1(n):=\sqrt[n]{\overline{p}(n)/n}$.
\begin{corollary}\label{Cor4} For $n \geq N(3,\alpha)$, $\Delta^3\log \sqrt[n]{\overline{p}(n)/(n)^{\alpha}}<0$. 
\end{corollary}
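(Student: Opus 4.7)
The plan is to derive Corollary \ref{Cor4} as a one-step consequence of Theorem \ref{mainresult2} specialized at $r=3$. The only structural input needed is the parity observation: since $3$ is odd, $(-1)^{3}=-1$, so a strictly positive lower bound for $(-1)^{r}\Delta^{r}\log r_{\alpha}(n)$ translates directly into a strictly negative upper bound for $\Delta^{3}\log r_{\alpha}(n)$ itself. No new analysis of $\overline{p}(n)$ or of the finite differences is required.

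Concretely, I would apply \eqref{mainresult2eqn} with $r=3$ for $n\geq N(3,\alpha)$ to obtain
$$0 \;<\; \log\!\left(1+\frac{C(3)}{n^{7/2}}-\frac{C(3,\alpha)}{n^{15/4}}\right) \;<\; -\Delta^{3}\log r_{\alpha}(n),$$
and then multiply through by $-1$, which gives $\Delta^{3}\log r_{\alpha}(n)<0$ and closes the argument.

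The only point worth sanity-checking is that the strict positivity of the middle logarithmic expression (equivalently $C(3)/n^{7/2} > C(3,\alpha)/n^{15/4}$) is automatic on the range $n\geq N(3,\alpha)$; this is the one place where bookkeeping matters. This positivity is already baked into Definition \ref{Def}: the component $\lceil (C(r,\alpha)/C(r))^{4}\rceil$ inside $N_{3}(r,\alpha)$ in \eqref{def7} forces $n^{1/4} > C(3,\alpha)/C(3)$, so the $n^{-7/2}$ term dominates the $n^{-15/4}$ term, and no genuine obstacle remains. In summary, once Theorem \ref{mainresult2} is granted, the corollary is a one-line sign chase and there is no substantive work left to perform.
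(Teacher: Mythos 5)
Your proposal is correct and is exactly the argument the paper intends: Corollary \ref{Cor4} is an immediate consequence of Theorem \ref{mainresult2} with $r=3$, since the strict positivity of the lower bound (already asserted in \eqref{mainresult2eqn} and guaranteed on $n\geq N(3,\alpha)$ via the $\lceil (C(r,\alpha)/C(r))^{4}\rceil$ component of \eqref{def7}) forces $-\Delta^{3}\log r_{\alpha}(n)>0$. No further comment is needed.
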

A sequence $\{a_n\}_{n \geq 0}$ is said to be ratio $\log$-concave (resp. ratio $\log$-convex) whenever the ratio sequence $\{{a_{n+1}}/{a_n}\}_{n\geq0}$ is $\log$-concave (resp. $\log$-convex). Thus  Corollary \ref{Cor4} is equivalent to state that the sequence $\{\sqrt[n]{\overline{p}(n)/n^{\alpha}}\}_{n\geq N(3,\alpha)}$ is ratio $\log$-concave. Following the notion of higher order $\log$-monotonic sequences \cite{Zhu}, Corollaries \ref{cor2} and \ref{Cor4} immediately lead to the order $2$ $\log$-monotonicity of $\{\sqrt[n]{\overline{p}(n)/n^{\alpha}}\}_{n \geq N(3,\alpha)}$.

A sequence $\{a_n\}_{n \geq 0}$ is said to satisfy the higher order Tur\'an inequalities if for $n \geq 1$,
\begin{equation*}
	4({a_n}^2 -a_{n-1}a_{n+1})({a_{n+1}}^2 -a_n a_{n+2})-(a_n a_{n+1}-a_{n-1} a_{n+2})^2 \geq 0.
\end{equation*}
 Chen, Jia, and Wang \cite{ChenJiaWang-2019} proved that $\{p(n)\}_{n \geq 95}$ satisfies the higher order Tur\'an inequalities. Besides $\log$-concavity of partitions, similar associated inequalities have been documented in \cite{Chentalk-2010,DeSalvo-Pak-2015,Chen2} and in many others. Liu and Zhang \cite{LiuZhang-2021} established a list of inequalities for overpartitions similar to those related with the partition function. Through the lens of study on roots of polynomials, $\log$-concavity of a sequence $\{a_n\}_{n\geq 0}$ is equivalent to the real-rootedness of the corresponding Jensen polynomial of degree 2 \begin{equation*}
 J^{2,n-1}_{a_n}(x) = \sum_{j=0}^{2}\binom{2}{j}a_{n-1+j}x^j.
 \end{equation*}
 Also note that the distinct real-rootedness of $J^{3,n-1}_{a}(x)=\sum_{j=0}^{3}\binom{3}{j}a_{n-1+j}x^j$ is equivalent to say that $\{a_n\}_{n \geq 0}$ satisfies the strict higher order Tur\'an inequalities. In general, $J^{d,n}_{a}(x):= \displaystyle\sum_{j=0}^{d}\binom{d}{j}a_{n+j}x^j$ is called the $n$-th Jensen polynomial of degree $d$ associated with $\{a_n\}_{n\geq 0}$. A detailed documentation on hyperbolicity of $J^{d,n}_{p}(x)$ can be found in the work of Griffin, Ono, Rollen, and Zagier \cite{GORZ}. Larson and Wagner \cite{LarsonWagner} computed an effective estimate of $N(d)$ such that for $n>N(d)$, $J^{d,n}_p(x)$ has all real roots. Following this theme, here we define the notion of reverse higher order Tur\'{a}n inequalities for a sequence.
 \begin{definition}\label{definition2}
	A sequence $\{a_n\}_{n \geq 0}$ is said to satisfy reverse higher order Tur\'an inequalities if for $n \geq 1$, 
	\begin{equation}\label{reverse}
		4({a_n}^2 -a_{n-1}a_{n+1})({a_{n+1}}^2 -a_n a_{n+2})-(a_n a_{n+1}-a_{n-1} a_{n+2})^2 \leq 0.
	\end{equation} 
\end{definition}
%Using the positivity of $(-1)^r {\Delta}^r \log \sqrt[n]{\overline{p}(n)/n^{\alpha}}$ for $r=2,3$, our another motivation of this paper is the following;
\begin{theorem}\label{mainresult3} 
%Let $N_T(\alpha)$ be defined as in \eqref{defnew15}. Then 
For $n \geq N_T(\alpha)$\footnote{The cutoff $N_T(\alpha)$ given explicitly in \eqref{Gfinalcutoff}.},
	\begin{equation}\label{mainresult3eqn}
	\begin{split}
	4\Bigl({r_{\alpha}(n)}^2-r_{\alpha}(n-1)r_{\alpha}(n+1)\Bigr)\Bigl({r_{\alpha}(n+1)}^2
	&-r_{\alpha}(n)r_{\alpha}(n+2)\Bigr)\\
	&-\Bigl(r_{\alpha}(n)r_{\alpha}(n+1)-r_{\alpha}(n-1) r_{\alpha}(n+2)\Bigr)^2 < 0. 
	\end{split}
	\end{equation}
\end{theorem}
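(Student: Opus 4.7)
The plan is to reduce the four-term expression on the left of \eqref{mainresult3eqn} to a single inequality in the second log-differences of $r_\alpha(n)$, and then dispatch that inequality by an elementary chain that leverages the log-convexity already supplied by Theorem~\ref{mainresult2}.

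Set $L_n := \log r_\alpha(n)$, $A_n := \Delta^2 L_{n-1}$, and $B_n := \Delta^2 L_n$. A direct computation using the identities
$$
2L_n - L_{n-1} - L_{n+1} = -A_n,\quad 2L_{n+1} - L_n - L_{n+2} = -B_n,\quad L_n + L_{n+1} - L_{n-1} - L_{n+2} = -(A_n+B_n),
$$
yields the three factorizations
\begin{align*}
r_\alpha(n)^2 - r_\alpha(n-1)r_\alpha(n+1) &= -e^{L_{n-1}+L_{n+1}}\bigl(1-e^{-A_n}\bigr),\\
r_\alpha(n+1)^2 - r_\alpha(n)r_\alpha(n+2) &= -e^{L_n+L_{n+2}}\bigl(1-e^{-B_n}\bigr),\\
r_\alpha(n)r_\alpha(n+1) - r_\alpha(n-1)r_\alpha(n+2) &= -e^{L_{n-1}+L_{n+2}}\bigl(1-e^{-(A_n+B_n)}\bigr).
\end{align*}
Substituting into the LHS of \eqref{mainresult3eqn}, pulling out the positive factor $e^{L_{n-1}+L_n+L_{n+1}+L_{n+2}}$, and applying the identity $e^{x}(1-e^{-x})^2 = 4\sinh^2(x/2)$ with $x=A_n+B_n$, the target inequality \eqref{mainresult3eqn} becomes equivalent to
$$
\bigl(1-e^{-A_n}\bigr)\bigl(1-e^{-B_n}\bigr) < \sinh^2\bigl((A_n+B_n)/2\bigr).
$$

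The analytic heart of the argument is the elementary chain, valid for all $A, B > 0$:
$$
\bigl(1-e^{-A}\bigr)\bigl(1-e^{-B}\bigr) < AB \leq \Bigl(\frac{A+B}{2}\Bigr)^{2} < \sinh^2\bigl((A+B)/2\bigr),
$$
justified respectively by $1 - e^{-x} < x$ for $x > 0$, AM--GM, and $\sinh x > x$ for $x > 0$. The first and third inequalities are strict once $A, B > 0$, so strictness propagates throughout. Thus \eqref{mainresult3eqn} holds at every $n$ for which $A_n > 0$ and $B_n > 0$, i.e., wherever $\{r_\alpha(k)\}$ is strictly log-convex at the indices $k = n$ and $k = n+1$.

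Finally, the positivity $A_n, B_n > 0$ is just Theorem~\ref{mainresult2} applied with $r = 2$ (equivalently Corollary~\ref{cor2}), which immediately yields the conceptual cutoff $N_T(\alpha) \leq N(2,\alpha)+1$. The remaining work -- and the genuine obstacle -- is to sharpen this to the explicit threshold $N_T(\alpha)$ claimed in the theorem. For this step I would avoid invoking Theorem~\ref{mainresult2} at higher $r$ and instead bound $\Delta^2 L_{n-1}$ and $\Delta^2 L_n$ directly via the leading asymptotic $\pi(\tfrac12)_2/n^{5/2}$ from Corollary~\ref{Cor1}, while controlling lower-order error terms with the tailored constants $\widetilde{N_1}(\alpha), \widetilde{N_2}(\alpha), \widetilde{N_3}(\alpha)$ of Definition~\ref{Def}, so as to certify positivity of $A_n, B_n$ at an $n$-threshold considerably below $N(2,\alpha)+1$.
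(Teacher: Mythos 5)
Your reduction is correct and, in substance, far more elementary than the paper's argument. After dividing \eqref{mainresult3eqn} by $r_{\alpha}(n)^2 r_{\alpha}(n+1)^2$, the paper works with exactly the same normalized inequality $4(1-u_{\alpha}(n))(1-u_{\alpha}(n+1))<(1-u_{\alpha}(n)u_{\alpha}(n+1))^2$ with $u_{\alpha}(n),u_{\alpha}(n+1)>1$; your $A_n,B_n$ formulation is this statement with $u_{\alpha}(n)=e^{A_n}$, $u_{\alpha}(n+1)=e^{B_n}$, and your chain $(1-e^{-A})(1-e^{-B})<AB\le \bigl(\tfrac{A+B}{2}\bigr)^2<\sinh^2\bigl(\tfrac{A+B}{2}\bigr)$ is equivalent (multiply through by $4e^{A+B}$) to the elementary fact that $4(u-1)(v-1)\le (u+v-2)^2<(uv-1)^2$ for all $u,v>1$. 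Thus strict log-convexity at the two relevant indices, i.e.\ Theorem~\ref{mainresult2} with $r=2$, already gives the strict reverse Tur\'an inequality for all $n\ge N(2,\alpha)+1$. The paper instead proves the normalized inequality quantitatively: it develops two-sided expansions of $u_{\alpha}(n)$ and $u_{\alpha}(n+1)$ in powers of $n^{-1/2}$ (Lemmas~\ref{lem3}--\ref{lemnew2}, Theorem~\ref{newtheorem}), refines them, and verifies that the main-term difference $\mathcal{M}_2(n,\alpha)-4\mathcal{M}_1(n,\alpha)=\frac{225\pi^2}{64n^7}$ dominates the $O(n^{-57/8})$ errors, which is what produces the numerical constant in \eqref{Gfinalcutoff}; that route yields explicit asymptotic control of both sides of the inequality but is not needed for the qualitative statement, whereas your route exposes that log-convexity alone suffices. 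The one correction: your closing paragraph has the cutoff comparison backwards. $N_T(\alpha)$ in \eqref{Gfinalcutoff} is at least $343361460986$ and contains terms such as $\lceil (4+34\alpha)^{19}\rceil$ and $\bigl\lceil e^{(6\alpha+2^{12}+2^{31+18\alpha})/(2+17\alpha)}\bigr\rceil$, so it vastly exceeds $N(2,\alpha)+1$, which grows only like a constant plus $\alpha^4$; hence no sharpening below $N(2,\alpha)+1$ is required, and the ``genuine obstacle'' you describe does not exist. You only need to record the one-line observation that $N_T(\alpha)\ge N(2,\alpha)+1$ for every $\alpha\ge 0$, after which your argument proves the theorem on the whole claimed range (indeed on the larger range $n\ge N(2,\alpha)+1$).
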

In view of the above discussion, Theorem \ref{mainresult3} can be rephrased as follows.
\begin{theorem}
For $n \geq N_T(\alpha)$, the Jensen polynomial $J^{3,n-1}_{r_{\alpha}}(x)$ has three distinct roots, among which one real and other two are complex conjugates.
\end{theorem}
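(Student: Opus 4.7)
The plan is to view this theorem as a direct algebraic translation of Theorem \ref{mainresult3}, mediated by the discriminant of the cubic Jensen polynomial
$$J^{3,n-1}_{r_{\alpha}}(x)=r_{\alpha}(n-1)+3r_{\alpha}(n)x+3r_{\alpha}(n+1)x^2+r_{\alpha}(n+2)x^3.$$
The guiding observation is that, up to a positive constant, the discriminant of this cubic is exactly the expression appearing on the left-hand side of \eqref{mainresult3eqn}.

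First I would apply the standard discriminant formula $\Delta = 18ABCD - 4B^3D + B^2C^2 - 4AC^3 - 27A^2D^2$ for a generic cubic $Ax^3+Bx^2+Cx+D$ with $(A,B,C,D) = (r_{\alpha}(n+2), 3r_{\alpha}(n+1), 3r_{\alpha}(n), r_{\alpha}(n-1))$. A routine expansion and collection of terms produces the identity
$$\Delta = 27\Bigl[4\bigl(r_{\alpha}(n)^2 - r_{\alpha}(n-1)r_{\alpha}(n+1)\bigr)\bigl(r_{\alpha}(n+1)^2 - r_{\alpha}(n)r_{\alpha}(n+2)\bigr) - \bigl(r_{\alpha}(n)r_{\alpha}(n+1) - r_{\alpha}(n-1)r_{\alpha}(n+2)\bigr)^2\Bigr],$$
so $\Delta$ is precisely $27$ times the left-hand side of \eqref{mainresult3eqn}. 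By Theorem \ref{mainresult3}, this quantity is strictly negative for $n \geq N_T(\alpha)$, whence $\Delta<0$. The classical trichotomy for real cubics then asserts that a cubic over $\mathbb{R}$ has $\Delta<0$ if and only if it has exactly one real root and two non-real complex conjugate roots, all three distinct; applying this to $J^{3,n-1}_{r_{\alpha}}(x)$ delivers the claim.

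No substantive obstacle is anticipated, since Theorem \ref{mainresult3} already does the analytic heavy lifting. The only bookkeeping point is to confirm the sign convention in the discriminant formula (so that $\Delta<0$ does correspond to the one-real/two-complex configuration), which I would verify against a concrete cubic such as $x^3+1$, whose discriminant is $-27<0$ and whose roots consist of the real $x=-1$ together with two complex conjugates.
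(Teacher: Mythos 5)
Your proposal is correct and matches the paper's (largely implicit) reasoning: the paper simply declares the theorem a rephrasing of Theorem \ref{mainresult3} via the stated equivalence between the higher order Tur\'an expression and the nature of the roots of $J^{3,n-1}_{r_{\alpha}}(x)$, and that equivalence is exactly your discriminant identity $\Delta = 27\bigl[4(r_{\alpha}(n)^2-r_{\alpha}(n-1)r_{\alpha}(n+1))(r_{\alpha}(n+1)^2-r_{\alpha}(n)r_{\alpha}(n+2))-(r_{\alpha}(n)r_{\alpha}(n+1)-r_{\alpha}(n-1)r_{\alpha}(n+2))^2\bigr]$, which checks out by direct expansion. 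Your write-up just makes explicit the computation the paper leaves to the reader (together with the harmless observation that the leading coefficient $r_{\alpha}(n+2)>0$, so the polynomial is genuinely cubic), so the argument is complete.
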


We organize this paper as follows. Results associated with $(-1)^r \Delta^r \log r_{\alpha}(n)$ are given in Section \ref{sec1}. Before presenting the proof, after preparing a basic set up, preliminary lemmas (Lemmas \ref{lemfirst} and \ref{lem2}) in Subsection \ref{prelimlem}, and then by applying these two lemmas, we prove Theorem \ref{mainresult2} in Subsection \ref{prof1}. Study on the reverse higher Tur\'{a}n inequalities for $r_{\alpha}(n)$ is presented in Section \ref{sec2}. An upper bound estimation of $\dfrac{r_{\alpha}(n-1)r_{\alpha}(n+1)}{r_{\alpha}(n)^2}$ is carried out in Subsection \ref{subsecmain1}, and using that bound, we give a proof of Theorem \ref{mainresult3} in Subsection \ref{prof2}. Finally we conclude this paper by raising a question on infinite log-convexity of $r_{\alpha}(n)$ in Section \ref{conclusion}.
%Analogous to the work done by Chen et al.\cite{ChenJiaWang-2019}, we prove the following equivalent form of 
\section{Inequalities for $(-1)^r \Delta^r \log r_{\alpha}(n)$ }\label{sec1}
\subsection{Preliminary lemmas}\label{prelimlem}
Set $\widehat{\mu}(n)=\pi \sqrt n$. First we rewrite $\overline{p}(n),$ by setting $N=3$ in (\ref{Engel1}), as follows
\begin{equation}\label{eqn1}
	\overline{p}(n)=\widehat{T}(n) \Biggl(1+\dfrac{\widehat{R}(n)}{\widehat{T}(n)}\Biggr),
\end{equation}
where
\begin{eqnarray}\label{eqn2}
	\widehat{T}(n)&=& \dfrac{1}{8n}\Bigl(1-\dfrac{1}{\widehat{\mu}(n)}\Bigr)e^{\widehat{\mu}(n)},\\ \label{eqn3}
	\widehat{R}(n)&=& \dfrac{1}{8n}\Bigl(1+\dfrac{1}{\widehat{\mu}(n)}\Bigr)e^{-\widehat{\mu}(n)}+R_2(n,3).
\end{eqnarray}
Taking logarithm on both sides of (\ref{eqn1}), along with utilizing (\ref{eqn2}) and (\ref{eqn3}), we have
\begin{align*}
	\log r_{\alpha}(n) &= \dfrac{\log \overline{p}(n)}{n} - \dfrac{\alpha \log n}{n}\\
	&= \dfrac{\widehat{\mu}(n)}{n} -\dfrac{(\alpha +1) \log n}{n}+\dfrac{\log (\widehat{\mu}(n)-1)-\log \widehat{\mu}(n)}{n}-\dfrac{\log 8}{n} + \dfrac{1}{n}\log\  \Biggl(1+\dfrac{\widehat{R}(n)}{\widehat{T}(n)}\Biggr).
\end{align*}
Consequently, by $r$-fold application of $\Delta$ on $\log r_{\alpha}(n)$, it follows that
\begin{equation}\label{eqn4}
	(-1)^{r}\Delta^r \log r_{\alpha}(n)=H_{r,\alpha}(n)+G_r(n),
\end{equation}
where
\begin{eqnarray}\label{eqn5}
	H_{r,\alpha}(n) &=& (-1)^{r}\Delta^r \Biggl(\dfrac{\widehat{\mu}(n)}{n} -\dfrac{(\alpha +1) \log n}{n}+\dfrac{\log (\widehat{\mu}(n)-1)-\log \widehat{\mu}(n)}{n}-\dfrac{\log 8}{n}\Biggr),\\ \label{eqn6new}
	G_r(n) &=& (-1)^{r}\Delta^r \Biggl( \dfrac{1}{n} \log\  \Biggl(1+\dfrac{\widehat{R}(n)}{\widehat{T}(n)}\Biggr) \Biggr).
\end{eqnarray}
Then we have for $r \geq 1$,
\begin{equation}\label{eqn8new}
	H_{r,\alpha}(n)-|G_r(n)| \leq (-1)^{r}\Delta^r \log r_{\alpha}(n) \leq H_{r,\alpha}(n)+|G_r(n)|.
\end{equation}
Next, we give bounds for $H_{r,\alpha}(n)$ and $|G_r(n)|$ individually to get the estimates for $(-1)^{r}\Delta^r \log r_{\alpha}(n)$.
%\begin{lemma}\cite[Lemma 2.1]{GZZ}\label{lem0}
%	For any integer $m\geq1$ and $x \geq N_0(m)$,
%	\begin{align*}
%		x^me^{-x}<1.
%	\end{align*}
%\end{lemma}
Applying \cite[Lemma 2.1]{GZZ} and following the sketch of proof of \cite[Lemma 2.3]{Mukherjee}, we obtain an upper bound for $|G_r(n)|$.
\begin{lemma}\label{lemfirst}
Let $N_1(r)$ be defined as in \eqref{def2}.	For all $n \geq N_1(r)$ and $r \geq 1$,
	\begin{equation}\label{lem1eqn1}
		|G_r(n)| < \dfrac{1}{n^{r+3/2}}.
	\end{equation}
\end{lemma}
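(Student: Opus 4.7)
The plan is to convert the finite difference into a derivative and then exploit the exponential smallness of $\widehat{R}/\widehat{T}$. The mean-value identity
\begin{equation*}
\Delta^r f(n) = f^{(r)}(\xi_n) \quad \text{for some } \xi_n \in (n,n+r),
\end{equation*}
which is the content of \cite[Lemma 2.1]{GZZ}, reduces \eqref{lem1eqn1} with $f(x)=\phi(x):=\frac{1}{x}\log\bigl(1+\widehat{R}(x)/\widehat{T}(x)\bigr)$ to bounding $|\phi^{(r)}(\xi)|$ uniformly for $\xi \geq n$.

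From \eqref{eqn2}--\eqref{eqn3} together with Engel's bound \eqref{Engel2} at $N=3$, a direct computation yields
\begin{equation*}
\Bigl|\frac{\widehat{R}(x)}{\widehat{T}(x)}\Bigr| \;\leq\; C\, e^{-c\,\widehat{\mu}(x)}
\end{equation*}
for explicit positive constants $C,c$. Since $\widehat{\mu}'(x)=\pi/(2\sqrt{x})$ and every differentiation of the dominant factor $e^{-c\widehat{\mu}(x)}$ contributes a factor of order $\widehat{\mu}(x)/x$, applying Faà di Bruno to $\log(1+u)$ with $u=\widehat{R}/\widehat{T}$ and then Leibniz to the product $\phi=x^{-1}\log(1+u)$ produces
\begin{equation*}
|\phi^{(r)}(\xi)| \;\leq\; C_r\,\widehat{\mu}(\xi)^{2r+2}\,e^{-c\,\widehat{\mu}(\xi)}
\end{equation*}
with an explicit constant $C_r$. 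This estimate has exactly the shape derived in the proof of \cite[Lemma 2.3]{Mukherjee}, whose template I would follow verbatim.

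It remains to absorb this bound into $n^{-r-3/2}$. The cutoff $N_1(r)=\max\{85,\lceil 4N_0(2r+2)^2/\pi^2\rceil\}$ is tailored so that $n \geq N_1(r)$ forces $\widehat{\mu}(n) \geq 2N_0(2r+2)$; because $N_0(m)=2m\log m - m\log\log m$ is the asymptotic inverse of $y \mapsto m\log y$, this guarantees $\widehat{\mu}(n)^{2r+2}\leq e^{c\,\widehat{\mu}(n)/2}$ at the stated cutoff. The leftover factor $e^{-c\,\widehat{\mu}(n)/2}$ decays faster than any inverse polynomial in $n$, which together with $\xi \geq n$ delivers $|\phi^{(r)}(\xi)| < n^{-r-3/2}$ for all $n \geq N_1(r)$.

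The main obstacle will be purely bookkeeping: keeping every combinatorial coefficient from Faà di Bruno and Leibniz explicit enough that $C_r$ is known precisely, and then verifying the absorption $\widehat{\mu}(n)^{2r+2}e^{-c\,\widehat{\mu}(n)/2}\leq 1$ at the specific cutoff $N_1(r)$ rather than only asymptotically. Since this parallels \cite[Lemma 2.3]{Mukherjee} almost step-for-step, no genuinely new estimate is required beyond careful tracking of constants.
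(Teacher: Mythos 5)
There is a genuine gap, and it sits at the very first step. Your reduction rests on the mean-value identity $\Delta^r \phi(n)=\phi^{(r)}(\xi_n)$ applied to $\phi(x)=\tfrac1x\log\bigl(1+\widehat{R}(x)/\widehat{T}(x)\bigr)$, but $\widehat{R}$ contains the Engel error term $R_2(\cdot,3)$, about which the only available information is the pointwise bound \eqref{Engel2} on its absolute value at integers. Nothing in \eqref{Engel1}--\eqref{Engel2} gives differentiability of $R_2(x,3)$ in a continuous variable, let alone bounds on its first $r$ derivatives, so the quantity $\phi^{(r)}(\xi)$ that your whole argument is built to estimate is not even defined (or at least not controlled) from the stated ingredients. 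Your subsequent Fa\`a di Bruno/Leibniz computation of $|\phi^{(r)}(\xi)|\le C_r\widehat{\mu}(\xi)^{2r+2}e^{-c\widehat{\mu}(\xi)}$ therefore cannot be carried out: it would require derivative bounds on $\widehat{R}/\widehat{T}$, not just size bounds. This is also a misattribution: \cite[Lemma 2.1]{GZZ} is not a mean-value statement; the mean-value-type tool in this paper is Odlyzko's Proposition \ref{Odprop}, and the decomposition \eqref{eqn4} into $H_{r,\alpha}(n)+G_r(n)$ exists precisely because that derivative machinery can only be applied to the smooth explicit part $H_{r,\alpha}$, while the error part $G_r$ must be handled differently.

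The route actually taken (and the one in \cite[Lemma 2.3]{Mukherjee} that you propose to follow) avoids derivatives of $G_r$ altogether: expand $\Delta^r$ by the binomial formula, so $|G_r(n)|\le \sum_{k=0}^{r}\binom{r}{k}\tfrac{1}{n+k}\bigl|\log\bigl(1+\widehat{R}(n+k)/\widehat{T}(n+k)\bigr)\bigr|\le \tfrac{2^r}{n}\max_k\bigl|\log(1+\widehat{R}/\widehat{T})\bigr|$, then use $|\log(1+u)|\le 2|u|$ for small $|u|$ together with \cite[Lemma 2.1]{GZZ}, which furnishes a bound of the shape $|\widehat{R}(n)/\widehat{T}(n)|\lesssim 2^{m}/\widehat{\mu}(n)^{m}$ valid once $\widehat{\mu}(n)\ge 2N_0(m)$; choosing $m=2r+2$ turns $\widehat{\mu}(n)^{-(2r+2)}$ into $\pi^{-(2r+2)}n^{-(r+1)}$, and the extra factor $1/n$ plus the spare half power of $n$ absorbs all constants for $n\ge 85$. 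That is exactly why $N_1(r)=\max\{85,\lceil \tfrac{4}{\pi^2}N_0(2r+2)^2\rceil\}$ appears: it encodes $\widehat{\mu}(n)\ge 2N_0(2r+2)$ as the hypothesis of the cited lemma, not an absorption $\widehat{\mu}^{2r+2}\le e^{c\widehat{\mu}/2}$ of your own making. To repair your write-up, replace the mean-value reduction by this triangle-inequality argument; once you do, the Fa\`a di Bruno bookkeeping you were worried about disappears entirely.
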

Before we proceed for derivation of upper and lower bound of $H_{r,\alpha}(n)$, let us state the following proposition due to Odlyzko \cite{Odlyzko}.
\begin{proposition}\label{Odprop}
	Let $r$ be a positive integer. Suppose that $f(t)$ is a function with continuous derivatives for $t\geq 1$, and $(-1)^{k-1}f^{(k)}(t)>0$ for $k \geq 1$. Then for $r\geq 1$, $x\geq 1$,
	$$(-1)^{r-1}f^{(r)}(x+r)\leq (-1)^{r-1}\Delta^r f(x) \leq (-1)^{r-1}f^{(r)}(x).$$	
\end{proposition}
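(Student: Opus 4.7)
The plan is to prove Proposition \ref{Odprop} by combining the standard integral representation of iterated forward differences with the monotonicity that the sign hypothesis forces on $(-1)^{r-1}f^{(r)}$. By the fundamental theorem of calculus, $\Delta f(x)=f(x+1)-f(x)=\int_{0}^{1}f'(x+t)\,dt$, and iterating this identity $r$ times gives
\begin{equation*}
\Delta^r f(x)=\int_{0}^{1}\!\!\cdots\!\!\int_{0}^{1} f^{(r)}(x+t_1+\cdots+t_r)\,dt_1\cdots dt_r,
\end{equation*}
which I would verify by induction on $r$, applying $\Delta^r f(x)=\Delta(\Delta^{r-1}f)(x)$ and interchanging the outer difference with the multiple integral, using that $f^{(r)}$ is continuous on $[1,\infty)$ so that Fubini applies.

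Next I would extract monotonicity from the sign hypothesis. Taking the hypothesis at $k=r$ gives $(-1)^{r-1}f^{(r)}(t)>0$, while taking it at $k=r+1$ gives $(-1)^{r}f^{(r+1)}(t)>0$, equivalently $(-1)^{r-1}f^{(r+1)}(t)<0$. Since
\begin{equation*}
\dfrac{d}{dt}\Bigl[(-1)^{r-1}f^{(r)}(t)\Bigr]=(-1)^{r-1}f^{(r+1)}(t)<0,
\end{equation*}
the auxiliary function $g(t):=(-1)^{r-1}f^{(r)}(t)$ is strictly positive and strictly decreasing on $[1,\infty)$.

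Multiplying the integral representation by $(-1)^{r-1}$ yields
\begin{equation*}
(-1)^{r-1}\Delta^r f(x)=\int_{[0,1]^r} g(x+t_1+\cdots+t_r)\,dt_1\cdots dt_r.
\end{equation*}
For $x\geq 1$ and $(t_1,\ldots,t_r)\in[0,1]^r$, the argument $x+t_1+\cdots+t_r$ lies in $[x,x+r]\subset[1,\infty)$, so $g$ is defined throughout and monotonicity gives the pointwise sandwich $g(x+r)\leq g(x+t_1+\cdots+t_r)\leq g(x)$. Integrating this against the uniform probability measure on $[0,1]^r$ (whose total mass is $1$) and rewriting $g$ in terms of $f^{(r)}$ reproduces exactly the two-sided bound in the statement.

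The argument is in essence a careful bookkeeping of signs, and I do not anticipate any genuine obstacle. The one slightly delicate point worth flagging is that deducing monotonicity of $g$ requires invoking the hypothesis one index beyond $r$, namely at $k=r+1$, and that the lower bound $x\geq 1$ is exactly what is needed so that the entire cube of integration maps into the half-line on which the hypothesis is assumed valid; both are built into the statement and are used precisely once.
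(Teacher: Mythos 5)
Your argument is correct. Note that the paper itself gives no proof of this proposition; it is stated as a known result and attributed to Odlyzko's paper on differences of the partition function. Your route—writing $\Delta^r f(x)$ as the integral of $f^{(r)}(x+t_1+\cdots+t_r)$ over the unit cube, then sandwiching the integrand using the positivity of $(-1)^{r-1}f^{(r)}$ at $k=r$ and its strict decrease coming from the hypothesis at $k=r+1$—is a clean and complete derivation, essentially an explicit form of the mean-value theorem for divided differences ($\Delta^r f(x)=f^{(r)}(\xi)$ for some $\xi\in(x,x+r)$) that the cited source relies on, and the two points you flag (the use of the hypothesis at index $r+1$ and the role of $x\geq 1$ in keeping the cube inside the domain) are exactly the right ones.
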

\begin{lemma}\label{lem2}
Let $S_r$ be as defined in Definition \ref{newdef1}. For $r \geq 2$ and $n \geq  \lceil e^{S_r/r!} \rceil$,
	\begin{equation}\label{lem2eqn1}
		L_{r,\alpha}(n)	< H_{r,\alpha}(n) <	U_{r,\alpha}(n) ,
	\end{equation}
	where
	\begin{equation}\label{neweqn1}
	U_{r,\alpha}(n):= \dfrac{C(r)}{n^{r+\frac{1}{2}}} -\sum_{k=1}^{\infty} \dfrac{2}{k^2 \pi^k} \Bigl(\dfrac{k}{2}\Bigr)_{r} \dfrac{1}{(n+r)^{\frac{k}{2}+r+1}}  - \dfrac{(\alpha+1) \bigl(r!\log(n+r)-S_r\bigr)}{(n+r)^{r+1}}-\dfrac{r!\log 8}{(n+r)^{r+1}},
	\end{equation}
	and
	\begin{equation}\label{neweqn2}
	L_{r,\alpha}(n):= \dfrac{C(r)}{(n+r)^{r+\frac{1}{2}}} -\sum_{k=1}^{\infty} \dfrac{2}{k^2 \pi^k} \Bigl(\dfrac{k}{2}\Bigr)_{r} \dfrac{1}{n^{\frac{k}{2}+r+1}}  - \dfrac{(\alpha+1) \bigl(r!\log n-S_r\bigr)}{n^{r+1}}-\dfrac{r!\log 8}{n^{r+1}}.
	\end{equation}
	%\begin{eqnarray}\label{neweqn1}
	%		U_{r,\alpha}(n) &:=& \dfrac{C(r)}{n^{r+\frac{1}{2}}} -\sum_{k=1}^{\infty} \dfrac{2}{k^2 \pi^k} \Bigl(\dfrac{k}{2}\Bigr)_{r} \dfrac{1}{(n+r)^{\frac{k}{2}+r+1}}  - \dfrac{(\alpha+1) (r!\log(n+r)-S_r)}{(n+r)^{r+1}} \nonumber  \\ & & \hspace{6 cm} -\dfrac{(r-1)!\log 8}{(n+r)^{r+1}} \text{,}\ 
	%\end{eqnarray}
%	\begin{eqnarray}\label{neweqn2}
%			\hspace*{-0.6cm}L_{r,\alpha}(n) &:=& \pi \Bigl(\dfrac{1}{2}\Bigr)_{r} \dfrac{1}{(n+r)^{r+\frac{1}{2}}} -\sum_{k=1}^{\infty} \dfrac{2}{k^2 \pi^k} \Bigl(\dfrac{k}{2}\Bigr)_{r} \dfrac{1}{n^{\frac{k}{2}+r+1}}  - \dfrac{(\alpha+1) (r!\log n-S_r)}{n^{r+1}} \nonumber  \\ & & \hspace{5 cm} -\dfrac{(r-1)!\log 8}{n^{r+1}}.\ 
%	\end{eqnarray}
%	with \begin{equation}\label{lem2eqn2}
%	C_0=0\ \ \text{and}\ \	S_r= r C_{r-1}+(r-1)!\ \ \text{for}\ r\geq 1.
%	\end{equation}
\end{lemma}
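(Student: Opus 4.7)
The plan is to split the operand of $(-1)^r\Delta^r$ in \eqref{eqn5} into four pieces, bound $(-1)^r\Delta^r$ of each using Proposition \ref{Odprop} (Odlyzko), and then sum. Writing $H_{r,\alpha}(n) = (-1)^r\Delta^r(f_1+f_2+f_3+f_4)(n)$ with
\[
f_1(n)=\pi n^{-1/2},\quad f_2(n)=-\tfrac{(\alpha+1)\log n}{n},\quad f_3(n)=\tfrac{\log(1-1/\widehat{\mu}(n))}{n},\quad f_4(n)=-\tfrac{\log 8}{n},
\]
I will apply Odlyzko's proposition to each $f_i$, or to $-f_i$ when the sign hypothesis $(-1)^{k-1}g^{(k)}>0$ requires it, and then add the resulting four sandwiches.

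The key derivative identities are: $(-1)^r f_1^{(r)}(n)=\pi(1/2)_r n^{-r-1/2}=C(r)n^{-r-1/2}$; for $f_4$, $(-1)^r f_4^{(r)}(n)=-r!\log 8\cdot n^{-r-1}$; and for $f_2$ I prove by induction the identity
\[
\tfrac{d^r}{dn^r}\!\Bigl(\tfrac{\log n}{n}\Bigr)=\tfrac{(-1)^r(r!\log n-S_r)}{n^{r+1}},
\]
using the recursion $S_r=rS_{r-1}+(r-1)!$ of Definition \ref{newdef1}, which matches $S_r=r!H_r$ with $H_r$ the harmonic number. For $f_3$, I expand via $\log(1-y)=-\sum_{k\ge 1}y^k/k$ with $y=1/\widehat{\mu}(n)$ to obtain $f_3(n)=-\sum_{k\ge 1}\frac{1}{k\pi^k}n^{-k/2-1}$, and then differentiate term by term, the interchange with $\Delta^r$ being justified by uniform convergence of the termwise-differentiated series on $[n,\infty)$ for $n$ beyond the stated threshold.

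Verifying the sign hypothesis $(-1)^{k-1}(\pm f_i)^{(k)}(n)>0$ for $k\ge 1$ is immediate for $f_1$, $f_3$, $f_4$ once we pass to the appropriate sign (e.g.\ $-f_1$ is positive, increasing and has derivatives alternating in sign). For $f_2$ one computes $(-1)^{r-1}f_2^{(r)}(n)=(\alpha+1)(r!\log n-S_r)/n^{r+1}$, which is positive precisely when $r!\log n>S_r$, i.e.\ $n>e^{S_r/r!}$; this is exactly the source of the cutoff $n\ge\lceil e^{S_r/r!}\rceil$ stated in the lemma. With the sign conditions in hand, Odlyzko yields, for each $f_i$, a two-sided estimate
\[
(-1)^{r-1}f_i^{(r)}(x_i+r)\le (-1)^{r-1}\Delta^r f_i(x_i)\le (-1)^{r-1}f_i^{(r)}(x_i),
\]
which, after multiplication by $-1$, becomes a sandwich on $(-1)^r\Delta^r f_i$ between evaluations at $n$ and $n+r$.

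Adding the four sandwiches, and sorting the four contributions into their lower and upper corners (the "small" one at $n+r$ for the $C(r)$ term, and at $n$ for the $\log$ and $\log 8$ terms, with the opposite configuration for the upper bound), reproduces the symmetric pair of expressions $L_{r,\alpha}(n)$ and $U_{r,\alpha}(n)$ displayed in \eqref{neweqn1} and \eqref{neweqn2}. The main technical obstacle is really just the combined task of establishing the closed-form identity for $\tfrac{d^r}{dn^r}(\log n/n)$ and pinpointing the cutoff $\lceil e^{S_r/r!}\rceil$ from the sign condition on $(-1)^{r-1}f_2^{(r)}$; the other $f_i$'s involve only power functions and routine derivative bookkeeping, and the termwise treatment of the series for $f_3$ is standard once convergence is checked.
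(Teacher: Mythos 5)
Your proposal follows essentially the same route as the paper: the paper likewise splits $H_{r,\alpha}(n)$ into the four summands $\pi n^{-1/2}$, $\frac{\log(1-1/\widehat{\mu}(n))}{n}$, $-\frac{\log 8}{n}$, $-\frac{(\alpha+1)\log n}{n}$, records the sign/closed form of their $r$-th derivatives (with the $S_r$ identity giving the cutoff $\lceil e^{S_r/r!}\rceil$), and applies Proposition \ref{Odprop} to each piece before summing the resulting two-sided bounds. Your write-up just makes explicit some details (the induction for $\frac{d^r}{dn^r}(\log n/n)$, the series expansion and sign checks) that the paper states without elaboration, so it is correct and matches the paper's argument.
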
 
\begin{proof}
	Rewrite \eqref{eqn5} as follows
	\begin{equation}\label{eqn6}
		H_{r,\alpha}(n)=(-1)^r \Delta^r \Biggl(\sum_{i=1}^{3} g_i(n)+g_{4,\alpha}(n)\Biggr),
	\end{equation} 
	where
$$g_1(n)=\dfrac{\widehat{\mu}(n)}{n},\ g_2(n)=\dfrac{\log (\widehat{\mu}(n)-1)-\log \widehat{\mu}(n)}{n},\ g_3(n)=-\dfrac{\log 8}{n},\ \text{and}\ \  g_{4,\alpha}(n)=-\dfrac{(\alpha +1) \log n}{n}.$$	
	%		\begin{eqnarray*}
%		g_1(n) &:=& \dfrac{\widehat{\mu}(n)}{n},\\
%		g_2(n)&:=& \dfrac{\log (\widehat{\mu}(n)-1)-\log \widehat{\mu}(n)}{n},\\
%		g_3(n)&:=& -\dfrac{(\alpha +1) \log n}{n},\\ \text{and} \ \
%		g_4(n)&:=& \dfrac{\log 8}{n}.
%	\end{eqnarray*}
Taking $r$-th derivative for each of the above functions, we have 
\begin{equation*}
\begin{split}
(-1)^{r-1} g_1^{(r)}(n) &:= -\pi \Bigl(\dfrac{1}{2}\Bigr)_r \dfrac{1}{n^{r+\frac{1}{2}}} <0\ \ \text{for all}\ \  n \geq 1,\\
(-1)^{r-1} g_2^{(r)}(n)&:= \sum_{k=1}^{\infty} \dfrac{2}{k^2 \pi^k} \Bigl(\dfrac{k}{2}\Bigr)_{r} \dfrac{1}{n^{\frac{k}{2}+r+1}} >0\ \ \text{for all}\ \  n \geq 1,\\
(-1)^{r-1} g_3^{(r)}(n)&:=\dfrac{r!\log 8}{n^{r+1}} >0\ \ \text{for all}\ \ n \geq 1,\\
\text{and}\ \ 
(-1)^{r-1} g_{4,\alpha}^{(r)}(n)&:= \dfrac{(\alpha+1) (r!\log n-S_r)}{n^{r+1}} >0\ \ \text{for all}\ \ n \geq  \lceil e^{S_r/r!} \rceil.
\end{split}
\end{equation*}
Now applying Proposition \ref{Odprop} individually on each of the factors $\{g_i(n)\}_{1\leq i \leq 3}$ and $g_{4,\alpha}(n)$, we immediately arrive at \eqref{lem2eqn1}.
\end{proof}
\subsection{Proof of Theorem \ref{mainresult2}}\label{prof1}
From Lemmas \ref{lemfirst} and \ref{lem2}, for all $n \geq \max\Bigl\{N_1(r), \Bigl\lceil e^{S_r/r!} \Bigr\rceil \Bigr\}$, it follows that
\begin{equation*}
(-1)^r \Delta^{r} \log r_{\alpha}(n)<U_{r,\alpha}(n)+\dfrac{1}{n^{r+\frac{3}{2}}}.
\end{equation*}
% \begin{eqnarray}
%	(-1)^r \Delta^{r} \log r_{\alpha}(n) &\leq&  \dfrac{C(r)}{n^{r+\frac{1}{2}}} -\sum_{k=1}^{\infty} \dfrac{2}{k^2 \pi^k} \Bigl(\dfrac{k}{2}\Bigr)_{r} \dfrac{1}{(n+r)^{\frac{k}{2}+r+1}} \nonumber \\ \nonumber  \\ & &  - \dfrac{(\alpha+1) (r!\log(n+r)-S_r)}{(n+r)^{r+1}}  -\dfrac{(r-1)!\log 8}{(n+r)^{r+1}} + \dfrac{1}{n^{r+\frac{3}{2}}}. \hspace*{2.5cm}\label{eqn7} \nonumber
%\end{eqnarray} 
Since for all $n \geq \max \Bigl\{r, \Bigl\lceil e^{ S_r/r!} \Bigr\rceil \Bigr\}$,
\begin{equation*}
\dfrac{(\alpha+1)(r!\log(n+r)-S_r)}{(n+r)^{r+1}} > 0\ \ \text{and}\ \ \sum_{k=1}^{\infty} \dfrac{2}{k^2 \pi^k} \Bigl(\dfrac{k}{2}\Bigr)_{r} \dfrac{1}{(n+r)^{\frac{k}{2}+r+1}} > 0,
\end{equation*}   
we have for all $n \geq \max\Bigl\{N_1(r), r, \Bigl\lceil e^{S_r/r!}\Bigr \rceil \Bigr\}$,
\begin{equation}\label{eqn8}
	(-1)^r \Delta^r \log r_{\alpha}(n)  <  \dfrac{C(r)}{n^{r+\frac{1}{2}}}  -\dfrac{r!\log 8}{(n+r)^{r+1}} + \dfrac{1}{n^{r+\frac{3}{2}}}. 
\end{equation}Recall that 
$N_2(r)=\max \Biggl\{\Biggl \lceil 2 \Bigl(C(r) \Bigr)^{\frac{2}{r}} \Biggr \rceil, \Biggl \lceil \Biggl( \dfrac{2^{r+1}}{r! \log 8} \Biggr)^2 \Biggr \rceil \Biggr\} \hspace*{0.2cm} \text{(cf. \eqref{def6})}.\\$ \\
We aim to show for all $n \geq 	N_2(r),$
\begin{equation}\label{eqn9}
	-\dfrac{r!\log 8}{(n+r)^{r+1}} + \dfrac{1}{n^{r+\frac{3}{2}}} <-\dfrac{1}{2} \Biggl(\dfrac{C(r)}{n^{r+\frac{1}{2}}}\Biggr)^2,
\end{equation}	
which is equivalent to prove 
\begin{equation*}
	\Biggl( \sqrt{n} r! \log 8 - 2^{r+1}\Biggr)n^r > 2^r C(r)^2.
\end{equation*}
Note that for all $n> \Bigl \lceil 2 \Bigl(C(r) \Bigr)^{\frac{2}{r}} \Bigr \rceil$,
\begin{equation}\label{eqn10}
	n^r > 2^r C(r)^2,
\end{equation}  
and 
\begin{equation}\label{eqn11}
	\Biggl( \sqrt{n} r! \log 8 - 2^{r+1}\Biggr)n^r>n^r \ \  \text{for all}\ \  n > \Biggl \lceil \Biggl( \dfrac{2^{r+1}}{r! \log 8} \Biggr)^2 \Biggr \rceil.
\end{equation}
Thus from \eqref{eqn10} and \eqref{eqn11}, we get \eqref{eqn9}.
Since for all positive integers $n$ and $r$, $C(r)>0$, summerizing \eqref{eqn8} and \eqref{eqn9}, we have for all $n \geq \max\Bigl\{N_1(r), \lceil e^{S_r/r!} \rceil, N_2(r)\Bigr\}$,
\begin{equation}\label{eqn12}
	(-1)^r \Delta^r \log r_{\alpha}(n) < \log \Biggl(1+ \dfrac{C(r)}{n^{r+\frac{1}{2}}}\Biggr).
\end{equation}
We further move on to show the lower bound given in Theorem \ref{mainresult2}.
From \eqref{lem1eqn1} and \eqref{lem2eqn1}, for all $n \geq \max\Bigl\{N_1(r), \Bigl\lceil e^{S_r/r!} \Bigr\rceil \Bigr\}$, we have
\begin{equation}\label{eqn13}
(-1)^r \Delta^{r} \log r_{\alpha}(n)>L_{r,\alpha}(n)-\dfrac{1}{n^{r+\frac{3}{2}}}.
\end{equation}
%\begin{eqnarray}
%	(-1)^r \Delta^{r} \log r_{\alpha}(n) &\geq&  \dfrac{C(r)}{(n+r)^{r+\frac{1}{2}}} -\sum_{k=1}^{\infty} \dfrac{2}{k^2 \pi^k} \Bigl(\dfrac{k}{2}\Bigr)_{r} \dfrac{1}{n^{\frac{k}{2}+r+1}} \nonumber \\ \nonumber  \\ & &  - \dfrac{(\alpha+1) (r!\log n-S_r)}{n^{r+1}}  -\dfrac{(r-1)!\log 8}{n^{r+1}} - \dfrac{1}{n^{r+\frac{3}{2}}}. \hspace*{2.5cm}\label{eqn13} 
%\end{eqnarray} 
After rewriting the first factor in $L_{r,\alpha}(n)$ (cf. see \eqref{neweqn2}), we get
\begin{equation}\label{eqn14}
\dfrac{C(r)}{(n+r)^{r+\frac{1}{2}}}=\dfrac{C(r)}{n^{r+\frac{1}{2}}} \Biggl(1+\dfrac{r}{n}\Biggr)^{-(r+\frac{1}{2})}=\dfrac{C(r)}{n^{r+\frac{1}{2}}}+  \dfrac{C(r)}{n^{r+\frac{1}{2}}} \Biggl(\sum_{k=1}^{\infty} \binom{-\frac{2r+1}{2}}{k} \Bigl(\dfrac{r}{n}\Bigr)^k\Biggr).
\end{equation}
 %\begin{eqnarray}
%	\dfrac{C(r)}{(n+r)^{r+\frac{1}{2}}} & = &  \dfrac{C(r)}{n^{r+\frac{1}{2}}} \Biggl(1+\dfrac{r}{n}\Biggr)^{-(r+\frac{1}{2})} \nonumber \\ &=&  \dfrac{C(r)}{n^{r+\frac{1}{2}}}+  \dfrac{C(r)}{n^{r+\frac{1}{2}}} \Biggl(\sum_{k=1}^{\infty} \binom{-\frac{2r+1}{2}}{k} \Bigl(\dfrac{r}{n}\Bigr)^k\Biggr). \label{eqn14}  
%\end{eqnarray} 
Clearly for all $(r,k)\in \mathbb{Z}_{\geq 1}\times \mathbb{Z}_{\geq 1}$, it follows that 
\begin{equation*}
\dfrac{4^r}{2 \sqrt{r}} \leq \binom{2r}{r} \leq \dfrac{4^r}{\sqrt{\pi r}},\ \binom{r+k+1}{r} \leq {(2r)}^{k+1},\ \text{and}\ \ \frac{2}{\sqrt{\pi}} \sqrt{\dfrac{r}{r+k}}<2.
\end{equation*}
%$$\dfrac{4^r}{2 \sqrt{r}} \leq \binom{2r}{r} \leq \dfrac{4^r}{\sqrt{\pi r}} \hspace*{0.5cm} \text{for all} \hspace*{0.2cm} r \geq 1,$$
%$$\binom{r+k+1}{r} \leq {(2r)}^{k+1} \hspace*{0.5cm} \text{for all} \hspace*{0.2cm} r \geq 1 \hspace*{0.2cm} \text{and} \hspace*{0.2cm} k\geq 1,$$
%$$\frac{2}{\sqrt{\pi}} \sqrt{\dfrac{r}{r+k}}<2 \hspace*{0.5cm} \text{for all} \hspace*{0.2cm} r \geq 1 \hspace*{0.2cm} \text{and} \hspace*{0.2cm} k \geq 1,$$ 
Following a similar methodology devised in \cite[equation (2.21)]{Mukherjee}, we have for all $n \geq 4r^2$,
\begin{equation}\label{eqn15}
	\Biggl| \sum_{k=1}^{\infty} \binom{-\frac{2r+1}{2}}{k} \Bigl(\dfrac{r}{n}\Bigr)^k \Biggr| \leq \dfrac{4r^2}{n}.
\end{equation} 
Applying \eqref{eqn15} on \eqref{eqn14} we get for all $n \geq 4r^2$,
\begin{equation}\label{eqn16}
	\dfrac{C(r)}{(n+r)^{r+\frac{1}{2}}} > \dfrac{C(r)}{n^{r+\frac{1}{2}}}-\dfrac{4r^2C(r)}{n^{r+\frac{3}{2}}}. 
\end{equation}
To estimate the infinite series in $L_{r,\alpha}(n)$ (cf. \eqref{neweqn2}), we first split it as
\begin{eqnarray}
	\sum_{k=1}^{\infty} \dfrac{2}{k^2 \pi^k} \Bigl(\dfrac{k}{2}\Bigr)_{r} \dfrac{1}{n^{\frac{k}{2}+r+1}} &=& \dfrac{1}{n^{r+\frac{3}{2}}} \sum_{k=0}^{2r-2} \dfrac{2}{(k+1)^2 \pi^{k+1}} \Bigl(\dfrac{k+1}{2}\Bigr)_{r} \dfrac{1}{n^{\frac{k}{2}}} \nonumber +  \dfrac{1}{n^{r+\frac{3}{2}}} \sum_{k=2r}^{\infty} \dfrac{2}{k^2 \pi^{k}} \Bigl(\dfrac{k}{2}\Bigr)_{r} \dfrac{1}{n^{\frac{k-1}{2}}} \nonumber \\ & \underset{\forall n \geq r^2}{\leq} & \dfrac{1}{n^{r+\frac{3}{2}}}\underset{:=\widehat{C_2}(r)}{\underbrace{\sum_{k=0}^{2r-2} \dfrac{2}{(k+1)^2 \pi^{k+1}} \Bigl(\dfrac{k+1}{2}\Bigr)_{r} \dfrac{1}{r^k}}} +  \dfrac{r}{n^{r+\frac{3}{2}}} \underset{:=\widehat{S}(r)}{\underbrace{\sum_{k=2r}^{\infty} \dfrac{2}{k^2 \pi^{k}} \Bigl(\dfrac{k}{2}\Bigr)_{r} \dfrac{1}{r^{k}}}}.  \nonumber
\end{eqnarray}
In a similar fashion as carried out in \cite[equation (2.27)]{Mukherjee}, for all $n \geq r^2$, $\widehat{S}(r) < \dfrac{2}{10^r},$
and therefore, we have \begin{equation}\label{eqn17}
	\sum_{k=1}^{\infty} \dfrac{2}{k^2 \pi^k} \Bigl(\dfrac{k}{2}\Bigr)_{r} \dfrac{1}{n^{\frac{k}{2}+r+1}} < \dfrac{C_2(r)}{n^{r+\frac{3}{2}}}. 
\end{equation} 
Applying \eqref{eqn16} and \eqref{eqn17} to \eqref{eqn13}, we arrive at
\begingroup
\allowdisplaybreaks
\begin{eqnarray}\label{neweqn3}
	(-1)^r \Delta^{r} \log r_{\alpha}(n) &>&  \dfrac{C(r)}{n^{r+\frac{1}{2}}} -\dfrac{(\alpha+1) (r!\log n-S_r)}{n^{r+1}}  -\dfrac{r!\log 8}{n^{r+1}}   - \dfrac{4r^2 C(r)+C_2(r)+1}{n^{r+\frac{3}{2}}} \nonumber \\ & > &  \dfrac{C(r)}{n^{r+\frac{1}{2}}} -\dfrac{(\alpha+1) r!}{n^{r+\frac{3}{4}}}+\dfrac{S_r (\alpha +1)}{n^{r+1}}  -\dfrac{r!\log 8}{n^{r+1}}   - \dfrac{4r^2 C(r)+C_2(r)+1}{n^{r+\frac{3}{2}}}\nonumber \\ & & \hspace*{4cm}\Biggl(\text{since} \hspace*{0.2cm} \log n < n^{\frac{1}{4}} \hspace*{0.2cm} \text{for all} \hspace*{0.2cm} n \geq 5505\Biggr) \nonumber \\ & > & \dfrac{C(r)}{n^{r+\frac{1}{2}}} -\dfrac{C(r,\alpha)}{n^{r+\frac{3}{4}}} \nonumber \\ & & \Biggl(\text{Since} \hspace*{0.2cm} -\dfrac{4r^2 C(r)+C_2(r)}{n^{r+\frac{3}{2}}}> -\dfrac{1}{n^{r+\frac{3}{4}}} \hspace*{0.2cm} \text{for all} \hspace*{0.2cm} n \geq \Biggl \lceil (4r^2 C(r)+C_2(r))^{\frac{4}{3}} \Biggr \rceil \nonumber \\ & & \text{and} \hspace*{0.2cm} \dfrac{S_r(\alpha +1)}{n^{r+1}}-\dfrac{1}{n^{r+\frac{3}{2}}}>0 \hspace*{0.2cm} \text{for all} \hspace*{0.2cm} n \geq \Biggl \lceil \dfrac{1}{(S_r(\alpha+1))^2} \Biggr \rceil \Biggr) \nonumber \\ & > & \log \Biggl(1+\dfrac{C(r)}{n^{r+\frac{1}{2}}} -\dfrac{C(r,\alpha)}{n^{r+\frac{3}{4}}}\Biggr) \nonumber \\ & & \Biggl(\text{since} \hspace*{0.2cm} \dfrac{C(r)}{n^{r+\frac{1}{2}}} -\dfrac{C(r,\alpha)}{n^{r+\frac{3}{4}}}>0 \hspace*{0.2cm} \text{for all} \hspace*{0.2cm} n \geq \Biggl \lceil \Biggl(\dfrac{C(r,\alpha)}{C(r)}\Biggr)^4 \Biggr \rceil \Biggr). \\ \nonumber
\end{eqnarray}
\endgroup 
Recall that \begin{equation*}
	N_3(r, \alpha)= \max\Biggl\{5505, 4r^2, \Biggl \lceil (4r^2C(r)+C_2(r))^{\frac{4}{3}} \Biggr \rceil, \Biggl \lceil \Biggl(\dfrac{C(r,\alpha)}{C(r)}\Biggr)^4 \Biggr \rceil \Biggr\}  \text{(cf. \eqref{def7})}.
\end{equation*} 
From \eqref{neweqn3}, we have for all $n \geq \max\Bigl\{N_1(r), \Bigl\lceil e^{S_r/r!}\Bigr \rceil, N_3(r, \alpha)\Bigr\}$, 
\begin{equation}\label{eqn18}
	(-1)^r \Delta^{r} \log r_{\alpha}(n)>\log \Biggl(1+\dfrac{C(r)}{n^{r+\frac{1}{2}}} -\dfrac{C(r,\alpha)}{n^{r+\frac{3}{4}}}\Biggr).
\end{equation}
In view of \eqref{eqn12} and \eqref{eqn18}, for all $n \geq N(r, \alpha)$, the required estimates for $(-1)^r {\Delta}^r \log r_{\alpha}(n)$ as stated in \eqref{mainresult2eqn} has been proved.

\begin{remark}\label{newremark1}
Here we omit the proof of an inequality for $-\Delta \log r_{\alpha}(n)$ (the case $r=1$) due to its similarity with the proof of \cite[Theorem 1.6]{Mukherjee}. Applying equations \eqref{eqn5}-\eqref{eqn8new} with $r=1$ and Lemma \ref{lemfirst}, one can obtain an inequality similar to Theorem \ref{mainresult2}. 
\end{remark}
\section{Reverse higher order Tur\'an inequalities for $r_{\alpha}(n)$}\label{sec2}
Define $u_{\alpha}(n):=\dfrac{r_{\alpha}(n-1)r_{\alpha}(n+1)}{r_{\alpha}(n)^2}$.\subsection{Upper bound of $u_{\alpha}(n)$}\label{subsecmain1}
\begin{lemma}\label{lem3}
	For $n \geq 1$, 
	\begin{equation}\label{lem3eqn31}
		s_{+}(n)-\dfrac{120}{n^5}<\sqrt[n+1]{\dfrac{\overline{p}(n+1)}{\overline{p}(n)}}<s_{+}(n)+\dfrac{850}{n^5},
	\end{equation}
	where $s_+(n):=\displaystyle\sum_{i=0}^{9}\frac{a_i}{\sqrt{n}^i}$
	with
	\begin{equation*}
	\begin{split}
	&a_0=1, a_1=0, a_2=0, a_3=\dfrac{\pi}{2}, a_4=-1, a_5=\dfrac{4-5\pi^2}{8\pi}, a_6=\dfrac{4+12\pi^2+\pi^4}{8\pi^2}, a_7=\dfrac{8-14\pi^2+3\pi^4}{16\pi^3}\\
	&a_8=\dfrac{24-48\pi^2-52\pi^4-15\pi^6}{48\pi^4}, a_9=\dfrac{192-432\pi^2+360\pi^4+249\pi^6+8\pi^8}{384\pi^5}.
	\end{split}
	\end{equation*}
\end{lemma}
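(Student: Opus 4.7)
The plan is to extract both bounds from an explicit asymptotic expansion of $\sqrt[n+1]{\overline{p}(n+1)/\overline{p}(n)}$, using Engel's version of the Zuckerman formula (i.e.\ \eqref{eqn1}--\eqref{eqn3}) as the starting point. A key simplification is that the ``error'' factor $1 + \widehat{R}(n)/\widehat{T}(n)$ is within $e^{-c\sqrt{n}}$ of $1$ by \eqref{Engel2}, so its contribution after taking logarithms, dividing by $n+1$, and exponentiating is far smaller than $1/n^5$ for $n$ sufficiently large; it can be absorbed into the final error constants, with direct numerical verification for the finitely many small $n$.

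After discarding that subdominant piece, the core of the argument is to expand
\begin{equation*}
\log\frac{\widehat{T}(n+1)}{\widehat{T}(n)} \;=\; \pi\bigl(\sqrt{n+1}-\sqrt{n}\bigr) \;-\; \log\!\Bigl(1+\tfrac{1}{n}\Bigr) \;+\; \log\frac{1-1/(\pi\sqrt{n+1})}{1-1/(\pi\sqrt{n})}
\end{equation*}
as a power series in $1/\sqrt{n}$ to order $1/n^{11/2}$, using the standard Taylor expansions of $\sqrt{1+x}$, $\log(1+x)$, and $\log(1-x)$. Dividing by $n+1$ via $(n+1)^{-1} = n^{-1}\sum_{k\ge 0}(-1)^k n^{-k}$ and exponentiating via $e^x = \sum_{k\ge 0} x^k/k!$, then collecting powers of $1/\sqrt{n}$ and truncating at $1/n^{9/2}$, should reproduce exactly the polynomial $s_{+}(n) = \sum_{i=0}^{9} a_i / \sqrt{n}^i$ with the stated coefficients.

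The error analysis is where the asymmetric bounds $-120/n^5$ and $+850/n^5$ are established. For each Taylor series used above, I would apply a Lagrange-form remainder estimate valid for $n \geq 1$; these combine, together with the exponentially small correction coming from $\widehat{R}/\widehat{T}$, into an explicit upper envelope of the form $C/n^5$ on the total error, with separate constants for the positive and negative tails. The asymmetry between $120$ and $850$ comes from the sign and magnitude of the next-order coefficient $a_{10}/\sqrt{n}^{10}$: it pushes the value of $\sqrt[n+1]{\overline{p}(n+1)/\overline{p}(n)}$ strictly above $s_{+}(n)$ and hence loads the positive side of the error with a ``signed'' contribution that is absent on the negative side.

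The main obstacle is the lengthy bookkeeping: four power series (from $\sqrt{n+1}-\sqrt{n}$, the two $\log(1-1/(\pi\sqrt{\cdot}))$ terms, and $\log(1+1/n)$) interact multiplicatively through division by $n+1$ and composition with $\exp$, and all intermediate coefficients must be tracked symbolically to confirm both that $a_0,\ldots,a_9$ come out as stated and that the residual is uniformly bounded by $+850/n^5$ from above and $-120/n^5$ from below for \emph{every} $n \geq 1$. This strongly suggests a computer-algebra check for the symbolic coefficient computation, followed by direct numerical verification of \eqref{lem3eqn31} for those small $n$ where the Taylor-tail estimates are not yet sharp enough to close the argument on their own.
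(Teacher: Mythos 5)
Your proposal follows essentially the same route as the paper: the paper likewise expands $\bigl(\widehat{T}(n+1)/\widehat{T}(n)\bigr)^{1/(n+1)}$, factored as $\bigl(\tfrac{\widehat{\mu}(n+1)-1}{\widehat{\mu}(n)-1}\bigr)^{1/(n+1)}\bigl(\tfrac{\widehat{\mu}(n)}{\widehat{\mu}(n+1)}\bigr)^{3/(n+1)}e^{(\widehat{\mu}(n+1)-\widehat{\mu}(n))/(n+1)}$, in powers of $n^{-1/2}$ with explicit $\pm c/n^{5}$ remainders for each factor, treats the Rademacher-type error via a packaged two-sided bound from \cite[eqn.\ (2.15)]{GZZ} (so it enters only as factors $1\pm O(n^{-5})$, rather than your direct exponential estimate on $\widehat{R}/\widehat{T}$), and verifies the range $1\le n\le 1296$ numerically exactly as you anticipate. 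The only quibble is your attribution of the $120$ versus $850$ asymmetry to the sign of a next-order coefficient; in the paper it simply reflects the asymmetric constants accumulated from the individual factorwise remainders.
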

	\begin{proof}
		Applying \cite[eqn. (2.15)]{GZZ} with $m=10$, for all $n \geq 1297$, it follows that
		\begin{equation}\label{eqn20*}
			\Biggl(\frac{\widehat{T}(n+1)}{\widehat{T}(n)}\Biggr)^{\frac{1}{n+1}} \Biggl(1-4\frac{2^{10}}{{\widehat{\mu}(n)}^{10}}\Biggr)^{\frac{1}{n+1}}<\sqrt[n+1]{\dfrac{\overline{p}(n+1)}{\overline{p}(n)}}<\Biggl(\frac{\widehat{T}(n+1)}{\widehat{T}(n)}\Biggr)^{\frac{1}{n+1}} \Biggl(1+6\frac{2^{10}}{{\widehat{\mu}(n)}^{10}}\Biggr)^{\frac{1}{n+1}}.
		\end{equation}
		Using the decomposition for $\overline{p}(n)$ (cf. \eqref{eqn1}), we get
		\begin{equation*}
			\Biggl(\frac{\widehat{T}(n+1)}{\widehat{T}(n)}\Biggr)^{\frac{1}{n+1}}=\Biggl(\frac{\widehat{\mu}(n+1)-1}{\widehat{\mu}(n)-1}\Biggr)^{\frac{1}{n+1}} \cdot\Biggl(\frac{\widehat{\mu}(n)}{\widehat{\mu}(n+1)}\Biggr)^{\frac{3}{n+1}}\cdot e^{\frac{\widehat{\mu}(n+1)-\widehat{\mu}(n)}{n+1}}.
		\end{equation*}
	By Taylor expansion, we obtain for $n \geq 2$,
		\begin{equation}\label{eqn20}
			s^{(1)}_+(n)-\dfrac{20}{n^5}<\Biggl(\frac{\widehat{\mu}(n)}{\widehat{\mu}(n+1)}\Biggr)^{\frac{3}{n+1}}<s^{(1)}_+(n)+\dfrac{40}{n^5},
		\end{equation}
		where
		$$s^{(1)}_+(n):=1-\dfrac{3}{2n^2}+\dfrac{9}{4n^3}-\dfrac{13}{8n^4}.$$
	We decomposing the first factor of the ratio $\Bigl(\frac{\overline{T}(n+1)}{\overline{T}(n)}\Bigr)^{\frac{1}{n+1}}$ as
		\begin{equation*}
			\Biggl(\frac{\widehat{\mu}(n+1)-1}{\widehat{\mu}(n)-1}\Biggr)^{\frac{1}{n+1}}=\Biggl(1+\frac{1}{n}\Biggr)^{\frac{1}{2(n+1)}}\cdot\Biggl(1-\frac{1}{\widehat{\mu}(n+1)}\Biggr)^{\frac{1}{n+1}}\cdot\Biggl(1-\frac{1}{\widehat{\mu}(n)}\Biggr)^{-\frac{1}{n+1}},
		\end{equation*}
		and apply Taylor expansion on each of these three factors. We get for $n \geq 2$,
		\begin{equation}\label{eqn21}
		s^{(2,1)}_+(n)-\dfrac{7}{n^5}<	\Biggl(1+\frac{1}{n}\Biggr)^{\frac{1}{2(n+1)}}<s^{(2,1)}_+(n)+\dfrac{5}{n^5},
		\end{equation}
		where $$s^{(2,1)}_+(n):=1+\dfrac{1}{2n^2}-\dfrac{3}{4n^3}+\dfrac{25}{24n^4}.$$
	Note that for $n \geq 2$,
		\begin{equation}\label{eqn22}
		s^{(2,2)}_+(n)-\dfrac{15}{n^5}<	\Biggl(1-\frac{1}{\widehat{\mu}(n+1)}\Biggr)^{\frac{1}{n+1}}<s^{(2,2)}_+(n)+\dfrac{15}{n^5},
		\end{equation}
		where \begin{eqnarray}
			s^{(2,2)}_+(n)&:=&1-\dfrac{1}{\pi n^{3/2}}-\dfrac{1}{2 \pi^2 n^2}+\Bigl(-\frac{1}{3 \pi^3}+\frac{3}{2\pi}\Bigr)\dfrac{1}{n^{5/2}}+\Bigl(-\frac{1}{4 \pi^4}+\frac{3}{2\pi^2}\Bigr)\dfrac{1}{n^{3}}+\Bigl(-\frac{1}{5 \pi^5}+\frac{4}{3\pi^3}\nonumber \\  & &-\frac{15}{8\pi}\Bigr)\dfrac{1}{n^{7/2}} +\Bigl(-\frac{1}{6 \pi^6}+\frac{29}{24\pi^4}-\frac{3}{\pi^2}\Bigr)\dfrac{1}{n^{4}} +\Bigl(-\frac{1}{7 \pi^7} +\frac{67}{60\pi^5}-\frac{27}{8\pi^3}+\frac{35}{16 \pi}\Bigr)\dfrac{1}{n^{9/2}}. \nonumber 
		\end{eqnarray}
		It is clear that $n \geq 2$,
		\begin{equation}\label{eqn23}
		s^{(2,3)}_+(n)-\dfrac{4}{n^5}<	\Biggl(1-\frac{1}{\widehat{\mu}(n)}\Biggr)^{-\frac{1}{n+1}}<s^{(2,3)}_+(n)+\dfrac{4}{n^5},
		\end{equation}
		where \begin{eqnarray}
			s^{(2,3)}_+(n)&:=&1+\dfrac{1}{\pi n^{3/2}}+\dfrac{1}{2 \pi^2 n^2}+\Bigl(\frac{1}{3 \pi^3}-\frac{1}{\pi}\Bigr)\dfrac{1}{n^{5/2}}+\frac{1}{4 \pi^4 n^{3}}+\Bigl(\frac{1}{5 \pi^5}+\frac{1}{6\pi^3}+\frac{1}{\pi}\Bigr)\dfrac{1}{n^{7/2}}\nonumber \\  & & +\Bigl(\frac{1}{6 \pi^6}+\frac{5}{24\pi^4}-\frac{1}{2\pi^2}\Bigr)\dfrac{1}{n^{4}} +\Bigl(\frac{1}{7 \pi^7} +\frac{13}{60\pi^5}-\frac{1}{2\pi^3}-\frac{1}{\pi}\Bigr)\dfrac{1}{n^{9/2}}. \nonumber 
		\end{eqnarray}
		Summerizing \eqref{eqn21}, \eqref{eqn22}, and \eqref{eqn23}, for $n \geq 2$, we have
		\begin{equation}\label{eqn24}
		s^{(2)}_+(n)-\dfrac{40}{n^5}<	\Biggl(\frac{\widehat{\mu}(n+1)-1}{\widehat{\mu}(n)-1}\Biggr)^{\frac{1}{n+1}}<s^{(2)}_+(n)+\dfrac{35}{n^5},
		\end{equation}
		where \begin{eqnarray}
			s^{(2)}_+(n)&:=&1+\dfrac{1}{2 n^2}+\frac{1}{2\pi n^{5/2}}+\Biggl(-\frac{3}{4}+\frac{1}{2 \pi^2}\Biggr)\dfrac{1}{ n^{3}}+\Bigl(\frac{4}{8\pi^3}-\frac{7}{8\pi}\Bigr)\dfrac{1}{n^{7/2}} \nonumber \\  & &+\Bigl(\frac{25}{24}+\frac{1}{2\pi^4}-\frac{1}{\pi^2}\Bigr)\dfrac{1}{n^{4}}+\Bigl(\frac{1}{2\pi^5}-\frac{9}{8\pi^3}+\frac{23}{16\pi}\Bigr)\dfrac{1}{n^{9/2}}.\nonumber
		\end{eqnarray}
		To estimate upper bound of $e^{\frac{\widehat{\mu}(n+1)-\widehat{\mu}(n)}{n+1}}$, first we provide an upper bound of $\frac{\widehat{\mu}(n+1)-\widehat{\mu}(n)}{n+1}$. In a similar framework as done before, we obtain for $n \geq 2$,
		\begin{equation}\label{eqn25}
		s^{(3,1)}_+(n)-\dfrac{2}{n^5}<	\frac{\widehat{\mu}(n+1)-\widehat{\mu}(n)}{n+1}<s^{(3,1)}_+(n)+\dfrac{2}{n^5},
		\end{equation}
		where \begin{equation*}
			s^{(3,1)}_+(n):=\dfrac{\pi}{2n^{3/2}}-\dfrac{5\pi}{8n^{5/2}}+\dfrac{11\pi}{16 n^{7/2}}-\dfrac{93\pi}{128n^{9/2}}.
		\end{equation*}
		It can be easily checked that for $n \geq 2$, $$e^{\frac{2}{ n^5}}< 1+\frac{4}{n^5}$$ and $$e^{-\frac{2}{ n^5}}> 1-\frac{4}{n^5}.$$ 
	Observe that for $n \geq 2$,
		\begin{equation}\label{eqn26}
		s^{(3)}_+(n)-\frac{30}{n^5}<	e^{\frac{\widehat{\mu}(n+1)-\widehat{\mu}(n)}{n+1}}<s^{(3)}_+(n)+\frac{120}{n^5},
		\end{equation}
		where \begin{eqnarray}
			s^{(3)}_+(n)&:=&1+\dfrac{\pi}{2n^{3/2}}-\dfrac{5\pi}{8n^{5/2}}+\frac{\pi^2}{8n^{3}}+\dfrac{11\pi}{16n^{7/2}}-\dfrac{5\pi^2}{16n^4}+\Bigl(-\frac{93\pi}{128}+\frac{\pi^3}{48}\Bigr)\dfrac{1}{n^{9/2}}.\nonumber
		\end{eqnarray}
		Therefore, from \eqref{eqn20}, \eqref{eqn24}, and \eqref{eqn26}, for $n \geq 2$, it gives
		\begin{equation}\label{eqn27}
		UT_-(n)	<\Biggl(\frac{\widehat{T}(n+1)}{\widehat{T}(n)}\Biggr)^{\frac{1}{n+1}}<UT_+(n),
		\end{equation}
		where \begin{equation*}
			UT_+(n):=\Bigl(s^{(1)}_+(n)+\frac{40}{n^5}\Bigr)\Bigl(s^{(2)}_+(n)+\frac{35}{n^5}\Bigr)\Bigl(s^{(3)}_+(n)+\frac{120}{n^5}\Bigr)	
		\end{equation*}
	and \begin{equation*}
		\hspace*{-0.2cm}UT_-(n):=\Bigl(s^{(1)}_+(n)-\frac{20}{n^5}\Bigr)\Bigl(s^{(2)}_+(n)-\frac{40}{n^5}\Bigr)\Bigl(s^{(3)}_+(n)-\frac{30}{n^5}\Bigr).	
	\end{equation*}
		For the remaining part in the upper bound of \eqref{eqn20*}, we get for $n \geq 2$,
		\begin{equation}\label{eqn28}
			\Biggl(1+\dfrac{6\cdot2^{10}} {\widehat{\mu}(n)^{10}}\Biggr)^{\frac{1}{n+1}}<1+\dfrac{4}{n^5},
		\end{equation} and \begin{equation}\label{eqn28'}
		\Biggl(1-\dfrac{4\cdot2^{10}} {\widehat{\mu}(n)^{10}}\Biggr)^{\frac{1}{n+1}}>1-\dfrac{2}{n^5}.
	\end{equation}
 Equations \eqref{eqn27}, \eqref{eqn28} and \eqref{eqn28'} together imply the upper bound of $\sqrt[n+1]{\frac{\overline{p}(n+1)}{\overline{p}(n)}}$ for all $n \geq 1297$, stated in \eqref{lem3eqn31}. For the remaining cases $1\leq n \leq 1296$, one can numerically verify \eqref{lem3eqn31} with Mathematica.
		\end{proof}
%Analogous to the proof of lemma \ref{lem3} leads us to the following lemma \ref{lem4}. 
\begin{lemma}\label{lem4}
	For $n \geq 2$, 
	\begin{equation}\label{lem3eqn3}
		s_-(n)-\dfrac{110}{n^5}<\sqrt[n-1]{\dfrac{\overline{p}(n-1)}{\overline{p}(n)}}<s_-(n)+\dfrac{200}{n^5}
	\end{equation}
	where $s_-(n):=\displaystyle\sum_{i=0}^{9}\dfrac{b_i}{\sqrt{n}^i}$ with $b_0=1$ and
	\begin{equation*}
	\begin{split}
	&b_1=b_2=0, b_3=-\dfrac{\pi}{2}, b_4=1, b_5=-\dfrac{4+5\pi^2}{8\pi}, b_6=-\dfrac{4+12\pi^2+\pi^4}{8\pi^2}, b_7=-\dfrac{8+14\pi^2+19\pi^4}{16\pi^3},\\
	&b_8=-\dfrac{24+48\pi^2-124\pi^4-15\pi^6}{48\pi^4}, b_9=-\dfrac{192+432\pi^2+552\pi^4+807\pi^6+8\pi^8}{384\pi^5}.
	\end{split}
	\end{equation*}
\end{lemma}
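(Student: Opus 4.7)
My plan is to follow the same strategy used in the proof of Lemma \ref{lem3}, with the ratio $\sqrt[n+1]{\overline{p}(n+1)/\overline{p}(n)}$ consistently replaced by its reverse counterpart $\sqrt[n-1]{\overline{p}(n-1)/\overline{p}(n)}$. First I invoke the analogue of \cite[eqn.~(2.15)]{GZZ} with $m=10$, valid past some explicit threshold $n_0$, to sandwich $\sqrt[n-1]{\overline{p}(n-1)/\overline{p}(n)}$ between $(\widehat{T}(n-1)/\widehat{T}(n))^{1/(n-1)}$ times correction factors of the form $(1 \pm c \cdot 2^{10}/\widehat{\mu}(n)^{10})^{1/(n-1)}$ for small explicit $c$. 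Since $\widehat{\mu}(n)^{10} = \pi^{10} n^{5}$, these correction factors contribute only $O(n^{-5})$, reducing the problem to Taylor expanding $(\widehat{T}(n-1)/\widehat{T}(n))^{1/(n-1)}$ through order $n^{-9/2}$ with a controlled two-sided $O(n^{-5})$ remainder.

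For the Taylor expansion I use the closed form $\widehat{T}(n) = (1 - 1/\widehat{\mu}(n))\, e^{\widehat{\mu}(n)}/(8n)$ to decompose
\begin{equation*}
\Biggl(\frac{\widehat{T}(n-1)}{\widehat{T}(n)}\Biggr)^{\frac{1}{n-1}} = \Biggl(\frac{\widehat{\mu}(n-1)-1}{\widehat{\mu}(n)-1}\Biggr)^{\frac{1}{n-1}} \cdot \Biggl(\frac{\widehat{\mu}(n)}{\widehat{\mu}(n-1)}\Biggr)^{\frac{3}{n-1}} \cdot e^{\frac{\widehat{\mu}(n-1)-\widehat{\mu}(n)}{n-1}},
\end{equation*}
and further split the first factor as $\bigl(1 - 1/n\bigr)^{1/(2(n-1))} \cdot \bigl(1 - 1/\widehat{\mu}(n-1)\bigr)^{1/(n-1)} \cdot \bigl(1 - 1/\widehat{\mu}(n)\bigr)^{-1/(n-1)}$. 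For each of the resulting five pieces I write down symmetric Taylor polynomials $s^{(1)}_-(n)$, $s^{(2,1)}_-(n)$, $s^{(2,2)}_-(n)$, $s^{(2,3)}_-(n)$, $s^{(3)}_-(n)$ of degree $9$ in $1/\sqrt{n}$ with explicit two-sided remainders of the form $\pm c_i/n^5$. The polynomial $s_-(n)$ and its coefficients $b_i$ are then obtained by multiplying out these five truncated expansions; the sign pattern differs from that of $s_+(n)$ essentially because $\widehat{\mu}(n-1) - \widehat{\mu}(n) = -\pi(\sqrt{n}-\sqrt{n-1})$ is the sign-flipped counterpart of $\widehat{\mu}(n+1) - \widehat{\mu}(n)$, and because $(1-1/n)^{1/(2(n-1))}$ replaces $(1 + 1/n)^{1/(2(n+1))}$.

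To finish, I combine the five Taylor bounds, pessimistically estimate the cross-terms under multiplication, apply the sandwich from the first paragraph to absorb the $(1 \pm c\cdot 2^{10}/\widehat{\mu}(n)^{10})^{1/(n-1)}$ corrections, and collect all errors to arrive at the constants $110$ and $200$ asserted in \eqref{lem3eqn3}, valid for all $n \geq n_0$. The finite range $2 \leq n < n_0$ is then verified directly in Mathematica, exactly as in the closing sentence of the proof of Lemma \ref{lem3}. The main obstacle is bookkeeping: each of the five factors contributes nine coefficients of $1/\sqrt{n}^{\,k}$ that must be assembled correctly under the product, and any slackness in the remainder bounds $c_i$ cascades and can easily inflate the final constants $110$ and $200$. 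Calibrating these remainders so as to match the stated constants requires essentially the same interval-style control deployed for Lemma \ref{lem3}, only with the $n-1$ version of every expansion.
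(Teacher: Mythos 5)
Your proposal is correct and is exactly what the paper intends: its entire proof of this lemma is the single line ``Analogous to the proof of Lemma \ref{lem3},'' and you have spelled out precisely that analogue — the same sandwich from \cite[eqn.~(2.15)]{GZZ} with $m=10$, the same factorization of $\bigl(\widehat{T}(n-1)/\widehat{T}(n)\bigr)^{1/(n-1)}$ into the three (then five) pieces with $n-1$ in place of $n+1$, termwise Taylor expansion to order $n^{-9/2}$ with explicit $\pm c_i/n^5$ remainders, multiplication and error collection to get the constants $110$ and $200$, and a Mathematica check for the finitely many small $n$. No discrepancy with the paper's approach.
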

\begin{proof}
Analogous to the proof of Lemma \ref{lem3}.
\end{proof}
\begin{lemma}\label{lemnew1}
Let $\widetilde{N}(\alpha)$ be defined as in \eqref{defnew12}. Then for $n \geq \widetilde{N}(\alpha)$,
\begin{equation}\label{lemnew1eqn1}
1+\frac{3 \alpha-2 \alpha \log n}{n^3}-\frac{2^{7 \alpha+14}\log n}{n^5}<\dfrac{n^\frac{2 \alpha}{n}}{(n-1)^{\frac{\alpha}{n-1}}(n+1)^{\frac{\alpha}{n+1}}}<1+\frac{3 \alpha-2 \alpha \log n}{n^3}+\frac{2^{18 \alpha+31}}{n^5}.
\end{equation} 
\end{lemma}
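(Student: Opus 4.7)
The plan is to reduce the estimate to a Taylor expansion of the smooth function $g(x) := \alpha \log x/x$ and then exponentiate with explicit error control. Set
\[
L(n) := \log \dfrac{n^{2\alpha/n}}{(n-1)^{\alpha/(n-1)}(n+1)^{\alpha/(n+1)}} = 2 g(n) - g(n-1) - g(n+1).
\]
Direct differentiation yields $g''(x) = \alpha(2 \log x - 3)/x^3$ and $g^{(4)}(x) = \alpha(24 \log x - 50)/x^5$. Applying Taylor's theorem of order four at $x = n$, with Lagrange remainder at intermediate points $\xi_{\pm} \in (n-1, n+1)$, and observing that the odd-order contributions cancel by symmetry, one obtains
\[
L(n) = -g''(n) - \tfrac{1}{24}\bigl(g^{(4)}(\xi_{+}) + g^{(4)}(\xi_{-})\bigr) = \dfrac{3\alpha - 2\alpha \log n}{n^3} + R(n),
\]
which already isolates the leading term appearing in \eqref{lemnew1eqn1}.

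Next I would bound $R(n)$ and then the exponential deviation. For $n \geq 2$ one has $\xi_{\pm} \geq n - 1 \geq n/2$, so $|g^{(4)}(\xi_{\pm})| \leq 32 \alpha (24 \log(n+1) + 50)/n^5$, producing an explicit linear-in-$\alpha$ estimate $|R(n)| \leq c_1(\alpha) \log n / n^5$. Writing $e^{L(n)} = 1 + L(n) + E(n)$ with $|E(n)| \leq L(n)^2 e^{|L(n)|}$, the dominant part of $L(n)^2$ is $(3\alpha - 2\alpha \log n)^2/n^6 \leq 4\alpha^2 (\log n)^2/n^6$; as soon as $(\log n)^2 \leq n/\alpha^2$ holds (a condition the cutoffs in $\widetilde{N}(\alpha)$ enforce), this is $O(\alpha^2/n^5)$. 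Combining the main term with $R(n)$ and $E(n)$ yields an equality of the shape $1 + \tfrac{3\alpha - 2\alpha \log n}{n^3} + O_{\alpha}(1/n^5)$ with two-sided, signed error control, from which both inequalities of \eqref{lemnew1eqn1} follow.

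The main obstacle is not the asymptotic structure but the careful bookkeeping required to match the precise coefficients $2^{7\alpha+14}$ and $2^{18\alpha+31}$. The three thresholds $\widetilde{N_1}(\alpha), \widetilde{N_2}(\alpha), \widetilde{N_3}(\alpha)$ defined in \eqref{defnew9}--\eqref{defnew11} are each calibrated to absorb a distinct source of error: respectively, the remainder $R(n)$ from the fourth-order Taylor expansion, the quadratic contribution $L(n)^2/2$ from exponentiation, and the crossover condition $\log n \leq n^{1/k}$ needed to convert $\alpha^2 (\log n)^2/n^6$ into a clean $O(1/n^5)$ bound. The exponential-in-$\alpha$ form of the final constants is produced by systematically dominating polynomial-in-$\alpha$ prefactors by a single exponential, so that the statement holds uniformly over all $\alpha \geq 0$; the principal computational burden is simply verifying that each absorption step is in force on the union of the three thresholds.
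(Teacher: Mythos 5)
Your route is genuinely different from the paper's, and it is viable. The paper factors the quotient as $X_{1,\alpha}(n)\,X_{2,\alpha}(n)\,X_{3,\alpha}(n)$ with $X_{1,\alpha}(n)=(1-1/n)^{-\alpha/(n-1)}$, $X_{2,\alpha}(n)=(1+1/n)^{-\alpha/(n+1)}$ and $X_{3,\alpha}(n)=n^{2\alpha/n-\alpha/(n-1)-\alpha/(n+1)}$, Taylor-expands the logarithm of each factor separately, exponentiates, and multiplies the three two-sided bounds; the thresholds $\widetilde{N_1}(\alpha),\widetilde{N_2}(\alpha),\widetilde{N_3}(\alpha)$ arise precisely as the cutoffs needed for $X_{1,\alpha}$, $X_{2,\alpha}$, $X_{3,\alpha}$ respectively, so your reading of what each threshold "absorbs" is not how they occur in the paper, though this is immaterial since you only need the inequality for $n\geq\widetilde{N}(\alpha)$. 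Your observation that the logarithm of the quotient is the second central difference $2g(n)-g(n-1)-g(n+1)$ of the single function $g(x)=\alpha\log x/x$, handled by one fourth-order Taylor expansion with Lagrange remainder, is cleaner and produces the main term $(3\alpha-2\alpha\log n)/n^{3}=-g''(n)$ in one stroke; the price is that all the $\alpha$-dependent bookkeeping is concentrated in two error terms instead of being spread over three factors.

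One step must be done more carefully than your sketch indicates. The conclusion cannot be "of the shape $1+(3\alpha-2\alpha\log n)/n^{3}+O_{\alpha}(1/n^{5})$", because the Lagrange remainder is genuinely of order $\alpha\log n/n^{5}$: since $g^{(4)}(x)=\alpha(24\log x-50)/x^{5}$, one has $R(n)\approx-2\alpha\log n/n^{5}$. If you use only the absolute bound $|R(n)|\leq c_{1}(\alpha)\log n/n^{5}$, the upper inequality of \eqref{lemnew1eqn1}, whose error term $2^{18\alpha+31}/n^{5}$ carries no $\log n$, fails for all sufficiently large $n$, since you would need $c_{1}(\alpha)\log n\leq 2^{18\alpha+31}$ and no cutoff from below can guarantee that. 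The fix is exactly the "signed" control you allude to but never pin down: for $n\geq 9$ one has $24\log\xi_{\pm}-50>0$, hence $R(n)<0$, so the remainder only pushes the quotient downward; the upper error then comes solely from the exponentiation term $E(n)=O\bigl(\alpha^{2}(\log n)^{2}/n^{6}\bigr)\leq O(\alpha^{2}/n^{5})$, using $(\log n)^{2}\leq n$ for all $n\geq 1$ (incidentally, your claim that $\widetilde{N}(\alpha)$ enforces $(\log n)^{2}\leq n/\alpha^{2}$ is false, e.g. $\alpha=10^{6}$ and $n=5505$, but it is also unnecessary), while $|R(n)|$ is charged to the lower error, which is exactly why the lemma's lower bound carries the factor $\log n/n^{5}$ and the upper bound does not. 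With that sign observation inserted, the absorption of the polynomial-in-$\alpha$ prefactors into $2^{7\alpha+14}$ and $2^{18\alpha+31}$ via $\alpha\leq 2^{\alpha}$ is routine, and your argument goes through for $n\geq\widetilde{N}(\alpha)$.
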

\begin{proof}
Splitting the quotient given on the left hand side of \eqref{lemnew1eqn1} as follows
\begin{equation}\label{eqna1}
	\dfrac{n^\frac{2 \alpha}{n}}{(n-1)^{\frac{\alpha}{n-1}}(n+1)^{\frac{\alpha}{n+1}}}= \underset{:=X_{1,\alpha}(n)}{\underbrace{\Bigl(1-\frac{1}{n}\Bigr)^{-\frac{\alpha}{n-1}}}} \underset{:=X_{2,\alpha}(n)}{\underbrace{\Bigl(1+\frac{1}{n}\Bigr)^{-\frac{\alpha}{n+1}}}} \underset{:=X_{3,\alpha}(n)}{\underbrace{n^{\frac{2\alpha}{n}-\frac{\alpha}{n+1}-\frac{\alpha}{n-1}}}}.
\end{equation}
First taking the logarithm of $X_{1,\alpha}(n)$ and by Taylor expansion, we have for all $n \geq 3$, 
\begin{equation*}
{\alpha}\Biggl(\frac{1}{n^2}+\frac{3}{2n^3}+\frac{11}{6n^4}\Biggr)<\log \Bigl(X_{1,\alpha}(n)\Bigr)<{\alpha}\Biggl(\frac{1}{n^2}+\frac{3}{2n^3}+\frac{11}{6n^4}+\frac{5}{n^5}\Biggr), 
\end{equation*}
and therefore,
 \begin{equation}\label{eqna2}
e^{{\alpha}\Bigl(\frac{1}{n^2}+\frac{3}{2n^3}+\frac{11}{6n^4}\Bigr)}<	X_{1,\alpha}(n)<e^{{\alpha}\Bigl(\frac{1}{n^2}+\frac{3}{2n^3}+\frac{11}{6n^4}+\frac{5}{n^5}\Bigr)}.
\end{equation}
By Taylor expansion of $e^{\frac{\alpha}{n^2}}$, for all $n \geq \lceil\sqrt{2\alpha}\rceil$,
\begin{equation}\label{eqna3}
	1+\frac{\alpha}{n^2}+\frac{\alpha^2}{2n^4}<e^{\frac{\alpha}{n^2}}<1+\frac{\alpha}{n^2}+\frac{\alpha^2}{2n^4}+\frac{2 \alpha^3}{n^5},
\end{equation}
since $\displaystyle\sum_{k=3}^{\infty} \frac{1}{k!}\Bigl(\frac{\alpha}{n^2}\Bigr)^{k}<\frac{\alpha^3}{n^5}\Bigl(1-\frac{\alpha}{n^2}\Bigr)^{-1}<\frac{2\alpha^3}{n^5}$ for all $n\geq \lceil \sqrt{2\alpha} \rceil$. 
Similarly for the remaining exponential factors of \eqref{eqna2}, for all $n\geq \max\Bigl\{\Bigl\lceil\sqrt[3]{3\alpha}\Bigr\rceil,\Bigl\lceil\sqrt[4]{11\alpha/3}\Bigr\rceil,\Bigl\lceil\sqrt[5]{10\alpha}\Bigr\rceil\Bigr\}$,
\begin{equation}\label{eqna4}
1+\frac{3\alpha}{2n^3}<e^{\frac{3\alpha}{2n^3}}<1+\frac{3\alpha}{2n^3}+\frac{9\alpha^2}{2n^5}, 	1+\frac{11\alpha}{6n^4}<e^{\frac{11\alpha}{6n^4}}<1+\frac{11\alpha}{6n^4}+\frac{121\alpha^2}{18n^5},\ \text{and}\ \ e^{\frac{5\alpha}{n^5}}<1+\frac{10\alpha}{n^5}.
\end{equation}
Using the fact that $\alpha < 2^{\alpha}$ for all $\alpha \in {\mathbb{R}}_{\geq 0}$, from \eqref{eqna3} and \eqref{eqna4}, we have for all $n \geq \widetilde{N_1}(\alpha)$ (cf. see \eqref{defnew9}),
\begin{eqnarray}
	X_{1,\alpha}(n) &<&	\left(1+\frac{\alpha^2}{n^2}+\frac{\alpha^2}{2 n^4}+\frac{2 \alpha^3}{n^5}\right) \left(1+\frac{3 \alpha}{2 n^3}+\frac{9 \alpha^2}{2 n^5}\right) \left(1+\frac{11 \alpha}{6 n^4}+\frac{121 \alpha^2}{18 n^5}\right) \left(1+\frac{10 \alpha}{n^5}\right) \nonumber \\ \label{eqna7}&<& 1+\frac{\alpha}{n^2}+\frac{3\alpha}{2n^3}+\frac{ \alpha(11+3 \alpha)}{6n^4}+\frac{2^{12+8 \alpha}}{n^5},
\end{eqnarray} and for all $n \geq 3$, \begin{eqnarray}
X_{1,\alpha}(n) &>&	\left(1+\frac{\alpha^2}{n^2}+\frac{\alpha^2}{2 n^4}\right) \left(1+\frac{3 \alpha}{2 n^3}\right) \left(1+\frac{11 \alpha}{6 n^4}\right) \nonumber \\ \label{eqna7'}&>& 1+\frac{\alpha}{n^2}+\frac{3\alpha}{2n^3}+\frac{ \alpha(11+3 \alpha)}{6n^4}.
\end{eqnarray}
Similarly, for all $n \geq \widetilde{N_2}(\alpha)$ (cf. see \eqref{defnew10}), we have
\begin{equation}\label{eqna8}
1-\frac{\alpha}{n^2}+\frac{3\alpha}{2n^3}+\frac{ \alpha(-11+3 \alpha)}{6n^4}-\frac{19\cdot 2^{5\alpha+2}}{n^5}<	X_{2,\alpha}(n)<1-\frac{\alpha}{n^2}+\frac{3\alpha}{2n^3}+\frac{ \alpha(-11+3 \alpha)}{6n^4}+\frac{31\cdot 2^{7\alpha+4}}{n^5}.
\end{equation}
For the remaining part $X_{3,\alpha}(n)$ in \eqref{eqna1}, observe that for all $n \geq 2$,
\begin{equation*}
\Bigl(-\frac{2 \alpha}{n^3}-\frac{4 \alpha}{n^5}\Bigr) \log n<	\log \Bigl(X_{3,\alpha}(n)\Bigr) < \Bigl(-\frac{2 \alpha}{n^3}-\frac{2 \alpha}{n^5}\Bigr) \log n,\end{equation*} which implies \begin{equation*} e^{\bigl(-\frac{2 \alpha}{n^3}-\frac{4 \alpha}{n^5}\bigr)\log n}<X_{3,\alpha}(n)<e^{\bigl(-\frac{2 \alpha}{n^3}-\frac{2 \alpha}{n^5}\bigr)\log n}.
\end{equation*}
Consider the following series expansion of the exponential function:
\begin{equation*}
	e^{\bigl(-\frac{2 \alpha}{n^3}-\frac{2 \alpha}{n^5}\bigr) \log n }=1-\frac{2 \alpha \log n}{n^3}-\frac{2 \alpha \log n}{n^5}+\Bigl(\frac{2 \alpha \log n}{n^3}\Bigr)^{2} \frac{(1+\frac{1}{n^2})^2}{2!} +\sum_{k=3}^{\infty} (-1)^k \Bigl(\frac{2 \alpha \log n}{n^3}\Bigr)^{k} \frac{(1+\frac{1}{n^2})^k}{k!}.
\end{equation*}
Since for all $n \geq \max \Bigl\{5505,\Bigl\lceil \Bigl(\frac{4 \alpha}{\alpha^2+1}\Bigr)^{4/3} \Bigr\rceil\Bigr\}$, \begin{equation*}
	-\frac{2 \alpha \log n}{n^5}+\Bigl(\frac{2 \alpha \log n}{n^3}\Bigr)^{2} \frac{(1+\frac{1}{n^2})^2}{2!} < \frac{\alpha^3}{n^5},
\end{equation*}
and
\begin{eqnarray}
	\sum_{k=3}^{\infty} (-1)^k \Bigl(\frac{2 \alpha \log n}{n^3}\Bigr)^{k} \frac{(1+\frac{1}{n^2})^k}{k!} &<& \sum_{k=3}^{\infty} \Bigl(\frac{2 \alpha}{n^{\frac{11}{4}}}\Bigr)^{k} \Bigl(\text{since}\ \  \frac{(1+\frac{1}{n^2})^k}{k!}<1\ \ \text{for all}\ \  n\geq 2, k\geq 2\Bigr)\nonumber \\ &<& \frac{16 \alpha^3}{n^5}\ \ \text{for all}\ \  n\geq \max\Bigl\{5505, \lceil (4 \alpha)^{4/11} \rceil\Bigr\}.\nonumber
\end{eqnarray}
Therefore, combining the above estimations, for all $n \geq \widetilde{N_3}(\alpha)$ (cf. see \eqref{defnew11}), we have 
\begin{equation}\label{eqna9}
	X_{3,\alpha}(n)<1-\frac{2 \alpha \log n}{n^3}+\frac{17 \alpha^3}{n^5}.
\end{equation}
Analogous to the above we have for all $n \geq \max \Bigl\{5505,\lceil (4 \alpha)^{4/11} \rceil\Bigr\}$,
\begin{equation*}
	\mid e^{\alpha (-\frac{2}{n^3}-\frac{4}{n^5})\log n}-\Bigl(1-\frac{2 \alpha}{n^3} \Bigl(1+\frac{2}{n^2}\Bigr)\log n\Bigr)|=\mid \sum_{k=2}^{\infty} \frac{1}{k!} \Bigl(\frac{2 \alpha \log n}{n^3} \Bigl(1+\frac{2}{n^2}\Bigr)\Bigr)^k \mid <\frac{24 {\alpha}^2}{n^5}.
\end{equation*} Thus for all $n \geq \widetilde{N_3}(\alpha)$, \begin{equation}\label{eqna9'}
X_{3,\alpha}(n)>1-\frac{2 \alpha \log n}{n^3}-\frac{4 \alpha \log n+24 \alpha^2}{n^5}.
\end{equation}
Applying \eqref{eqna7}-\eqref{eqna9'}, for all $n \geq \underset{1\leq i \leq 3}{\max}\Bigl\{\widetilde{N_i}(\alpha)\Bigr\}=\widetilde{N}(\alpha)$, it follows that 
\begin{eqnarray}
	\dfrac{n^\frac{2 \alpha}{n}}{(n-1)^{\frac{\alpha}{n-1}}(n+1)^{\frac{\alpha}{n+1}}}&<&  \Biggl(1+\frac{\alpha}{n^2}+\frac{3\alpha}{2n^3}+\frac{ \alpha(11+3 \alpha)}{6n^4}+\frac{2^{12+8\alpha}}{n^5}\Biggr) \Biggl(1-\frac{\alpha}{n^2}+\frac{3\alpha}{2n^3} \nonumber \\ & & +\frac{ \alpha(-11+3 \alpha)}{6n^4}\ +\frac{31\cdot 2^{7\alpha+4}}{n^5}\Biggr) \Biggl(1-\frac{2 \alpha \log n}{n^3}+\frac{17 \alpha^3}{n^5}\Biggr) \nonumber \\ &<& 1+\frac{3 \alpha-2 \alpha \log n}{n^3}+ \frac{2^{18 \alpha+30}}{n^5}+\frac{2^{10 \alpha+17}}{n^5} \nonumber \\ &<& 1+\frac{3 \alpha-2 \alpha \log n}{n^3}+\frac{2^{18 \alpha+31}}{n^5}, \nonumber
\end{eqnarray} and \begin{eqnarray}
\dfrac{n^\frac{2 \alpha}{n}}{(n-1)^{\frac{\alpha}{n-1}}(n+1)^{\frac{\alpha}{n+1}}}&>& 1+\frac{3 \alpha-2 \alpha \log n}{n^3}+ A(n,\alpha)+B(n,\alpha) \nonumber \\ &>&1+\frac{3 \alpha-2 \alpha \log n}{n^3} -\frac{2^{7 a+12}}{n^5}-\frac{2^{5 a+5} \log n}{n^5}   \nonumber \\ &>& 1+\frac{3 \alpha-2 \alpha \log n}{n^3}-\frac{2^{7 \alpha+14}\log n}{n^5}  \Bigl(\text{since}\ \  -(1+\log n)>-4 \log n\ \ \nonumber \\& & \hspace*{6cm}\text{for all}\ \  n\geq 2\Bigr), \nonumber
\end{eqnarray}
where  \begin{eqnarray}
\hspace*{-0.7cm}	A(n,\alpha)&:=&-\frac{4 \left(6 \alpha^2+19\ 2^{5 a}\right)}{n^5}-\frac{17 \alpha^2}{12 n^6}-\frac{\alpha \left(19\ 2^{5 \alpha+3}\right)}{2 n^7}-\frac{\alpha \left(2592 \alpha^2+121 \alpha+513\ 2^{5 \alpha+3}\right)}{36 n^8} \nonumber \\  & &-\frac{\alpha \left(57\ 2^{5 \alpha+2} \alpha+209\ 2^{5 \alpha+2}\right)}{6 n^9}-\frac{108 \alpha^5}{3 n^{12}}-\frac{6 \alpha^3}{9 n^{13}}, \nonumber
\end{eqnarray}
 \begin{eqnarray}
 	B(n,\alpha)&:=& -\frac{4 \alpha \log n}{n^5}-\frac{6 \alpha^2 \alpha \log n}{n^6}-\frac{12 \alpha^2 \log n}{n^8}-\frac{3 \alpha^4 \log n}{n^{10}}-\frac{ \alpha^5 \log n}{2n^{11}}-\frac{6 \alpha^4 \log n}{n^{12}}-\frac{ \alpha^2 \log n}{n^{13}}. \nonumber
 \end{eqnarray}
\end{proof}
	\begin{lemma}\label{lemnew2} For all $n \geq 2$,
		$$L^{+}(n)<\overline{p}(n)^{\frac{2}{n(n-1)(n+1)}}<U^{+}(n),$$ where
		$$\hspace*{-0.4cm}U^{+}(n):= 1+\frac{2 \pi}{n^{5/2}}-\frac{2 \log (8n)}{n^3}-\frac{2}{\pi n^{7/2}}-\frac{1}{\pi^2 n^4}+\frac{6 \pi^4-2}{3 \pi^3 n^{9/2}}+\frac{140}{n^5},$$ and $$L^{+}(n):=1+\frac{2 \pi}{n^{5/2}}-\frac{2 \log (8n)}{n^3}-\frac{2}{\pi n^{7/2}}-\frac{1}{\pi^2 n^4}+\frac{6 \pi^4-2}{3 \pi^3 n^{9/2}}-\frac{80 \log n}{n^5}.$$
	\end{lemma}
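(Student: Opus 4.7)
The plan is to take the logarithm and write
\[
\overline{p}(n)^{\frac{2}{n(n-1)(n+1)}} = \exp\!\left(\frac{2\log\overline{p}(n)}{n(n-1)(n+1)}\right),
\]
so that the problem reduces to (i) producing a sharp asymptotic expansion of $\log\overline{p}(n)$ using the Engel decomposition already recorded in \eqref{eqn1}--\eqref{eqn3}, (ii) multiplying by the geometric expansion of $\tfrac{2}{n(n-1)(n+1)}=\tfrac{2}{n^{3}}\sum_{k\ge 0}n^{-2k}$, and (iii) exponentiating via the Taylor series of $e^{x}$. Concretely, from \eqref{eqn1}--\eqref{eqn3},
\[
\log\overline{p}(n) = \pi\sqrt{n} - \log(8n) + \log\!\Bigl(1-\tfrac{1}{\pi\sqrt{n}}\Bigr) + \log\!\Bigl(1+\tfrac{\widehat{R}(n)}{\widehat{T}(n)}\Bigr),
\]
and I would expand $\log\bigl(1-1/(\pi\sqrt{n})\bigr) = -\sum_{j\ge1}\tfrac{1}{j\pi^{j}n^{j/2}}$, then control the final term using the bound $|R_{2}(n,3)|$ from \eqref{Engel2} to see that $\log\bigl(1+\widehat{R}/\widehat{T}\bigr)$ decays faster than any polynomial in $1/n$, so it is absorbed into the $1/n^{5}$ error.

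Next I would carry out the multiplication and keep track of every contribution to orders $n^{-5/2},n^{-3},n^{-7/2},n^{-4},n^{-9/2}$. The $\pi\sqrt{n}$ term, multiplied by $\tfrac{2}{n^{3}}(1+\tfrac{1}{n^{2}}+\cdots)$, produces $\tfrac{2\pi}{n^{5/2}}+\tfrac{2\pi}{n^{9/2}}+O(n^{-13/2})$; the $-\log(8n)$ term gives $-\tfrac{2\log(8n)}{n^{3}}+O(\log n/n^{5})$; the series $\log(1-1/(\pi\sqrt{n}))$ contributes $-\tfrac{2}{\pi n^{7/2}}-\tfrac{1}{\pi^{2}n^{4}}-\tfrac{2}{3\pi^{3}n^{9/2}}+O(n^{-5})$. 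Collecting the $n^{-9/2}$ coefficients yields $\tfrac{2\pi - 2/(3\pi^{3})}{1}= \tfrac{6\pi^{4}-2}{3\pi^{3}}$, matching the desired leading part of the linear ($x$) piece in $e^{x}=1+x+\tfrac{x^{2}}{2}+\cdots$.

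Finally I would exponentiate. Since the argument $x:=\tfrac{2\log\overline{p}(n)}{n(n-1)(n+1)}$ satisfies $x=O(n^{-5/2})$, the quadratic correction $x^{2}/2$ is of order $n^{-5}$ and is absorbed into the $\pm O(n^{-5})$ error, as is $e^{x}-(1+x)$. Using $e^{y}\le 1+y+y^{2}$ for $|y|\le 1$ on the upper side and $e^{y}\ge 1+y$ on the lower side, together with the explicit remainder estimates from the truncated logarithmic and geometric series, I would obtain the claimed $+\tfrac{140}{n^{5}}$ and $-\tfrac{80\log n}{n^{5}}$ envelopes for all $n\ge 2$, with any small values of $n$ verified directly by Mathematica (as the author does elsewhere in Section~\ref{sec2}).

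The main obstacle is purely bookkeeping: producing explicit numerical constants ($140$ and $80$) requires simultaneously controlling the tails $\sum_{j\ge 10} j^{-1}\pi^{-j}n^{-j/2}$ of $\log(1-1/(\pi\sqrt n))$, the tails of the geometric series $\sum_{k\ge 2}n^{-2k}$, the Engel remainder $|R_{2}(n,3)|$ through the $\log(1+\widehat{R}/\widehat{T})$ term, and the quadratic/cubic corrections to $e^{x}$, then choosing the constants large enough to dominate all of them uniformly for $n\ge 2$ (with a finite Mathematica check completing small cases). This is mechanical but delicate, and is the step most likely to force either a slight increase of the constants or a slight increase of the lower bound on $n$.
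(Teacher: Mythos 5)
Your proposal is correct and follows essentially the same route as the paper: the paper likewise expands $\overline{p}(n)$ around $\frac{e^{\widehat{\mu}(n)}}{8n}\bigl(1-\frac{1}{\widehat{\mu}(n)}\bigr)$ (quoting the two-sided bound of \cite[Lemma 2.1]{M2} rather than invoking Engel's remainder directly), bounds everything raised to the power $\frac{2}{n(n-1)(n+1)}$ via logarithms and Taylor expansions with explicit $O(n^{-5})$-type errors valid for large $n$, and finishes the small cases ($2\le n\le 687$) numerically with Mathematica. The only difference is packaging: you exponentiate a single global logarithm of $\overline{p}(n)$, while the paper bounds the three factors $e^{\widehat{\mu}(n)}$, $(8n)^{-1}$, and $1-\widehat{\mu}(n)^{-1}\pm\widehat{\mu}(n)^{-6}$ each raised to that power separately and then multiplies the bounds.
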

	\begin{proof}
		From \cite[Lemma 2.1]{M2}, for all $n \geq 94$, we have 
		\begin{equation}\label{lem4eqn1}
			\frac{e^{\widehat{\mu}(n)}}{8n} \Biggl(1-\frac{1}{\widehat{\mu}(n)}-\frac{1}{\widehat{\mu}(n)^6}\Biggr)<\overline{p}(n)<\frac{e^{\widehat{\mu}(n)}}{8n} \Biggl(1-\frac{1}{\widehat{\mu}(n)}+\frac{1}{\widehat{\mu}(n)^6}\Biggr).
		\end{equation}
		In order to give an upper bound of ${\overline{p}(n)^{\frac{2}{n(n-1)(n+1)}}}$, we shall estimate an upper bound for each of the three factors, namely, $e^{\frac{2 \widehat{\mu}(n)}{n(n-1)(n+1)}}$, $(8n)^{-\frac{2}{n(n-1)(n+1)}}$, and $\Bigl(1-\frac{1}{\widehat{\mu}(n)}+\frac{1}{\widehat{\mu}(n)^6}\Bigr)^{\frac{2}{n(n-1)(n+1)}}$.\\
	Note that for all $n \geq 2$,
		\begin{equation*}
		\frac{2 \pi}{n^{5/2}}+\frac{2 \pi}{n^{9/2}}<	\frac{2 \widehat{\mu}(n)}{n(n-1)(n+1)}< \frac{2 \pi}{n^{5/2}}+\frac{2 \pi}{n^{9/2}}+\frac{4 \pi}{n^5},
		\end{equation*}
		which implies that $n \geq 3$,
		\begin{equation}\label{lem4eqn2}
	1+\frac{2 \pi}{n^{5/2}}+\frac{2 \pi}{n^{9/2}}	<	e^{\frac{2 \widehat{\mu}(n)}{n(n-1)(n+1)}}<e^{\frac{2 \pi}{n^{5/2}}+\frac{2 \pi}{n^{9/2}}+\frac{4 \pi}{n^5}}<1+\frac{2 \pi}{n^{5/2}}+\frac{2 \pi}{n^{9/2}}+\frac{125}{n^5}.
		\end{equation}
	Taking logarithm of  $\Bigl(1-\frac{1}{\widehat{\mu}(n)}+\frac{1}{\widehat{\mu}(n)^6}\Bigr)^{\frac{2}{n(n-1)(n+1)}}$ and $\Bigl(1-\frac{1}{\widehat{\mu}(n)}-\frac{1}{\widehat{\mu}(n)^6}\Bigr)^{\frac{2}{n(n-1)(n+1)}}$, and noting that for all $n \geq 2$,
	\begin{equation*}
	\frac{2}{n(n-1)(n+1)} \log \Bigl(1-\frac{1}{\widehat{\mu}(n)}+\frac{1}{\widehat{\mu}(n)^6}\Bigr) < -\frac{2}{\pi n^{7/2}}-\frac{1}{\pi^2 n^4}-\frac{2}{3 \pi^3 n^{9/2}}, 
	\end{equation*}
\begin{equation*}
	\frac{2}{n(n-1)(n+1)} \log \Bigl(1-\frac{1}{\widehat{\mu}(n)}-\frac{1}{\widehat{\mu}(n)^6}\Bigr) > -\frac{2}{\pi n^{7/2}}-\frac{1}{\pi^2 n^4}-\frac{2}{3 \pi^3 n^{9/2}}-\frac{2}{n^5},
\end{equation*} 
	which at once implies that for all $n \geq 2$,
		\begin{equation}\label{lem4eqn3}
			\Bigl(1-\frac{1}{\widehat{\mu}(n)}+\frac{1}{\widehat{\mu}(n)^6}\Bigr)^{\frac{2}{n(n-1)(n+1)}}< 1-\frac{2}{\pi n^{\frac{7}{2}}}-\frac{1}{\pi^2 n^4}-\frac{2}{3 \pi^3 n^{\frac{9}{2}}}+\frac{1}{n^5},
		\end{equation}
	and \begin{equation}\label{lem4eqn3'}
		\Bigl(1-\frac{1}{\widehat{\mu}(n)}-\frac{1}{\widehat{\mu}(n)^6}\Bigr)^{\frac{2}{n(n-1)(n+1)}}> 1-\frac{2}{\pi n^{\frac{7}{2}}}-\frac{1}{\pi^2 n^4}-\frac{2}{3 \pi^3 n^{\frac{9}{2}}}-\frac{5}{n^5}.
	\end{equation}
 Similarly, we obtain for all $n \geq 688$,
		\begin{equation}\label{lem4eqn5}
	1-\frac{2 \log (8n)}{n^3}-\frac{4 \log 8n}{n^5}	<(8n)^{-\frac{2}{n(n-1)(n+1)}}	<1-\frac{2 \log (8n)}{n^3}.
		\end{equation}
	Combining \eqref{lem4eqn2}-\eqref{lem4eqn5}, it is immediate that for all $n \geq 688$,
		\begin{equation}\label{lem4eqn6}
		\hspace*{-0.215cm}	\overline{p}(n)^{\frac{2}{n(n-1)(n+1)}}<1+ \frac{2 \pi}{n^{\frac{5}{2}}}-\frac{2 \log (8n)}{n^3}-\frac{2}{\pi n^{\frac{7}{2}}}-\frac{1}{\pi^2 n^4}+\frac{6 \pi^4-2}{3 \pi^3 n^{\frac{9}{2}}}+\frac{140}{n^5},
		\end{equation}
	\begin{equation}\label{lem4eqn6'}
		\hspace*{0.3cm}\overline{p}(n)^{\frac{2}{n(n-1)(n+1)}}>1+ \frac{2 \pi}{n^{\frac{5}{2}}}-\frac{2 \log (8n)}{n^3}-\frac{2}{\pi n^{\frac{7}{2}}}-\frac{1}{\pi^2 n^4}+\frac{6 \pi^4-2}{3 \pi^3 n^{\frac{9}{2}}}-\frac{80 \log n}{n^5},
	\end{equation}
	To conclude the proof, it remains to check \eqref{lem4eqn6'} for $2\leq n \leq 687$ which we did numerically with Mathematica.
	\end{proof}

	\begin{theorem}\label{newtheorem}
	Let $\widetilde{N}(\alpha)$ be as in \eqref{defnew12}. Then for all $n \geq \widetilde{N}(\alpha)$,
	\begin{equation*}
	L(n,\alpha)<u_{\alpha}(n)<U(n,\alpha)\ \ \text{and}\ \ L_1(n,\alpha)<u_{\alpha}(n+1) < U_{1}(n,\alpha),
	\end{equation*}
		where \begin{eqnarray}
		\hspace*{0cm}	U(n,\alpha) &:=&1+\frac{3 \pi}{4 n^{5/2}}+\frac{3+3 \alpha-2 \log 8-2(1+\alpha)\log n}{n^3}-\frac{15}{4 \pi n^{7/2}}-\frac{3}{\pi^2 n^4}+\frac{35(-16+3 \pi^4)}{192 \pi^3 n^{9/2}}\nonumber \\ & & +\frac{6 \alpha+2^{12}+2^{31+18 \alpha}+(2+17 \alpha)\log n}{n^5}, \nonumber
		\end{eqnarray}
\begin{eqnarray}
	\hspace*{0cm}	L(n,\alpha) &:=&1+\frac{3 \pi}{4 n^{5/2}}+\frac{3+3 \alpha-2 \log 8-2(1+\alpha)\log n}{n^3}-\frac{15}{4 \pi n^{7/2}}-\frac{3}{\pi^2 n^4}+\frac{35(-16+3 \pi^4)}{192 \pi^3 n^{9/2}}\nonumber \\ & & -\frac{258+(80+2^{16+7 \alpha}) \log n}{n^5}, \nonumber
\end{eqnarray}
	\begin{eqnarray}
	\hspace*{0cm}U_{1}(n,\alpha)&:=&1+\frac{3 \pi}{4 n^{5/2}}+\frac{3+3 \alpha-2 \log 8-2(1+\alpha)\log n}{n^3}-\frac{15(2+\pi^2)}{8 \pi n^{7/2}}+\frac{1}{\pi^2 n^4}\Bigl(-3-11\pi^2\nonumber \\ & &-11\alpha \pi^2 +6 \pi^2 \log 8+(6+6 \alpha)\pi^2 \log n\Bigr)+\frac{35(21 \pi^4+72 \pi^2-16)}{192 \pi^3 n^{9/2}} +\frac{1}{n^5}\Bigl(12\nonumber \\ & &+ (2^{12}+26)\pi^2 -12 \pi^2 \log 8+32 \alpha \pi^2+\pi^2 2^{31+18 \alpha}+(2+17 \alpha) \pi^2 \log n\Bigr),\nonumber
\end{eqnarray}
\begin{eqnarray}
	\hspace*{0cm}L_{1}(n,\alpha)&:=&1+\frac{3 \pi}{4 n^{5/2}}+\frac{3+3 \alpha-2 \log 8-2(1+\alpha)\log n}{n^3}-\frac{15(2+\pi^2)}{8 \pi n^{7/2}}+\frac{1}{\pi^2 n^4}\Bigl(-3-11\pi^2\nonumber \\ & &-11\alpha \pi^2 +6 \pi^2 \log 8+(6+6 \alpha)\pi^2 \log n\Bigr)+\frac{35(21 \pi^4+72 \pi^2-16)}{192 \pi^3 n^{9/2}} -\frac{1}{n^5}\Bigl(256\nonumber \\ & &+ \frac{105 \left(9 \pi ^2 \left(4+\pi ^2\right)-16\right)}{128 \pi ^3}+8 \left(3 \alpha+2^{7 \alpha+13}+13\right) \log n \Bigr).\nonumber
\end{eqnarray}
	\end{theorem}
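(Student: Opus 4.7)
The plan is to write $u_\alpha(n)$ as a product of four factors, each covered by one of the preceding lemmas, multiply the resulting two-sided bounds, and collect terms up to order $n^{-9/2}$ with a clean $n^{-5}$ remainder. For the companion bound on $u_\alpha(n+1)$ I would not re-run the argument, but instead shift $n\mapsto n+1$ in the bound already established for $u_\alpha(n)$ and Taylor expand $(n+1)^{-k/2}$ and $\log(n+1)$ at $n$. The starting identity is
\[
u_\alpha(n)=\Bigl(\tfrac{\overline{p}(n+1)}{\overline{p}(n)}\Bigr)^{1/(n+1)}\cdot\Bigl(\tfrac{\overline{p}(n-1)}{\overline{p}(n)}\Bigr)^{1/(n-1)}\cdot \overline{p}(n)^{\frac{2}{n(n-1)(n+1)}}\cdot \frac{n^{2\alpha/n}}{(n-1)^{\alpha/(n-1)}(n+1)^{\alpha/(n+1)}},
\]
which one checks by verifying that the combined exponent of $\overline{p}(n)$ in the first three factors is exactly $-2/n$ (matching the $\overline{p}(n)^{-2/n}$ inside $r_\alpha(n)^{-2}$), and that the $n^{\alpha}$-type contributions gather cleanly into the fourth factor.

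I then apply Lemma \ref{lem3}, \ref{lem4}, \ref{lemnew2}, and \ref{lemnew1} to the first, second, third, and fourth factor respectively. Each of the four resulting two-sided bounds has the form ``polynomial in $n^{-1/2}$ (with $\log n$ entries appearing in the third and fourth) plus explicit $O(n^{-5})$ error'' and is valid for $n\ge\widetilde N(\alpha)$. Multiplying these four bounds, one observes that every approximating polynomial has leading term $1$ and the next nontrivial power is $n^{-5/2}$; hence any cross-product of two subleading terms is already at order $n^{-5}$ or smaller. The explicit part of the product through order $n^{-9/2}$ is therefore exactly the sum of the four subleading polynomials, and the $n^{-5}$ remainder is a sum of pairwise (and higher) cross-products that can be handled term-by-term by the triangle inequality. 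The $\alpha$-exponential constants $2^{18\alpha+31}$ and $2^{7\alpha+14}\log n$ from Lemma \ref{lemnew1} dominate the remainder and propagate directly into the $2^{31+18\alpha}$ and $2^{16+7\alpha}\log n$ appearing in $U(n,\alpha)$ and $L(n,\alpha)$.

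For $u_\alpha(n+1)$ I apply the bound above with $n$ replaced by $n+1$ (valid since $n\ge\widetilde N(\alpha)$ forces $n+1\ge\widetilde N(\alpha)$), then re-express everything at $n$ via
\[
(n+1)^{-k/2}=n^{-k/2}\sum_{j\ge 0}\binom{-k/2}{j}n^{-j},\qquad \log(n+1)=\log n+\sum_{j\ge 1}\frac{(-1)^{j-1}}{j\,n^{j}},
\]
keeping everything through order $n^{-9/2}$ and absorbing the tail into an enlarged $n^{-5}$ remainder. A useful sanity check is that the $n^{-7/2}$ coefficient must receive a correction of $-\tfrac{5}{2}\cdot\tfrac{3\pi}{4}=-\tfrac{15\pi}{8}$ from shifting the leading $n^{-5/2}$ term, producing $-\tfrac{15}{4\pi}-\tfrac{15\pi}{8}=-\tfrac{15(2+\pi^{2})}{8\pi}$, which is exactly the coefficient displayed in $U_1(n,\alpha)$.

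The main obstacle is combinatorial rather than conceptual: one must manage a large number of cross terms from a fourfold product of nontrivial Taylor polynomials while keeping every coefficient explicit, and ensure that the various $\alpha$-dependent contributions to the $n^{-5}$ remainder collapse into the clean $2^{31+18\alpha}$-type constants stated in the theorem. The $\log n$ dependence of the remainder, arising both from the $\log(n+1)-\log n$ expansion and from the $\log n$ already present in the $n^{-3}$ coefficients of Lemmas \ref{lemnew1} and \ref{lemnew2}, requires particular care, since naive bounding can easily inflate the constants.
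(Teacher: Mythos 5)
Your overall route---writing $u_{\alpha}(n)$ as the product of the four factors bounded by Lemmas \ref{lem3}, \ref{lem4}, \ref{lemnew2} and \ref{lemnew1}, multiplying the four two-sided bounds, and then obtaining the bounds for $u_{\alpha}(n+1)$ by replacing $n$ with $n+1$ and re-expanding $(n+1)^{-k/2}$ and $\log(n+1)$ at $n$---is exactly the (omitted, ``routine'') proof of the paper, and your factorization identity and the $-\tfrac{15\pi}{8}$ consistency check for the shifted bound are correct. The genuine gap is in the central bookkeeping claim: it is not true that ``every approximating polynomial has leading term $1$ and the next nontrivial power is $n^{-5/2}$.'' The polynomials $s_+(n)$ and $s_-(n)$ of Lemmas \ref{lem3} and \ref{lem4} begin with $\pm\frac{\pi}{2}n^{-3/2}$ and $\mp n^{-2}$ (the coefficients $a_3,a_4,b_3,b_4$), so cross-products of subleading terms enter already at orders $n^{-3}$, $n^{-7/2}$, $n^{-4}$ and $n^{-9/2}$, i.e.\ inside the range you intend to keep explicit, and your conclusion that ``the explicit part of the product through order $n^{-9/2}$ is exactly the sum of the four subleading polynomials'' is false; carried out verbatim, it produces coefficients that do not match $U(n,\alpha)$ and $L(n,\alpha)$.

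Concretely, at order $n^{-7/2}$ the sum of the individual coefficients is $a_7+b_7-\frac{2}{\pi}=-\frac{15}{4\pi}-\pi$, and only after adding the cross term $a_3b_4+a_4b_3=\pi$ does one recover the coefficient $-\frac{15}{4\pi}$ of $U(n,\alpha)$; at order $n^{-4}$ the single terms give $-\frac{3}{\pi^2}+\frac{3}{2}$ and the cross contribution $a_3b_5+a_5b_3+a_4b_4=-\frac{3}{2}$ is needed to reach $-\frac{3}{\pi^2}$; likewise the $n^{-3}$ and $n^{-9/2}$ coefficients receive contributions from $a_3b_3$ and from products of the $n^{-3/2}$ terms with the $2\pi n^{-5/2}$ term of Lemma \ref{lemnew2} and the $n^{-3}$ term of Lemma \ref{lemnew1}. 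What is true is only that in the combined product $s_+(n)s_-(n)$ the $n^{-3/2}$ and $n^{-2}$ terms cancel, so the full product does start at $1+\frac{3\pi}{4}n^{-5/2}$; but its explicit coefficients through $n^{-9/2}$ involve the products $a_ib_j$, not merely the sums $a_i+b_i$. So the multiplication must be carried out in full, retaining all cross terms up to order $n^{-9/2}$, before the rest is absorbed into the $n^{-5}$ error; with that correction the remainder of your plan (the error bookkeeping leading to the $2^{31+18\alpha}$- and $2^{16+7\alpha}\log n$-type constants, and the shift argument for $u_{\alpha}(n+1)$) goes through as in the paper.
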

\begin{proof}
Applying Lemmas \ref{lem3}-\ref{lemnew2}, can deduce that for all $n\geq \widetilde{N}(\alpha)$, $L(n,\alpha)<u_{\alpha}(n)<U(n,\alpha)$, and applying this inequality, further it can be shown that $L_1(n,\alpha)<L(n+1,\alpha)<u_{\alpha}(n+1)<U(n+1,\alpha)<U_1(n,\alpha)$ utilizing the following two inequalities
\begin{equation*}
-\frac{\log n}{n^3}+\frac{3 \log n-1}{n^4}+\frac{1-12 \log n}{n^5}<	-\frac{\log (n+1)}{(n+1)^3}<-\frac{\log n}{n^3}+\frac{3 \log n-1}{n^4}+\frac{4}{n^5}
\end{equation*} and 
\begin{equation*}
	-\frac{\log n}{n^5}<\frac{\log (n+1)}{(n+1)^5}<\frac{\log n}{n^5}.
\end{equation*} Due to its routine verification, we omit the proof here.
\end{proof}
Before we move on to prove Theorem \ref{mainresult3}, we refine the upper bounds (resp. lower bounds) $U(n,\alpha)$ and $U_1(n,\alpha)$ (resp. for $L(n,\alpha)$ and $L_1(n,\alpha)$). First, we need to refine the constants depending on $n$ that are coefficients of $\frac{1}{n^5}$ in $U(n,\alpha), U_1(n,\alpha), L(n,\alpha)$ and $L_1(n,\alpha)$ by using the following estimation.
\begin{lemma}\label{newlemma1}
For $(a, b)\in \mathbb{R}\times \mathbb{R}_{>1}$, for all $n\ge \max \left\{\left\lceil e^{\frac{a}{b}}\right\rceil, \left\lceil (2b)^{19}\right\rceil \right\}$,
$$a+b\log(n)<n^{\frac{1}{4}}.$$	
\end{lemma}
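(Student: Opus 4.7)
The plan is to handle the two lower bounds on $n$ one at a time: first use $n\geq \lceil e^{a/b}\rceil$ to absorb the parameter $a$ into $b\log n$, and then verify the resulting one-parameter inequality on $[\lceil (2b)^{19}\rceil,\infty)$ by a monotonicity-plus-boundary argument.

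\textbf{Reduction.} From $n\geq e^{a/b}$ and $b>1>0$ we obtain $b\log n\geq a$, so
$$a+b\log n\leq 2b\log n.$$
Hence it suffices to prove that $2b\log n<n^{1/4}$ for every $n\geq\lceil(2b)^{19}\rceil$.

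\textbf{Monotonicity.} Set $g(n):=n^{1/4}-2b\log n$. A direct computation gives
$$g'(n)=\frac{n^{1/4}-8b}{4n},$$
so $g'(n)>0$ precisely when $n>(8b)^4$. Because
$$\frac{(2b)^{19}}{(8b)^4}=2^{7}b^{15}=128\,b^{15}>1\qquad\text{for }b>1,$$
the starting cutoff $(2b)^{19}$ already dominates $(8b)^4$, and therefore $g$ is strictly increasing on $[(2b)^{19},\infty)$. Consequently, the proof is reduced to checking the single inequality $g\bigl((2b)^{19}\bigr)>0$.

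\textbf{Boundary check.} Substituting $n=(2b)^{19}$, the condition $g(n)>0$ becomes $(2b)^{19/4}>38\,b\log(2b)$, equivalently $2^{19/4}b^{15/4}>38\log(2b)$. With $t:=\log(2b)\geq\log 2$ this reduces to the one-variable inequality
$$\psi(t):=e^{15t/4}-19t>0\qquad\text{for all }t\geq\log 2.$$
Since $\psi'(t)=\tfrac{15}{4}e^{15t/4}-19$ is positive whenever $t>\tfrac{4}{15}\log(76/15)\approx 0.43$, the function $\psi$ is increasing on $[\log 2,\infty)$; and a direct numerical evaluation gives $\psi(\log 2)=2^{15/4}-19\log 2\approx 0.28>0$, which closes the argument.

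\textbf{Main obstacle.} The delicate step is the boundary check: the gap $\psi(\log 2)\approx 0.28$ is extremely tight and is precisely what forces the specific exponent $19$ in the hypothesis; any reduction of that exponent would already fail for $b$ near $1$. A cruder estimate such as $\log n\leq 8n^{1/8}$ would only yield the much weaker cutoff $n\geq(16b)^8$, so this sharp monotonicity-and-boundary analysis cannot be avoided. The rest of the argument consists of routine log/derivative manipulations.
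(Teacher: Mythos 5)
Your proof is correct and follows essentially the same route as the paper: the same reduction $a+b\log n\le 2b\log n$ for $n\ge e^{a/b}$, followed by a monotonicity argument plus a boundary check at $n=(2b)^{19}$ that boils down to a one-variable inequality in $t=\log(2b)$ (your $e^{15t/4}>19t$ is just the exponentiated form of the paper's $15\log(2b)>4\log(19\log(2b))$), settled by a derivative computation and evaluation at $t=\log 2$. The only cosmetic difference is that you differentiate $n^{1/4}-2b\log n$ directly (threshold $(8b)^4$, absorbed by $(2b)^{19}$), whereas the paper first takes logarithms and works with the $b$-free function $\log n-4\log\log n$.
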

\begin{proof}
To begin with, we see that $a+b\log (n)=b\log (n)\left(1+\frac{a}{b\log (n)}\right)$ and for all $n\ge \left\lceil e^{\frac{a}{b}}\right\rceil$, $\frac{a}{b\log (n)}<1$ and therefore, it follows that $a+b\log (n)<2b\log(n)$. Now it remains to show that $2b\log(n)<n^{\frac 14}$ which is equivalent to $4\log(2b)<\log(n)-4\log\log(n)$. Define $f(x):=\log(x)-4\log\log(x)$ and observe that for all $x\ge e^4$, $f(x)$ is increasing and so is $f(n)$ for $n\ge \lceil e^4\rceil$. What remains to prove $4\log(2b)<\log(n)-4\log\log(n)$ is to take an appropriate choice of $n=n_0(>e^4)$ so that $f(n_0)>4\log(2b)$ and since $f$ is increasing, we can conclude that $f(n)>4\log(2b)$. So, choosing $n_0=(2b)^{19}$ (and note that $n_0>e^4$ as $b>1$), we now have to prove
$$f(n_0)=19\log(2b)-4\log(19\log(2b))>4\log(2b)\iff 15\log(2b)>4\log(19\log(2b)).$$
Setting $g(y):=15y-4\log (19y)$, it is clear that $g(y)$ is increasing for $y>\frac{4}{15}$ and taking $y_0=\log(2)$, we have $g(\log(2))>0$. So for all $y>y_0$, $g(y)>0$; in particular choose $y=\log(2b)$ to conclude that $g(\log(2b))>0$. Hence, for all $n\ge \left\lceil (2b)^{19}\right\rceil$, $f(n)\ge f(n_0)>4\log (2b)$. This concludes the proof. 
\end{proof}
Applying Lemma \ref{newlemma1} with $(a,b)=\left(6\alpha+2^{12}+2^{31+18\alpha},2+17\alpha\right)$, for all
$$n\ge \max\left\{ \left\lceil e^{\frac{6\alpha+2^{12}+2^{31+18\alpha}}{2+17\alpha}}\right\rceil, \left\lceil (4+34\alpha)^{19}\right\rceil   \right\}=:n_u(\alpha),$$
we have
 $$\frac{6 \alpha+2^{12}+2^{31+18 \alpha}+(2+17 \alpha)\log n}{n^5}<\frac{1}{n^{\frac{19}{4}}}.$$
Consequently, it follows that for $n\ge n_u(\alpha)$,
\begin{equation}\label{Gneweqn1}
U(n,\alpha)<1+\frac{1}{n^{\frac 52}}\sum_{k=0}^{4}\frac{c_k^{[1]}(\alpha,n)}{\sqrt{n}^k}+\frac{1}{n^{\frac{19}{4}}}:=U^*(n,\alpha),
\end{equation}
with
\begin{align}\nonumber
&c_0^{[1]}(\alpha,n)=\frac{3\pi}{4},\  c_1^{[1]}(\alpha,n)=3+3\alpha-2\log 8-2(1+\alpha)\log n,\  c_2^{[1]}(\alpha,n)=-\frac{15}{4\pi},\\\label{Gnewcoeff1}  &c_3^{[1]}(\alpha,n)=-\frac{3}{\pi^2},\ c_4^{[1]}(\alpha,n)=\frac{35(-16+3\pi^4)}{192\pi^3}.
\end{align}
Similarly, applying Lemma \ref{newlemma1} with $(a,b)=\left(258,80+2^{16+7\alpha}\right)$, we have for all
$$n\ge \max\left\{ \left\lceil e^{\frac{258}{80+2^{16+7\alpha}}}\right\rceil, \left\lceil (160+2^{17+7\alpha})^{19}\right\rceil   \right\}=:n_l(\alpha),\  \frac{258+(80+2^{16+7\alpha})\log n}{n^5}<\frac{1}{n^{\frac{19}{4}}}.$$
Consequently, it follows that for $n\ge n_l(\alpha)$,
\begin{equation}\label{Gneweqn2}
L(n,\alpha)>1+\frac{1}{n^{\frac 52}}\sum_{k=0}^{4}\frac{c_k^{[1]}(\alpha,n)}{\sqrt{n}^k}-\frac{1}{n^{\frac{19}{4}}}:=L^*(n,\alpha).
\end{equation}
Next, applying Lemma \ref{newlemma1} with $$(a,b)=\left(12+4122\pi^2-12\pi^2\log 8+32\alpha\pi^2+2^{31+18\alpha}\pi^2,(2+17\alpha)\pi^2\right),$$ for all 
$$n\ge \max\left\{ \left\lceil e^{\frac{12+4122\pi^2-12\pi^2\log 8+32\alpha\pi^2+2^{31+18\alpha}\pi^2}{\pi^2(2+17\alpha)}}\right\rceil, \left\lceil \pi^{38}(4+34\alpha)^{19}\right\rceil   \right\}=:n_{u_1}(\alpha),$$
it follows that
$$\frac{12+4122\pi^2-12\pi^2\log 8+32\alpha\pi^2+2^{31+18\alpha}\pi^2+\pi^2(2+17 \alpha)\log n}{n^5}<\frac{1}{n^{\frac{19}{4}}}.$$
Consequently, for $n\ge n_{u_1}(\alpha)$,
\begin{equation}\label{Gneweqn3}
U_1(n,\alpha)<1+\frac{1}{n^{\frac 52}}\sum_{k=0}^{4}\frac{c_k^{[2]}(\alpha,n)}{\sqrt{n}^k}+\frac{1}{n^{\frac{19}{4}}}:=U_1^*(n,\alpha),
\end{equation}
with
\begin{align}\nonumber
&c_0^{[2]}(\alpha,n)=c_0^{[1]}(\alpha,n),\  c_1^{[2]}(\alpha,n)=c_1^{[1]}(\alpha,n),\  c_2^{[2]}(\alpha,n)=-\frac{15(2+\pi^2)}{8\pi},\\\label{Gnewcoeff2} &c_3^{[2]}(\alpha,n)=-\frac{3+11\pi^2+11\alpha\pi^2-6\pi^2\log 8-(6+6\alpha)\pi^2\log n}{\pi^2},\ c_4^{[2]}(\alpha,n)=\frac{35(21\pi^4+72\pi^2-16)}{192\pi^3}.
\end{align}
Finally, applying Lemma \ref{newlemma1} with $(a,b)=\left(256+\frac{105 \left(9 \pi ^2 \left(4+\pi ^2\right)-16\right)}{128 \pi ^3}, 8(3\alpha+2^{13+7\alpha}+13)\right)$, for all
$$n\ge \max\left\{ \left\lceil e^{\frac{256+\frac{105 \left(9 \pi ^2 \left(4+\pi ^2\right)-16\right)}{128 \pi ^3}}{8(3\alpha+2^{13+7\alpha}+13)}}\right\rceil, \left\lceil 16^{19}(3\alpha+2^{13+7\alpha}+13)^{19}\right\rceil   \right\}=:n_{l_1}(\alpha),$$
it follows that
$$ \frac{256+\frac{105 \left(9 \pi ^2 \left(4+\pi ^2\right)-16\right)}{128 \pi ^3}+8(3\alpha+2^{13+7\alpha}+13)\log n}{n^5}<\frac{1}{n^{\frac{19}{4}}}.$$
Consequently, for $n\ge n_{l_1}(\alpha)$,
\begin{equation}\label{Gneweqn4}
L(n,\alpha)>1+\frac{1}{n^{\frac 52}}\sum_{k=0}^{4}\frac{c_k^{[2]}(\alpha,n)}{\sqrt{n}^k}-\frac{1}{n^{\frac{19}{4}}}:=L_1^*(n,\alpha).
\end{equation}
Define
\begin{equation}\label{Gnewdef}
N_1(\alpha):=\max\{n_u(\alpha), n_l(\alpha), n_{u_1}(\alpha), n_{l_1}(\alpha)\}.
\end{equation}
Now, we compute upper bounds for $\underset{0\le k\le 4}{\max}\{|c_k^{[1]}(\alpha,n)|\}$ and $\underset{0\le k\le 4}{\max}\{|c_k^{[2]}(\alpha,n)|\}$ which will be used later in Subsection \ref{prof2}. The following lemma is a prerequisite for the estimations to be done in this context.
\begin{lemma}\label{newlemma2}
	For $(a, b)\in \mathbb{R}\times \mathbb{R}_{>1}$, for all $n\ge \max \left\{\left\lceil e^{\frac{a}{b}}\right\rceil, \left\lceil (2b)^{227}\right\rceil \right\}$,
	$$a+b\log(n)<n^{\frac{1}{16}}.$$	
\end{lemma}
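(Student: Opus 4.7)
The plan is to mimic the proof strategy used for Lemma \ref{newlemma1} almost verbatim, with $\frac{1}{4}$ replaced by $\frac{1}{16}$ throughout; the exponent $19$ that appeared in the polynomial threshold of Lemma \ref{newlemma1} will need to grow to $227$ to absorb the four-fold-heavier logarithmic penalty. First, exactly as in Lemma \ref{newlemma1}, the identity $a+b\log(n)=b\log(n)\bigl(1+\frac{a}{b\log(n)}\bigr)$ combined with $n\ge \lceil e^{a/b}\rceil$ yields $\frac{a}{b\log(n)}<1$, so $a+b\log(n)<2b\log(n)$. It therefore suffices to prove $2b\log(n)<n^{1/16}$ for $n\ge \lceil (2b)^{227}\rceil$, which, after taking logarithms, becomes the inequality
\[
16\log(2b)<\log(n)-16\log\log(n).
\]

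Next, I would introduce $f(x):=\log(x)-16\log\log(x)$ and compute $f'(x)=\frac{\log(x)-16}{x\log(x)}$ to see that $f$ is strictly increasing on $[e^{16},\infty)$. Since $b>1$ forces $(2b)^{227}>2^{227}>e^{16}$, monotonicity reduces the required inequality to checking $f\bigl((2b)^{227}\bigr)>16\log(2b)$ at the chosen threshold $n_0=(2b)^{227}$. Expanding,
\[
f(n_0)=227\log(2b)-16\log\bigl(227\log(2b)\bigr),
\]
so the task is to verify $g(y):=211\,y-16\log(227\,y)>0$ at $y=\log(2b)$.

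Finally, since $g'(y)=211-\frac{16}{y}>0$ for $y>\frac{16}{211}$ and $\log 2>\frac{16}{211}$, the function $g$ is strictly increasing on $[\log 2,\infty)$; hence it suffices to verify the single inequality $g(\log 2)>0$, which holds with wide margin because $211\log 2\approx 146$ while $16\log(227\log 2)\approx 81$. As $b>1$ implies $\log(2b)>\log 2$, the chain closes and Lemma \ref{newlemma2} follows.

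The only nontrivial point is the numerical choice of the exponent $227$: one needs a constant $K$ large enough that the leading term $(K-16)\log(2b)$ in the reduced inequality $g(\log(2b))>0$ dominates the correction $16\log(K\log(2b))$ at the worst-case value $b\to 1^{+}$ (equivalently $y\to \log 2$). A short search shows $K=227$ works comfortably, mirroring the role played by $K=19$ in Lemma \ref{newlemma1}; this is the main ``obstacle'', but it is purely arithmetic.
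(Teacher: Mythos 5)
Your proposal is correct and is exactly the adaptation the paper has in mind: the paper omits the proof of this lemma, stating it is analogous to Lemma \ref{newlemma1}, and your argument reproduces that proof with $\frac14$ replaced by $\frac1{16}$, the auxiliary functions $f(x)=\log x-16\log\log x$ and $g(y)=211y-16\log(227y)$, and the threshold exponent $19$ replaced by $227$, all checked correctly (including $2^{227}>e^{16}$ and $g(\log 2)>0$).
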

\begin{proof}
We omit the proof due to its similarity with the proof of Lemma \ref{newlemma1}.
\end{proof}

Following \eqref{Gnewcoeff1}, by numercial computations with Mathematica, it follows that
\begin{equation}\label{Gnewcoeff1estim1}
\underset{0\le k\ne 1\le 4}{\max}\{|c_k^{[1]}(\alpha,n)|\}<2.4,
\end{equation}
and for $k=1$,
\begin{align}\nonumber
\left|c_1^{[1]}(\alpha,n)\right|&=\left|3+3\alpha-2\log 8-2(1+\alpha)\log n\right|\\\nonumber
&=2(1+\alpha)\log n-(3+3\alpha-2\log 8)\\\nonumber
&\hspace{2 cm} \left(\text{as for}\ n\ge \left\lceil e^{\frac{3+3\alpha-2\log 8}{2+2\alpha}}\right\rceil,\ 2(1+\alpha)\log n \ge 3+3\alpha-2\log 8 \right)\\\label{Gnewcoeff1estim2}
&<n^{\frac{1}{16}},
\end{align}
where in the last line we used Lemma \ref{newlemma2} with $(a,b)=(2\log8-3-3\alpha, 2+2\alpha)$, we get for all $n\ge \max \left\{\left\lceil e^{\frac{2\log 8-3-3\alpha}{2+2\alpha}}\right\rceil, \left\lceil (4+4\alpha)^{227}\right\rceil \right\}$, $2(1+\alpha)\log n-(3+3\alpha-2\log 8)<n^{\frac{1}{16}}$.

Combining \eqref{Gnewcoeff1estim1} and \eqref{Gnewcoeff1estim2}, for all
\begin{align*}
n&\ge \max\left\{ \left\lceil e^{\frac{3+3\alpha-2\log 8}{2+2\alpha}}\right\rceil, \left\lceil e^{\frac{2\log 8-3-3\alpha}{2+2\alpha}}\right\rceil, \left\lceil (4+4\alpha)^{227}\right\rceil,(2.4)^{16}\right\}\\
&=\max\left\{ \left\lceil e^{\frac{3+3\alpha-2\log 8}{2+2\alpha}}\right\rceil, \left\lceil e^{\frac{2\log 8-3-3\alpha}{2+2\alpha}}\right\rceil, \left\lceil (4+4\alpha)^{227}\right\rceil\right\}\ \left(\text{as}\ (4+4\alpha)^{227}>4^{227}>(2.4)^{16}\right)\\
&:=N_2^{[1]}(\alpha),
\end{align*}
we get
\begin{equation}\label{Gnewcoeff1estimfinal}
\underset{0\le k\le 4}{\max}\{|c_k^{[1]}(\alpha,n)|\}<n^{\frac{1}{16}}.
\end{equation}

Similarly, following \eqref{Gnewcoeff2}, by numercial computations with Mathematica, it follows that
\begin{equation}\label{Gnewcoeff2estim1}
\underset{k\in\{0,2,4\}}{\max}\{|c_k^{[2]}(\alpha,n)|\}<16.2.
\end{equation}
Since $c_1^{[2]}(\alpha,n)=c_1^{[1]}(\alpha,n)$, from \eqref{Gnewcoeff1estim2}, for all $$n\ge \max\left\{ \left\lceil e^{\frac{3+3\alpha-2\log 8}{2+2\alpha}}\right\rceil, \left\lceil e^{\frac{2\log 8-3-3\alpha}{2+2\alpha}}\right\rceil, \left\lceil (4+4\alpha)^{227}\right\rceil \right\},$$
it follows that
\begin{equation}\label{Gnewcoeff2estim2}
\left|c_1^{[2]}(\alpha,n)\right|<n^{\frac{1}{16}}.
\end{equation}
Finally for $k=3$,
\begin{align}\nonumber
\left|c_3^{[2]}(\alpha,n)\right|&=\left|\frac{3+11\pi^2+11\alpha\pi^2-6\pi^2\log 8-(6+6\alpha)\pi^2\log n}{\pi^2}\right|\\\nonumber
&=6(1+\alpha)\log n-\left(\frac{3}{\pi^2}+11(1+\alpha)-6\log 8\right)\\\nonumber
&\hspace{2 cm} \left(\text{as for}\ n\ge \left\lceil e^{\frac{\frac{3}{\pi^2}+11(1+\alpha)-6\log 8}{6+6\alpha}}\right\rceil,\ 6(1+\alpha)\log n \ge \frac{3}{\pi^2}+11(1+\alpha)-6\log 8 \right)\\\label{Gnewcoeff2estim3}
&<n^{\frac{1}{16}},
\end{align}
where in the last line we used Lemma \ref{newlemma2} with $(a,b)=(6\log 8-11(1+\alpha)-\frac{3}{\pi^2}, 6+6\alpha)$, we get for all $n\ge \max \left\{\left\lceil e^{\frac{6\log 8-11(1+\alpha)-\frac{3}{\pi^2}}{6+6\alpha}}\right\rceil, \left\lceil (12+12\alpha)^{227}\right\rceil \right\}$, $6(1+\alpha)\log n-\left(\frac{3}{\pi^2}+11(1+\alpha)-6\log 8\right)<n^{\frac{1}{16}}$.

Therefore, from \eqref{Gnewcoeff2estim1}-\eqref{Gnewcoeff2estim3}, for all
\begin{align*}
n&\ge \max\left\{ \left\lceil e^{\frac{3+3\alpha-2\log 8}{2+2\alpha}}\right\rceil, \left\lceil e^{\frac{2\log 8-3-3\alpha}{2+2\alpha}}\right\rceil, \left\lceil e^{\frac{\frac{3}{\pi^2}+11(1+\alpha)-6\log 8}{6+6\alpha}}\right\rceil, \left\lceil e^{\frac{6\log 8-11(1+\alpha)-\frac{3}{\pi^2}}{6+6\alpha}}\right\rceil, \left\lceil (12+12\alpha)^{227}\right\rceil \right\}\\
&=:N_2^{[2]}(\alpha),
\end{align*}
it follows that
\begin{equation}\label{Gnewcoeff2estimfinal}
\underset{0\le k\le 4}{\max}\{|c_k^{[2]}(\alpha,n)|\}<n^{\frac{1}{16}}.
\end{equation}

Finally from \eqref{Gnewcoeff1estimfinal} and \eqref{Gnewcoeff2estimfinal}, for all $n\ge \max\{N_2^{[1]}(\alpha), N_2^{[2]}(\alpha)\}=:N_2(\alpha)$, we get 
\begin{equation}\label{Gnewcoeffestimfinal}
\underset{0\le k\le 4}{\max}\{|c_k^{[1]}(\alpha,n)|,|c_k^{[2]}(\alpha,n)|\}<n^{\frac{1}{16}}.
\end{equation}

Now we present the proof of Theorem \ref{mainresult3} using \eqref{Gneweqn1}-\eqref{Gneweqn4}.
\subsection{Proof of Theorem \ref{mainresult3}}\label{prof2}
	Rewrite \eqref{mainresult3eqn} in terms of $u_{\alpha}(n)$ as follows
	\begin{equation}\label{maineqn}
		4(1-u_{\alpha}(n))(1-u_{\alpha}(n+1))<(1-u_{\alpha}(n) u_{\alpha}(n+1))^2.
	\end{equation}
From Theorem \ref{newtheorem}, it is clear that $u_{\alpha}(n)>1$ and so is $u_{\alpha}(n+1)$. Therefore, in order to prove \eqref{maineqn}, it is equivalent to show that
\begin{equation*}
4(u_{\alpha}(n)-1)(u_{\alpha}(n+1)-1)<(u_{\alpha}(n) u_{\alpha}(n+1)-1)^2.
\end{equation*}
From Theorem \ref{newtheorem} and \eqref{Gneweqn1}-\eqref{Gneweqn4},  we know that for all $n\ge \max\{\widetilde{N}(\alpha), N_1(\alpha)\}$,
$$4(u_{\alpha}(n)-1)(u_{\alpha}(n+1)-1)<4(U(n,\alpha)-1)(U_1(n,\alpha)-1)<4(U^*(n,\alpha)-1)(U_1^*(n,\alpha)-1)$$
and 
$$(u_{\alpha}(n) u_{\alpha}(n+1)-1)^2>(L(n,\alpha)L_1(n,\alpha)-1)^2>(L^*(n,\alpha)L_1^*(n,\alpha)-1)^2.$$
Therefore to prove \eqref{maineqn}, it is sufficient to prove that
\begin{equation}\label{Gneweqn5}
4(U^*(n,\alpha)-1)(U_1^*(n,\alpha)-1)<(L^*(n,\alpha)L_1^*(n,\alpha)-1)^2.
\end{equation}
Following \eqref{Gneweqn1} and \eqref{Gneweqn3}, it follows that
\begingroup
\allowdisplaybreaks
\begin{align}\nonumber
&(U^*(n,\alpha)-1)(U_1^*(n,\alpha)-1)\\\nonumber
&=\left(\frac{1}{n^{\frac 52}}\sum_{k=0}^{4}\frac{c_k^{[1]}(\alpha,n)}{\sqrt{n}^k}+\frac{1}{n^{\frac{19}{4}}}\right)\left(\frac{1}{n^{\frac 52}}\sum_{k=0}^{4}\frac{c_k^{[2]}(\alpha,n)}{\sqrt{n}^k}+\frac{1}{n^{\frac{19}{4}}}\right)\\\nonumber
&=\frac{1}{n^5}\cdot\sum_{k=0}^{4}\frac{c_k^{[1]}(\alpha,n)}{\sqrt{n}^k}\cdot \sum_{k=0}^{4}\frac{c_k^{[2]}(\alpha,n)}{\sqrt{n}^k}+\frac{1}{n^{\frac{29}{4}}}\left(\sum_{k=0}^{4}\frac{c_k^{[1]}(\alpha,n)+c_k^{[2]}(\alpha,n)}{\sqrt{n}^k}\right)+\frac{1}{n^{\frac{19}{2}}}\\\nonumber
&=\frac{1}{n^5}\left(\sum_{k=0}^{4}\sum_{m=0}^{k}\frac{c_m^{[1]}(\alpha,n)\cdot c_{k-m}^{[2]}(\alpha,n)}{\sqrt{n}^k}+\frac{1}{n^{\frac 52}}\sum_{k=0}^{3}\sum_{m=k}^{3}\frac{c_{m+1}^{[1]}(\alpha,n)\cdot c_{4+k-m}^{[2]}(\alpha,n)}{\sqrt{n}^k}\right)\\\nonumber
&\hspace{6.5 cm}+\frac{1}{n^{\frac{29}{4}}}\left(\sum_{k=0}^{4}\frac{c_k^{[1]}(\alpha,n)+c_k^{[2]}(\alpha,n)}{\sqrt{n}^k}\right)+\frac{1}{n^{\frac{19}{2}}}\\\nonumber
&=\frac{1}{n^5}\sum_{k=0}^{4}\sum_{m=0}^{k}\frac{c_m^{[1]}(\alpha,n)\cdot c_{k-m}^{[2]}(\alpha,n)}{\sqrt{n}^k}+\frac{1}{n^{\frac{15}{2}}}\sum_{k=0}^{3}\sum_{m=k}^{3}\frac{c_{m+1}^{[1]}(\alpha,n)\cdot c_{4+k-m}^{[2]}(\alpha,n)}{\sqrt{n}^k}\\\nonumber
&\hspace{6.5 cm}+\frac{1}{n^{\frac{29}{4}}}\left(\sum_{k=0}^{4}\frac{c_k^{[1]}(\alpha,n)+c_k^{[2]}(\alpha,n)}{\sqrt{n}^k}\right)+\frac{1}{n^{\frac{19}{2}}}\\\label{Gneweqn6}
&=:\mathcal{M}_1(n,\alpha)+\mathcal{E}_1^{[1]}(n,\alpha)+\mathcal{E}_1^{[2]}(n,\alpha)+\frac{1}{n^{\frac{19}{2}}}.
\end{align}
\endgroup
First we give an upper bound for $\left|\mathcal{E}_1^{[1]}(n,\alpha)\right|$ as
\begin{align}\nonumber
\left|\mathcal{E}_1^{[1]}(n,\alpha)\right|&\le \frac{1}{n^{\frac{15}{2}}}\sum_{k=0}^{3}\sum_{m=k}^{3}\frac{|c_{m+1}^{[1]}(\alpha,n)|\cdot |c_{4+k-m}^{[2]}(\alpha,n)|}{\sqrt{n}^k}\\\nonumber
&\le \frac{1}{n^{\frac{15}{2}}}\sum_{k=0}^{3}\sum_{m=k}^{3}\frac{n^{\frac 18}}{\sqrt{n}^k}\ \ \left(\text{by}\ \eqref{Gnewcoeffestimfinal}\right)\ \le \frac{1}{n^{\frac{15}{2}}}\sum_{k=0}^{3}\sum_{m=k}^{3}\frac{n^{\frac 14}}{\sqrt{n}^k} \\\label{Gneweqn7}
&\le \frac{1}{n^{\frac{15}{2}}}\sum_{k=0}^{3}\sum_{m=k}^{3}n^{\frac 14}\ \ \left(\text{as}\ n\ge 1\right)=\frac{10}{n^{\frac{29}{4}}}.
\end{align}
Next we estimate an upper bound for $\left|\mathcal{E}_2^{[1]}(n,\alpha)\right|$ as
\begin{align}\nonumber
\left|\mathcal{E}_2^{[1]}(n,\alpha)\right|&\le \frac{1}{n^{\frac{29}{4}}}\left(\sum_{k=0}^{4}\frac{|c_k^{[1]}(\alpha,n)|+|c_k^{[2]}(\alpha,n)|}{\sqrt{n}^k}\right)\\\nonumber
&\le \frac{1}{n^{\frac{29}{4}}}\sum_{k=0}^{4}\frac{2\cdot n^{\frac{1}{16}}}{\sqrt{n}^k}\ \ \left(\text{by}\ \eqref{Gnewcoeffestimfinal}\right)\ \le \frac{2}{n^{\frac{29}{4}}}\sum_{k=0}^{4}n^{\frac 18}\ \ \left(\text{as}\ n\ge 1\right)\\\label{Gneweqn8}
&=\frac{10}{n^{\frac{57}{8}}}.
\end{align}
Applying \eqref{Gneweqn7} and \eqref{Gneweqn8} to \eqref{Gneweqn6}, we get for all $n\ge \max\{N_2(\alpha), 257\}$,
\begin{align}\label{GTfinalupperbound}
4(U^*(n,\alpha)-1)(U_1^*(n,\alpha)-1)< 4\mathcal{M}_1(n,\alpha)+4\left(\frac{10}{n^{\frac{29}{4}}}+\frac{10}{n^{\frac{57}{8}}}+\frac{1}{n^{\frac{19}{2}}}\right)<4\mathcal{M}_1(n,\alpha)+\frac{60}{n^{\frac{57}{8}}},
\end{align}
where in the last inequality we have used the fact that for all $n\ge 257$, $4\left(\frac{10}{n^{\frac{29}{4}}}+\frac{10}{n^{\frac{57}{8}}}+\frac{1}{n^{\frac{19}{2}}}\right)<\frac{60}{n^{\frac{57}{8}}}$.
Following \eqref{Gneweqn2} and \eqref{Gneweqn4}, it follows that
\begingroup
\allowdisplaybreaks
\begin{align}\nonumber
&L^*(n,\alpha)\cdot L_1^*(n,\alpha)-1\\\nonumber
&=\left(1+\frac{1}{n^{\frac 52}}\sum_{k=0}^{4}\frac{c_k^{[1]}(\alpha,n)}{\sqrt{n}^k}-\frac{1}{n^{\frac{19}{4}}}\right)\left(1+\frac{1}{n^{\frac 52}}\sum_{k=0}^{4}\frac{c_k^{[2]}(\alpha,n)}{\sqrt{n}^k}-\frac{1}{n^{\frac{19}{4}}}\right)-1\\\nonumber
&=\frac{1}{n^{\frac 52}}\sum_{k=0}^{4}\frac{c_k^{[1]}(\alpha,n)+c_k^{[2]}(\alpha,n)}{\sqrt{n}^k}+\frac{1}{n^5}\cdot \sum_{k=0}^{4}\frac{c_k^{[1]}(\alpha,n)}{\sqrt{n}^k}\cdot \sum_{k=0}^{4}\frac{c_k^{[1]}(\alpha,n)}{\sqrt{n}^k}\\\nonumber
&\hspace{5.5cm}-\frac{1}{n^{\frac{29}{4}}}\sum_{k=0}^{4}\frac{c_k^{[1]}(\alpha,n)+c_k^{[2]}(\alpha,n)}{\sqrt{n}^k}-\frac{2}{n^{\frac{19}{4}}}+\frac{1}{n^{\frac{19}{2}}}\\\nonumber
&=\frac{1}{n^{\frac 52}}\sum_{k=0}^{4}\frac{c_k^{[1]}(\alpha,n)+c_k^{[2]}(\alpha,n)}{\sqrt{n}^k}+\frac{1}{n^5}\sum_{k=0}^{4}\sum_{m=0}^{k}\frac{c_m^{[1]}(\alpha,n)\cdot c_{k-m}^{[2]}(\alpha,n)}{\sqrt{n}^k}\\\nonumber
&\hspace{1 cm}+\frac{1}{n^{\frac{15}{2}}}\sum_{k=0}^{3}\sum_{m=k}^{3}\frac{c_{m+1}^{[1]}(\alpha,n)\cdot c_{4+k-m}^{[2]}(\alpha,n)}{\sqrt{n}^k}-\frac{1}{n^{\frac{29}{4}}}\sum_{k=0}^{4}\frac{c_k^{[1]}(\alpha,n)+c_k^{[2]}(\alpha,n)}{\sqrt{n}^k}-\frac{2}{n^{\frac{19}{4}}}+\frac{1}{n^{\frac{19}{2}}}\\\label{Gneweqn9}
&=:\mathcal{M}_2^{[1]}(n,\alpha)+\mathcal{E}_1^{[1]}(n,\alpha)-\mathcal{E}_1^{[2]}(n,\alpha)-\frac{2}{n^{\frac{19}{4}}}+\frac{1}{n^{\frac{19}{2}}},
\end{align}
\endgroup
where
\begin{align}\nonumber
\mathcal{M}_2^{[1]}(n,\alpha)&=\frac{1}{n^{\frac 52}}\sum_{k=0}^{4}\frac{c_k^{[1]}(\alpha,n)+c_k^{[2]}(\alpha,n)}{\sqrt{n}^k}+\frac{1}{n^5}\sum_{k=0}^{4}\sum_{m=0}^{k}\frac{c_m^{[1]}(\alpha,n)\cdot c_{k-m}^{[2]}(\alpha,n)}{\sqrt{n}^k}\\\nonumber
&=\frac{1}{n^{\frac 52}}\left(\sum_{k=0}^{4}\frac{c_k^{[1]}(\alpha,n)+c_k^{[2]}(\alpha,n)}{\sqrt{n}^k}+\sum_{k=0}^{4}\sum_{m=0}^{k}\frac{c_m^{[1]}(\alpha,n)\cdot c_{k-m}^{[2]}(\alpha,n)}{\sqrt{n}^{k+5}}\right)\\\nonumber
&=\frac{1}{n^{\frac 52}}\left(\sum_{k=0}^{4}\frac{c_k^{[1]}(\alpha,n)+c_k^{[2]}(\alpha,n)}{\sqrt{n}^k}+\sum_{k=5}^{9}\sum_{m=5}^{k}\frac{c_{m-5}^{[1]}(\alpha,n)\cdot c_{k-m}^{[2]}(\alpha,n)}{\sqrt{n}^{k}}\right)\\\label{Gneweqn10}
&=:\frac{1}{n^{\frac 52}}\sum_{k=0}^{9}\frac{d_k^{[1]}(\alpha,n)}{\sqrt{n}^k},
\end{align}
with
\begin{equation}\label{Gneweqn11}
d_k^{[1]}(\alpha,n)=
\begin{cases}
c_k^{[1]}(\alpha,n)+c_k^{[2]}(\alpha,n), &\quad \text{if}\ 0\le k\le 4,\\
\displaystyle\sum_{m=5}^{k}c_{m-5}^{[1]}(\alpha,n)\cdot c_{k-m}^{[2]}(\alpha,n), &\quad \text{if}\ 5\le k\le 9.
\end{cases}
\end{equation}
Applying \eqref{Gneweqn7} and \eqref{Gneweqn8} to \eqref{Gneweqn10}, we have for all $n\ge \max\{N_2(\alpha), 4\}$,
\begin{align}\label{GTlowerbound}
L^*(n,\alpha)\cdot L_1^*(n,\alpha)-1> \mathcal{M}_2^{[1]}(n,\alpha)-\left(\frac{10}{n^{\frac{29}{4}}}+\frac{10}{n^{\frac{57}{8}}}+\frac{2}{n^{\frac{19}{4}}}-\frac{1}{n^{\frac{19}{2}}}\right)>\mathcal{M}_2^{[1]}(n,\alpha)-\frac{3}{n^{\frac{19}{4}}},
\end{align}
where in the last inequality we have used the fact that for all $n\ge 4$, $\frac{10}{n^{\frac{29}{4}}}+\frac{10}{n^{\frac{57}{8}}}+\frac{2}{n^{\frac{19}{4}}}-\frac{1}{n^{\frac{19}{2}}}<\frac{3}{n^{\frac{19}{4}}}$.

Applying \eqref{Gnewcoeffestimfinal} to \eqref{Gneweqn11}, we get
\begin{equation*}
\underset{0\le k\le 4}{\max}\left\{\left|d_k^{[1]}(\alpha,n)\right|\right\}\le 2\cdot n^{\frac{1}{16}}\ \ \text{and}\ \ \underset{5\le k\le 9}{\max}\left\{\left|d_k^{[1]}(\alpha,n)\right|\right\}\le 5\cdot n^{\frac 18},
\end{equation*}
and so
\begin{equation}\label{Gneweqn12}
\underset{0\le k\le 9}{\max}\left\{\left|d_k^{[1]}(\alpha,n)\right|\right\}\le 5\cdot n^{\frac 18}.
\end{equation}
Now we estimate $\left(\mathcal{M}_2^{[1]}(n,\alpha)-\frac{3}{n^{\frac{19}{4}}}\right)^2$ by splitting it in the following way
\begingroup
\allowdisplaybreaks
\begin{align}\nonumber
&\left(\mathcal{M}_2^{[1]}(n,\alpha)-\frac{3}{n^{\frac{19}{4}}}\right)^2\\\nonumber
&=\left(\mathcal{M}_2^{[1]}(n,\alpha)\right)^2-\frac{6}{n^{\frac{19}{4}}}\cdot \mathcal{M}_2^{[1]}(n,\alpha)+\frac{9}{n^{\frac{19}{2}}}\\\nonumber
&=\frac{1}{n^5}\cdot \sum_{k=0}^{9}\frac{d_k^{[1]}(\alpha,n)}{\sqrt{n}^k}\cdot \sum_{k=0}^{9}\frac{d_k^{[1]}(\alpha,n)}{\sqrt{n}^k}-\frac{6}{n^{\frac{29}{4}}}\cdot\sum_{k=0}^{9}\frac{d_k^{[1]}(\alpha,n)}{\sqrt{n}^k}+\frac{9}{n^{\frac{19}{2}}}\\\nonumber
&=\frac{1}{n^5}\left(\sum_{k=0}^{9}\sum_{m=0}^{k}\frac{d_m^{[1]}(\alpha,n)\cdot d_{k-m}^{[1]}(\alpha,n)}{\sqrt{n}^k}+\frac{1}{n^5}\cdot \sum_{k=0}^{8}\sum_{m=k}^{8}\frac{d_{m+1}^{[1]}(\alpha,n)\cdot d_{9+k-m}^{[1]}(\alpha,n)}{\sqrt{n}^k}\right)\\\nonumber
&\hspace{9 cm}-\frac{6}{n^{\frac{29}{4}}}\cdot\sum_{k=0}^{9}\frac{d_k^{[1]}(\alpha,n)}{\sqrt{n}^k}+\frac{9}{n^{\frac{19}{2}}}\\\nonumber
&=\frac{1}{n^5}\sum_{k=0}^{4}\sum_{m=0}^{k}\frac{d_m^{[1]}(\alpha,n)\cdot d_{k-m}^{[1]}(\alpha,n)}{\sqrt{n}^k}+\frac{1}{n^5}\sum_{k=5}^{9}\sum_{m=0}^{k}\frac{d_m^{[1]}(\alpha,n)\cdot d_{k-m}^{[1]}(\alpha,n)}{\sqrt{n}^k}\\\nonumber
&\hspace{2 cm} +\frac{1}{n^{10}}\cdot \sum_{k=0}^{8}\sum_{m=k}^{8}\frac{d_{m+1}^{[1]}(\alpha,n)\cdot d_{9+k-m}^{[1]}(\alpha,n)}{\sqrt{n}^k}-\frac{6}{n^{\frac{29}{4}}}\cdot\sum_{k=0}^{9}\frac{d_k^{[1]}(\alpha,n)}{\sqrt{n}^k}+\frac{9}{n^{\frac{19}{2}}}\\\label{Gneweqn13}
&=:\mathcal{M}_2(n,\alpha)+\mathcal{E}_2^{[1]}(n,\alpha)+\mathcal{E}_2^{[2]}(n,\alpha)-\mathcal{E}_2^{[3]}(n,\alpha)+\frac{9}{n^{\frac{19}{2}}}.
\end{align}
\endgroup
Estimating an upper bound for $\left|\mathcal{E}_2^{[1]}(n,\alpha)\right|$ as
\begin{align}\nonumber
\left|\mathcal{E}_2^{[1]}(n,\alpha)\right|\le \frac{1}{n^5}\sum_{k=5}^{9}\sum_{m=0}^{k}\frac{\left|d_m^{[1]}(\alpha,n)\right|\cdot \left|d_{k-m}^{[1]}(\alpha,n)\right|}{\sqrt{n}^k}&\le \frac{1}{n^{\frac{15}{2}}}\sum_{k=5}^{9}\sum_{m=0}^{k}\left|d_m^{[1]}(\alpha,n)\right|\cdot \left|d_{k-m}^{[1]}(\alpha,n)\right|\\\nonumber
&\le \frac{25}{n^{\frac{15}{2}}}\sum_{k=5}^{9}\sum_{m=0}^{k}n^{\frac 14}\ \ \left(\text{by}\ \eqref{Gneweqn12}\right)\\\label{Gneweqn14}
&=\frac{1375}{n^{\frac{29}{4}}}.
\end{align} 
Next, we compute an upper bound for $\left|\mathcal{E}_2^{[2]}(n,\alpha)\right|$ as
\begin{align}\nonumber
\left|\mathcal{E}_2^{[2]}(n,\alpha)\right|&\le \frac{1}{n^{10}}\sum_{k=0}^{8}\sum_{m=k}^{8}\frac{\left|d_{m+1}^{[1]}(\alpha,n)\right|\cdot \left|d_{9+k-m}^{[1]}(\alpha,n)\right|}{\sqrt{n}^k}\\\nonumber
&\le \frac{1}{n^{10}}\sum_{k=0}^{8}\sum_{m=k}^{8}\left|d_{m+1}^{[1]}(\alpha,n)\right|\cdot \left|d_{9+k-m}^{[1]}(\alpha,n)\right|\le \frac{25}{n^{10}}\sum_{k=0}^{8}\sum_{m=k}^{8}n^{\frac 14}\ \ \left(\text{by}\ \eqref{Gneweqn12}\right)\\\label{Gneweqn15}
&=\frac{1125}{n^{\frac{39}{4}}}.
\end{align} 
Finally, for $\left|\mathcal{E}_2^{[3]}(n,\alpha)\right|$, we get
\begin{equation}\label{Gneweqn16}
\left|\mathcal{E}_2^{[3]}(n,\alpha)\right|\le \frac{6}{n^{\frac{29}{4}}}\cdot\sum_{k=0}^{9}\frac{\left|d_k^{[1]}(\alpha,n)\right|}{\sqrt{n}^k}\le \frac{30}{n^{\frac{29}{4}}}\cdot\sum_{k=0}^{9}n^{\frac 18}=\frac{300}{n^{\frac{57}{8}}}.
\end{equation}
Applying \eqref{Gneweqn14}-\eqref{Gneweqn16} to \eqref{Gneweqn13}, it follows that for all $n\ge \max\{N_2(\alpha), 761\}$
\begin{equation*}
\left(\mathcal{M}_2^{[1]}(n,\alpha)-\frac{3}{n^{\frac{19}{4}}}\right)^2>\mathcal{M}_2(n,\alpha)-\frac{1375}{n^{\frac{29}{4}}}-\frac{1125}{n^{\frac{39}{4}}}-\frac{300}{n^{\frac{57}{8}}}+\frac{9}{n^{\frac{19}{2}}}>\mathcal{M}_2(n,\alpha)-\frac{900}{n^{\frac{57}{8}}},
\end{equation*}
and consequently from \eqref{GTlowerbound}, we have for all $n \ge \max\{N_2(\alpha), 761\}$,
\begin{align}\label{GTfinallowerbound}
(L^*(n,\alpha)\cdot L_1^*(n,\alpha)-1)^2> \mathcal{M}_2(n,\alpha)-\frac{900}{n^{\frac{57}{8}}}.
\end{align}  
Finally, to prove \eqref{Gneweqn5}, from \eqref{GTfinalupperbound} and \eqref{GTfinallowerbound}, it remains to show that 
\begin{equation}\label{Gneweqn17}
\mathcal{M}_2(n,\alpha)-4\mathcal{M}_1(n,\alpha)>\frac{960}{n^{\frac{57}{8}}}.
\end{equation}
Computing with Mathematica, we have checked that $\mathcal{M}_2(n,\alpha)-4\mathcal{M}_1(n,\alpha)=\frac{225\pi^2}{64n^7}$,
and hence for all $n\ge 343361460986$, \eqref{Gneweqn17} holds. Therefore, for all
\begin{equation}\label{Gfinalcutoff}
n\ge \max\left\{\widetilde{N}(\alpha), N_1(\alpha), N_2(\alpha),343361460986\right\}=:N_T(\alpha),
\end{equation} 
we get \eqref{Gneweqn5} and hence conclude the proof of \eqref{maineqn}.

\section{Conclusion}\label{conclusion}
We conclude this paper by considering the following problem:
\begin{problem}\label{Problem}
Let $\alpha$ be a non-negative real number. Does the sequence $\Bigl\{\sqrt[n]{\overline{p}(n)/n^{\alpha}}\Bigr\}_{n\geq N(\alpha)}$ is infinitely $\log$-convex? Moreover, if it is infinitely $\log$-convex, then what about the growth of $N(\alpha)$?	
\end{problem}
\begin{center}
	\textbf{Acknowledgements}
\end{center}
 Part of this work has been carried out in National Institute of Science Education and Research. The author would like to thank the institute for its hospitality and support. The author expresses her sincere gratitude to Prof. Brundaban Sahu for his valuable suggestions on improving the manuscript of the paper.\\
 \textbf{Data availability statement}: We hereby confirm that Data sharing not applicable to this article as no datasets were generated or analyzed during the current study.

	\end{document}